\newcommand{\Q}{\mathbb{Q}}
\newcommand{\R}{\mathbb{R}}
\newcommand{\Z}{\mathbb{Z}}
\newcommand{\nl}{\medskip\noindent}
\newcommand \str [1] {$\langle  {#1} \rangle$}
\newcommand \Str [1] {\text{Str}{#1}}
\newcommand \Strem [1] {\text{\em Str}{#1}}
\newcommand\Ls{$\langle  \R, +,< ,\Z\rangle$}
\newcommand\Ss{$\langle  \R, +,<,1 \rangle$}
\newcommand\Xs[1]{$\langle  \R, +,<,1,#1 \rangle$}
\newcommand\LXs[1]{$\langle  \R, +,<,\Z,  #1 \rangle$}
\newcommand\SXs[1]{$\langle  \R, +,<,1,  #1 \rangle$}
\newcommand{\Sx}{{\mathcal{S}_X}}
\newcommand{\Tx}{{\mathcal{T}_X}}
\begin{document}

\setcounter{page}{15}
\publyear{22}
\papernumber{2140}
\volume{188}
\issue{1}

   \finalVersionForARXIV

\title{Decidability of Definability Issues in the Theory of Real Addition}

\author{Alexis B\`{e}s\thanks{Address for correspondence: Univ. Paris Est Creteil, LACL, F-94010 Creteil, France. \newline \newline
                    \vspace*{-6mm}{\scriptsize{Received July 2022; \ accepted   November  2022.}}}
\\
Univ. Paris Est Creteil, LACL, F-94010 Creteil, France \\
bes@u-pec.fr
\and Christian Choffrut\\
IRIF (UMR 8243), CNRS and Universit\'e Paris 7 Denis Diderot, France\\
Christian.Choffrut@irif.fr
}

\maketitle

\runninghead{A. B\`{e}s and Ch. Choffrut}{Decidability of Definability Issues in the Theory of Real Addition}

\begin{abstract}
Given a subset of $X\subseteq \R^{n}$  we can associate  with every point  $x\in \R^{n}$ a vector space $V$ of maximal dimension with the property that for some ball centered at $x$, the subset $X$ coincides inside the ball with a union of lines parallel to $V$. A point is singular if $V$ has dimension $0$.

In an earlier paper we proved that a \Ls-definable relation $X$ is    \Ss-definable  if and only if the number of singular points is finite and every rational section of $X$  is \Ss-definable, where a rational section is a set obtained from $X$ by fixing some component to a rational value.

Here we show that we can dispense with the hypothesis of $X$ being \Ls-definable by requiring that the components of the singular points be rational numbers. This provides a topological characterization of first-order definability in the structure \Ss.
It also allows us to deliver a self-definable criterion (in Muchnik's terminology) of \Ss- and \Ls-definability for a wide class of relations, which turns into an effective criterion provided that the corresponding theory is decidable. In particular these results apply to the class of
so-called $k-$recognizable relations  which are defined by finite Muller automata via the representation of the reals in a integer basis $k$,  and allow us to prove that it is decidable whether a $k-$recognizable relation (of any arity) is $l-$recognizable for every base $l \geq 2$.
\end{abstract}

\section{Introduction}

In his seminal work on Presburger Arithmetic \cite{Muchnik03}, Muchnik provides a  characterization of definability of a relation $X \subseteq \Z^n$ in  $\langle \Z,+ , <\rangle$ in terms of sections of $X$ and local periodicity properties of $X$. It also shows that the characterization can be expressed as a $\langle \Z,+, <,X \rangle$-sentence, and thus can be decided if $\langle \Z,+, <, X \rangle$ is decidable. As an application Muchnik proves that it is decidable whether a $k-$recognizable relation $X \subseteq \Z^n$ is $\langle \Z,+,< \rangle$-definable. Recall that given an integer $k \geq 2$, $X$ is $k-$recognizable if it is recognizable by some finite automaton whose inputs are the base-$k$ encoding of integers (see \cite{BHMV94}).

The present paper continues the line of research started in \cite{BC2020}, which aims to extend Muchnik's results and techniques to the case of reals with addition. Consider the structure  \Ss\     of the additive ordered group of reals along with the
constant $1$.  It is well-known that the subgroup $\Z$ of integers  is not first-order-definable in this structure. Let \Ls\ denote the expansion of \Ss\ with the unary predicate ``$x\in \Z$''. In \cite{BC2020} we prove a topological characterization of \Ss-definable relations in the family of \Ls-definable relations, and use it to derive, on the one hand, that it is decidable whether or not a relation on the reals definable in  \Ls\ can be defined in \Ss\  and on the other hand that there is no intermediate structure between \Ls\ and \Ss\ (since then, the latter result has been generalized by Walsberg \cite{Wal20} to a large class of $o-$minimal structures)

We recall the topological characterization of \Ss\ in \Ls,   \cite[Theorem 6.1]{BC2020}. We say that the neighborhood of a point $x\in \R^{n}$
relative to a relation $X\subseteq \R^{n}$   has  a {\it stratum} if there exists a direction such that the intersection of X with any sufficiently small neighborhood around $x$ is the trace of a union of lines parallel to the given direction. When $X$ is \Ss-definable, all points have strata, except finitely many which we call singular. In \cite{BC2020} we give necessary and sufficient conditions for a \Ls-definable relation $X \subseteq \R^n$ to be \Ss-definable, namely {(FSP)}: it has finitely many singular points  and  {(DS)}: all intersections of $X$ with arbitrary hyperplanes parallel to $n-1$ axes and having rational components on the remaining axis are \Ss-definable.
We asked whether it is possible to remove the assumption that the given relation  is \Ls-definable.
In the present paper we prove that the answer is positive if a new  assumption is added, see below.
Let us first explain the structure of the proof in \cite{BC2020}. The necessity of the two conditions (FSP) and (DS)  is easy. The  difficult part  was their sufficiency and it used very specific properties of the \Ls-definable relations, in particular the fact that   \Ss- and
 \Ls-definable relations are  locally indistinguishible. In order to show the existence of
 a \Ss-formula  for $X$ we showed   two intermediate
 properties,    (RB):  for every nonsingular point $x$, there exists a basis of the strata subspace composed of vectors with rational components, and  (FI): there are finitely many ``neighborhood types", i.e., the equivalence relation $x \sim y$ on $\R^n$ which holds
   if there exists $r>0$ such that ($x+w \in X \leftrightarrow y+w \in X$ for every $|w|<r$) has finite index.

 When passing from the characterization of \Ls-definable relations to the characterization of
 general ones the topological characterization uses  the same intermediate
   properties   but  they are much more delicate to establish and  an   extra condition (RSP) is required:  all singular points of $X$ have rational components.
Moreover we show that this characterization is effective under natural conditions. Indeed,
 if every nonempty  \SXs{X}-definable relation contains a point with rational components, then the \Ss-definability of $X$ is
 expressible in the structure \SXs{X} itself.   The crucial point is the notion of \emph{quasi-singular} points
generalizing that of singular points. We were forced to consider this new notion because the \Xs{X}-predicate which
defines singular points in \Ls\ no longer defines them in  general structures.
In so doing  we can turn
 the criterion for \Ss-definability into an effective criterion provided that the theory of \SXs{X} is decidable. More precisely we show that for every
decidable expansion $\+M$ of \Ss\ such that every nonempty  $\+M$-definable relation contains a point with rational components, one can decide
whether or not a given  $\+M$-definable relation is \Ss-definable.

We extend the result of \Ss-definability of a general relation to that of
 \Ls-definability. Every relation
on the reals can be uniquely decomposed into  some relations on the integers  and some relations on the unit hypercubes (\cite{BFL08}, see also \cite{FV59}). This decomposition yields
a simple characterization of the \Ls-definable relations, which is expressible in \linebreak \LXs{X} provided that all  nonempty \LXs{X}-definable relations
 contain a point with rational components.
Combining the result on \Ss-definability for the reals and Muchnik's result on $\langle  \Z, +,<,1 \rangle$-definable integer relations
 we show that for every decidable expansion $\+N$ of \Ls\ such that every nonempty
$\+N$-definable relation contains a point with rational components, one can decide whether or not a given $\+N$-definable relation is \Ls-definable.

 We also study a particularly significant case.
	The notion of $k$-recognizability for relations on integers can be extended to the case of relations on reals, by considering Muller automata which read infinite words encoding reals written in base $k$, see \cite[Definition 1]{BRW1998}.
 The class of $k-$recognizable
 relations coincides with the class of relations definable in some expansion of  $\langle  \R, +,< ,\Z\rangle$  of the form  $\langle \R,\Z,+,<,X_k \rangle$ where $X_k$ is a base
 dependent ternary predicate \cite[section 3]{BRW1998}.  This expansion satisfies the above required condition since it has a decidable theory and every nonempty
 $k-$recognizable relation contains a point with rational components.
The  \Ls-definable relations define a subclass which has a very specific relevance since it coincides with the class of relations which are
 $k-$recognizable for every $k \geq 2$ \cite{BB2009,BBB2010,BBL09}. A consequence of our result is  that given a $k-$recognizable relation
 it can be decided if it is
 $\ell-$recognizable for all bases $\ell \geq 2$. This falls into the more general issue of  finding effective characterizations of subclasses of $k-$reco\-gnizable relations.
A previous result of this type was  proved by Milchior  in \cite{Milchior17} by showing that it is decidable
whether or not a weakly $k-$recognizable subset of $\R$ is definable in \Ss, where  ``weak'' is
defined as a natural condition on the states of a deterministic automaton.

 We give a short outline of our paper. Section \ref{sec:prelim} gathers basic definitions and notation. In Section \ref{sec:useful} we recall the main useful definitions and results from \cite{BC2020} in order to make the paper selfcontained. In Section \ref{sec:main} we show  that the conjunction of conditions (RSP), (RB) and (FI) characterizes the  \Ss-definable relations. In Section \ref{sec:selfdef}  we deal with the self-definable criterion of \Ss-definability. We  introduce the crucial notion of quasi-singular point and show that it is definable in \SXs{X}. We also provide an alternative, inductive,   formulation of \Ss-definability  for $X$: every relation obtained from $X$
by assigning fixed real values to  arbitrary components contains finitely many quasi-singular points. We then show how to extend the results to the case of \Ls. In Section \ref{sec:applications} we show that the self-definable criterion of \Ss-definability (resp. \Ls-definability) of a relation $X \subseteq \R^n$ can be turned into an effective criterion provided that $X$ is definable in a suitable decidable theory, and apply the result to the class of $k$-recognizable relations.

 \subsubsection*{Other related work.}
 Muchnik's approach, namely expressing in the theory of the structure a property of the structure itself, can be used in other settings.
We refer the interested reader to the discussion in  \cite[Section 4.6]{SSV14} and also to \cite{PW00,Bes13,Milchior17} for examples of such structures.
A similar method has already been used in 1966, see \cite[Thm 2.2.]{GS66} where the authors are able to express in Presburger theory whether or not
a Presburger subset is the Parikh image of a  context-free language.

The theory of (expansions of) dense ordered groups has been studied extensively in model theory, in particular in connection with o-minimality, see e.g. \cite{DG17,DMS10}. Let us also mention a recent series of results by Hieronymi which deal with expansions of \Ls, and in particular with the frontier of decidability for such expansions, see, e.g., \cite{Hie19} and its bibliography.

\section{Preliminaries}
\label{sec:prelim}

Throughout this work we assume the vector space $\R^{n}$ is provided with the    metric
$L_{\infty}$ (i.e., $|x|=\max_{1\leq i\leq n} |x_{i}|$).  Let $B(x,r)$ denote the open ball centered at $x\in \R^{n}$ and of radius $r>0$. Given $x,y \in \R^n$ let  $[x,y]$   (resp. $(x,y)$) denote the closed segment (resp. open segment) with extremities $x,y$. We use also notations such as $[x,y)$ or $(x,y]$ for half-open segments.

Let us specify our logical conventions and notations.  We work within first-order predicate calculus with equality.  We identify  formal symbols and their interpretations.
We are mainly concerned with the structures  \Ss\ and  \Ls.
Given a structure $\cal M$ with domain $D$ and $X \subseteq D^n$, we say that $X$ is {\em definable in $\cal M$}, or {\em $\cal M$-definable}, if there exists a formula $\varphi(x_1,\dots,x_n)$ in the signature of $\cal M$ such that $\varphi(a_1,\dots,a_n)$ holds in $\cal M$ if and only if $(a_1,\dots,a_n) \in X$ (this corresponds to the usual notion of {\em definability without parameters}).

The \Ss-theory  admits quantifier elimination in the following sense, which can be interpreted geometrically
as saying that a \Ss-definable relation is a finite union of closed and open polyhedra.

\begin{theorem}{\cite[Thm 1]{FR75}}
\label{th:quantifier-elimination-for-R-plus}
Every formula in  \Ss\  is equivalent to a  finite Boo\-lean combination  of inequalities  between linear combinations  of variables with coefficients in $\Z$ (or, equivalently, in $\Q$).

In particular every nonempty \Ss-definable relation contains a point with rational components.
\end{theorem}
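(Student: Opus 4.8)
The plan is to prove the result by quantifier elimination, carried out in the classical Fourier--Motzkin style, and then to read off the rational-point statement from the resulting normal form. First I would record that in the signature $\{+,<,1\}$ every term is, up to rearrangement, of the shape $\sum_i a_i x_i + c$ with $a_i, c \in \N$, so that by moving all summands to one side every atomic formula $t_1 = t_2$ or $t_1 < t_2$ is equivalent to a comparison $\sum_i c_i x_i \sim c$ with $c_i, c \in \Z$ and $\sim \, \in \{<, =\}$. This is the target class of ``inequalities between linear combinations with integer coefficients'', and the whole point of the argument is to show it is closed under the operations needed to remove quantifiers.

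By the usual reductions it suffices to eliminate a single existential quantifier: put a formula in prenex form, and to remove an innermost $\exists y$ write the matrix in disjunctive normal form and distribute $\exists y$ over the disjunction. After replacing each disequation $t_1 \neq t_2$ by $t_1 < t_2 \vee t_2 < t_1$ and re-forming the DNF, we are left to treat $\exists y\, \psi$ where $\psi$ is a conjunction of literals of the form $\sum_i c_i x_i \sim c$ with $\sim \, \in \{<, \le, =, \ge, >\}$. Separating the literals according to whether they involve $y$, and dividing each $y$-literal by the (nonzero, integer) coefficient of $y$ --- here passing to $\Q$ --- rewrites them as finitely many constraints $y = E_m$, $y > L_j$ and $y < U_k$ (with the $\le,\ge$ variants), where $E_m, L_j, U_k$ are linear combinations of the remaining variables with rational coefficients, together with a residue $\psi_0$ not mentioning $y$.

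The crux is the elimination itself, and it is exactly here that the order-theoretic properties of $\R$ enter. If some equality $y = E_m$ is present I substitute $E_m$ for $y$ throughout, which removes the quantifier and stays within the class. Otherwise only inequalities remain, and since $\langle \R,< \rangle$ is a dense linear order without endpoints, a witness $y$ lying (strictly or weakly, as prescribed) between all lower bounds and all upper bounds exists if and only if every lower bound lies below every upper bound; thus $\exists y\, \psi$ is equivalent to $\psi_0 \wedge \bigwedge_{j,k}(L_j \prec U_k)$, where $\prec$ is strict unless both of the two bounds are weak, and where the conjunction is vacuously true if lower or upper bounds are absent (using unboundedness). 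The conditions $L_j \prec U_k$ are again linear comparisons with rational coefficients, and clearing denominators restores integer coefficients. I would emphasize that the one thing to verify carefully is precisely this closure: no step introduces a congruence or divisibility condition (as it would over $\Z$) nor any irrational constant, so that the density of the order is exactly what substitutes for the integer-part information that is absent from \Ss.

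For the ``in particular'' clause, let $X$ be nonempty, let $\varphi$ be a quantifier-free defining formula in the above normal form, and pick $a \in X$. Taking the conjunction of literals of a DNF of $\varphi$ that $a$ satisfies, the solution set of that conjunction is the intersection of the affine subspace $A$ cut out by its equalities with the open set $U$ cut out by its strict inequalities. Since $A$ is defined by linear equations with rational coefficients it has a rational particular point and a rational spanning set, so $\Q^n \cap A$ is dense in $A$; as $U \cap A$ is a nonempty (it contains $a$) relatively open subset of $A$, it meets $\Q^n$, yielding a point of $X$ with rational components.
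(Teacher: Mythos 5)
Your proof is correct. The paper does not prove this statement itself but quotes it from Ferrante--Rackoff \cite{FR75}, and your Fourier--Motzkin elimination over the dense unbounded order $\langle \R,<\rangle$ is exactly the standard argument behind that citation; your derivation of the ``in particular'' clause (rational points dense in the affine part, intersected with the relatively open part) is also sound, modulo the cosmetic step of splitting each non-strict inequality satisfied by $a$ into the strict or the equality case before forming $A$ and $U$.
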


\section{Local properties of real relations}
\label{sec:useful}

Most of the definitions and results in this section  are taken from \cite{BC2020}.
{
These are variants of notions and results already known in computational geometry, see e.g. \cite{BN88,BBD12} for the case of  \Ss-definable relations.
}
We only give formal  proofs for the new results. In the whole section we fix $n \geq 1$ and  $X \subseteq \R^n$.

\subsection{Strata}\label{subsection:strata}

The following clearly defines  an equivalence relation.

\begin{definition}
\label{de:same-neighborhood}
Given $x,y \in \R^{n}$ we write  $ x \sim_X y$ or simply $ x\sim y$
when $X$ is understood, if there exists a real $r>0$ such that
the translation $w \mapsto w +y-x$ is a one-to-one mapping from $B(x,r)\cap X$ onto $B(y,r)\cap X$.
\end{definition}

\begin{example}
\label{ex:square}
Let $X$ be a closed subset of the plane delimited by a square. There are ten $\sim_X$-equivalence classes:
the set of points interior to the square, the set of points interior to its complement, the four vertices and the
four open edges.
\end{example}

Let  ${\mathcal Cl}(x)$ denote the $\sim$-equivalence class to which $x$ belongs.

\begin{definition}\label{de:strata}\hfill\break

\vspace*{-8mm}
\begin{enumerate}
\item Given a non-zero vector $v \in \R^n$ and a point $y\in \R^n$, let $L_{v}(y)=\{y+\alpha v \ | \ \alpha \in \R\}$ be
the line passing through $y$ in the direction $v$.
More generally, if $X\subseteq \R^n$ let  $L_{v}(X)$ denote the set $\bigcup_{x\in X} L_{v}(x)$.

\item A non-zero vector $v \in \R^n$  is an  $X$-\emph{stratum} at $x$
(or simply a \emph{stratum}  when $X$ is understood)
if there exists a real $r>0$ such that
\begin{equation}
\label{eq:saturation}
B(x, r)  \cap L_{v}(X \cap  B(x, r)  )   \subseteq  X.
\end{equation}
This can be seen as saying that inside the ball $B(x,r)$, the relation $X$ is a union of lines parallel to $v$.
By convention the zero vector is also considered as a stratum.

\item The set of $X$-strata at $x$ is denoted $\text{Str}_{X}(x)$ or simply $\text{Str}(x)$.
\end{enumerate}
\end{definition}

\begin{proposition}\cite[Proposition 3.4]{BC2020}
\label{pr:strata-subspace}
For every  $x\in \R^{n}$ the set $\Strem(x) $ is  a vector subspace of
$\R^{n}$.
\end{proposition}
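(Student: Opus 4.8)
The plan is to verify the three defining properties of a vector subspace for $\Strem(x)$: that it contains the zero vector, that it is closed under scalar multiplication, and that it is closed under addition. The first holds by the convention in Definition~\ref{de:strata}. For scalar multiplication, the key observation is that $L_{\lambda v}(y)=L_{v}(y)$ for every $y$ whenever $\lambda\neq 0$, since the two expressions describe the same line through $y$; hence $L_{\lambda v}=L_{v}$ as operators on subsets, and the saturation condition \eqref{eq:saturation} for $\lambda v$ is literally the same statement as for $v$, with the same witnessing radius. Thus $\lambda v\in\Strem(x)$ whenever $v\in\Strem(x)$ and $\lambda\neq 0$, while $\lambda=0$ gives the zero vector. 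Both steps are immediate.

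All the difficulty lies in additive closure, so suppose $u,v\in\Strem(x)$ are nonzero. Taking $r>0$ no larger than the two radii provided by \eqref{eq:saturation} (saturation in a ball persists in every smaller concentric ball), I may assume that inside $B(x,r)$ the set $X$ is simultaneously saturated in the directions $u$ and $v$: for every $z\in X\cap B(x,r)$ and every real $t$, if $z+tu\in B(x,r)$ then $z+tu\in X$, and likewise for $v$. I want a radius $r'$ witnessing that $w:=u+v$ is a stratum, i.e.\ that any point $z+tw$ with $z\in X\cap B(x,r')$ and $z+tw\in B(x,r')$ lies in $X$. The tempting one-step route $z\mapsto z+tu\mapsto z+tu+tv=z+tw$ fails, because the intermediate point $z+tu$ may leave $B(x,r)$ even when $z$ and $z+tw$ are both close to $x$ (precisely when $u$ and $v$ nearly cancel). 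This is the main obstacle.

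The remedy is to replace the single jump by a fine staircase and to control it by compactness. Fix $r'<r$. The segment $[z,z+tw]$ is compact and contained in $B(x,r')$, whose closure lies in $B(x,r)$; hence there is $\delta>0$ such that the $\delta$-neighborhood of this segment is contained in $B(x,r)$. Choose $N$ with $\tfrac{|t|}{N}\max(|u|,|v|)<\delta$ and set $z_k=z+\tfrac{k}{N}\,tw$ for $0\le k\le N$, so that $z_0=z$, $z_N=z+tw$, and every $z_k$ lies on the segment, hence in $B(x,r')$. Starting from $z_0\in X$, I argue by induction on $k$: given $z_k\in X\cap B(x,r)$, the point $z_k+\tfrac{t}{N}u$ lies within $\delta$ of $z_k$, hence in $B(x,r)$, so $u$-saturation places it in $X$; then $z_{k+1}=\bigl(z_k+\tfrac{t}{N}u\bigr)+\tfrac{t}{N}v$ lies in $B(x,r)$ (being on the segment) and on the $v$-line through a point of $X\cap B(x,r)$, so $v$-saturation places it in $X$ as well. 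After $N$ steps this gives $z+tw\in X$, which is exactly \eqref{eq:saturation} for $w$ with radius $r'$, and completes the verification that $\Strem(x)$ is a subspace.
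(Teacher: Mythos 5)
Your proof is correct: the reduction to additive closure, the identification of the real obstacle (the intermediate point $z+tu$ escaping the ball when $u$ and $v$ nearly cancel), and the fine staircase controlled by a $\delta$-neighborhood of the segment $[z,z+tw]$ (using convexity of the $L_\infty$-ball) together give a complete argument. The paper does not reproduce a proof here --- it cites \cite[Proposition 3.4]{BC2020} --- and your zigzag subdivision is essentially the standard argument used there, so nothing further is needed.
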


\begin{definition}
\label{de:dimension}
The \emph{dimension} dim$(x)$ of a point $x \in \R^n$ is the dimension of the subspace $\Str(x)$.
We say that $x$ is a $d$-{\em point} if $d=\dim(x)$.
Moreover if $d=0$ then $x$ is said to be $X$-\emph{singular}, or simply \emph{singular}, and
otherwise it is \emph{nonsingular}.
\end{definition}

\begin{example}\label{ex:square2}(Example \ref{ex:square} continued)  Let $x \in \R^2$. If $x$ belongs to the interior of the square or of its complement, then $\Str(x)= \R^2$. If $x$ is one of the four vertices of the square then
we have $\Str(x)=\{0\}$, i.e., $x$ is singular. Finally, if $x$ belongs to an open edge of the square but is not a
vertex, then $\Str(x)$ has dimension 1, and two  points of  opposite edges have the same strata subspace,
 while two points of  adjacent edges  have different strata subspaces.
\end{example}

 It can be  shown that all strata at  $x$ can be defined with respect to a common value $r$ in expression~(\ref{eq:saturation}).

\begin{proposition}  \cite[Proposition 3.9]{BC2020}
\label{pr:uniform-radius}
For every  $x\in \R^{n}$ there exists a real $r>0$ such that for every  $v\in \Strem(x)\setminus \{0\}$
we have
$$
B(x, r)  \cap  L_{v}(X \cap  B(x, r)) \subseteq X.
$$
\end{proposition}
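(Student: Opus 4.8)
The plan is to prove a slightly stronger, direction-free statement from which the proposition follows at once. By Proposition~\ref{pr:strata-subspace} the set $\Str{(x)}$ is a vector subspace; let $d=\dim(x)$ and fix a basis $v_1,\dots,v_d$ of it. I claim it suffices to produce a radius $r>0$ such that
\begin{equation}
\label{eq:plan-invariance}
y\in X\cap B(x,r),\ u\in\Str{(x)},\ y+u\in B(x,r)\ \Longrightarrow\ y+u\in X .
\end{equation}
Indeed, applying \eqref{eq:plan-invariance} with $u=\alpha v$ for $v\in\Str{(x)}\setminus\{0\}$ recovers the inclusion $B(x,r)\cap L_{v}(X\cap B(x,r))\subseteq X$. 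First I would record that the saturation condition is inherited by smaller balls: if \eqref{eq:saturation} holds for some radius it holds for every smaller one, since a point and a line witnessing membership inside the smaller ball witness it inside the larger ball as well. As each $v_i$ is a stratum it has its own radius $r_i$; setting $R=\min_i r_i$ then makes every $v_i$ saturate with the single radius $R$.

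The mechanism for \eqref{eq:plan-invariance} is to travel from $y$ to $y+u$ along a broken path whose legs are parallel to the basis vectors. Writing $u=\sum_{i=1}^{d}c_i v_i$, set $p_0=y$ and $p_k=y+\sum_{i=1}^{k}c_i v_i$, so that $p_d=y+u$. If every vertex $p_k$ lies in $B(x,R)$, a straightforward induction finishes the argument: assuming $p_{k-1}\in X\cap B(x,R)$, the point $p_k=p_{k-1}+c_k v_k$ belongs to $L_{v_k}(X\cap B(x,R))$ and to $B(x,R)$, hence to $X$ by the radius-$R$ saturation of $v_k$; thus $p_k\in X\cap B(x,R)$, and at the last step $p_d=y+u\in X$.

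The crux, and the only real obstacle, is keeping all the vertices $p_k$ inside $B(x,R)$. This is genuinely needed: without shrinking the radius, the set of directions that saturate with a fixed radius need not be closed under addition, precisely because an intermediate vertex $p_k$ can escape the ball even though the endpoints $y,y+u$ (and the straight segment between them) stay in the convex ball $B(x,r)$. Finite dimensionality resolves this. Since reading off coordinates in the fixed basis $v_1,\dots,v_d$ is a bounded linear operation, there is a constant $\kappa$, depending only on the basis, with $\max_i|c_i|\le\kappa\,|u|$; with $C=d\,\kappa\,\max_j|v_j|$ every partial sum obeys $|p_k-x|\le|y-x|+\sum_{i\le k}|c_i|\,|v_i|\le r+C|u|$. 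As $y,y+u\in B(x,r)$ force $|u|<2r$, this is strictly below $(1+2C)r$. Choosing $r=R/(1+2C)$ therefore confines all vertices to $B(x,R)$, and since this $r$ is built from the basis alone and is independent of $u$, it is the uniform radius sought. The step I would be most careful about is this norm-comparison bound, checking that $\kappa$ indeed depends only on the basis and not on the particular $u$, since the whole uniformity claim rests on it.
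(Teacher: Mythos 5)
Your proof is correct. The paper itself only cites \cite[Proposition 3.9]{BC2020} for this statement, and your argument --- fixing a basis $v_1,\dots,v_d$ of $\Str(x)$, taking $R$ to be a common saturation radius for the basis vectors, walking from $y$ to $y+u$ along legs parallel to the $v_i$, and shrinking the radius by the factor $1+2C$ (with $C$ depending only on the basis via the boundedness of the coordinate functionals) so that every intermediate vertex stays inside $B(x,R)$ --- is exactly the standard proof of this uniformization and handles the one genuine difficulty, namely that the broken path may leave the ball, correctly.
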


\begin{definition}
A \emph{$X$-safe radius} (or simply a \emph{safe radius} when $X$ is understood) for $x$ is a real $r>0$ satisfying the condition of Proposition \ref{pr:uniform-radius}.
Clearly if $r$ is safe then so are all $0<s\leq r$. By convention every real is
a safe radius  if   $\Str(x)=\{0\}$.
\end{definition}

\begin{example}(Example \ref{ex:square} continued)  For an element $x$ in the interior of the square
or the interior of its complement a safe radius is  the (minimal) distance from $x$ to the edges of the square.
If $x$ is a  vertex
then $\Str(x)=\{0\}$  and every $r>0$ is safe for $x$. In all other cases $r$ can be chosen as the minimal distance
of $x$ to a vertex.
\end{example}

\begin{remark} \label{re:sim-and-str}
If $x\sim y$ then  $\text{\em Str}(x) =\text{\em Str}(y) $, therefore  given an $\sim$-equivalence class $E$, we may define  $\Str(E)$  as the set of common strata of all $x\in E$.

Observe that the converse is false.
{
	In Example \ref{ex:square} for instance, points in the interior and points in the complement of the interior of the square have the same set of strata, namely $\R^2$, but are not $\sim-$equivalent.
}
\end{remark}

It is possible to  combine the notions of strata and of safe radius.

\begin{lemma}  \cite[Lemma 3.13]{BC2020}
\label{le:strx-subset-stry}
Let  $x\in \R^{n}$  and $r$ be a safe radius for $x$. Then for all $y\in B(x,r)$ we have
$\Strem{(x)}\subseteq \Strem{(y)}$.
\end{lemma}

\begin{example}\label{ex:square3}(Example \ref{ex:square} continued) Consider a point $x$ on an (open) edge
	of the square and a safe radius $r$ for $x$. For every point $y$ in $B(x,r)$ which is not on the edge we have
	$\Str(x)\subsetneq \Str(y)=\R^{2}$. For all other points we have $\Str(x)=\Str(y)$.
\end{example}

Inside a ball whose radius is safe for the center, all points  along a stratum are $\sim$-equivalent.

\begin{lemma}\label{le:tech1}
Let $x$ be non-singular, $v \in \Str(x)\setminus\{0\}$, and $r$ be safe for $x$. For every $z \in B(x,r)$ we have $L_v(z) \cap B(x,r) \subseteq {\mathcal Cl}(z)$.
\end{lemma}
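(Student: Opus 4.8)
The plan is to exhibit explicitly the translation witnessing $z \sim z'$ for an arbitrary $z' \in L_v(z) \cap B(x,r)$. Write $z' = z + \alpha v$ and set $\delta = z' - z = \alpha v$. Since $z, z' \in B(x,r)$ and the ball is open, I would first choose a radius $\rho > 0$ small enough that both $B(z,\rho)$ and $B(z',\rho)$ are contained in $B(x,r)$; concretely, any $\rho < \min(r - |z-x|,\, r - |z'-x|)$ works. The candidate witness for $z \sim z'$ is then the translation $t\colon w \mapsto w + \delta$, and the whole statement reduces to checking that $t$ restricts to a one-to-one map from $B(z,\rho)\cap X$ onto $B(z',\rho)\cap X$, as required by Definition~\ref{de:same-neighborhood}.

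The geometric part is immediate: if $|w - z| < \rho$ then $|t(w) - z'| = |w - z| < \rho$, so $t$ sends $B(z,\rho)$ into $B(z',\rho)$, and symmetrically $w \mapsto w - \delta$ sends $B(z',\rho)$ into $B(z,\rho)$. Hence $t$ is already a bijection between the two balls, and the only real content is that it respects membership in $X$. Here I would invoke that $r$ is safe for $x$: by Proposition~\ref{pr:uniform-radius} the saturation inclusion~(\ref{eq:saturation}) holds for the given stratum $v$, that is, $B(x,r)\cap L_v(X\cap B(x,r)) \subseteq X$.

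For the forward direction, take $w \in B(z,\rho)\cap X$. Then $w \in X\cap B(x,r)$ and $t(w) = w + \alpha v \in L_v(w) \subseteq L_v(X\cap B(x,r))$; since $t(w)\in B(z',\rho)\subseteq B(x,r)$, saturation yields $t(w) \in X$. For the backward direction, take $w' \in B(z',\rho)\cap X$ and set $w = w' - \alpha v$. Because the line through $w'$ in direction $v$ is unchanged by replacing $v$ with $-v$, we have $w \in L_v(w') \subseteq L_v(X\cap B(x,r))$, and $w \in B(z,\rho)\subseteq B(x,r)$, so saturation again gives $w \in X$, with $t(w)=w'$. Thus $t$ carries $B(z,\rho)\cap X$ onto $B(z',\rho)\cap X$, so $z \sim z'$, i.e. $z' \in {\mathcal Cl}(z)$, which is the claim.

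I do not anticipate a genuine obstacle here; the only point requiring care is that both the source ball $B(z,\rho)$ and the target ball $B(z',\rho)$ must lie inside $B(x,r)$, so that the single inclusion of Proposition~\ref{pr:uniform-radius} can be applied in each direction, and this is exactly what dictates the choice of $\rho$ above. Note finally that the non-singularity of $x$ enters only to guarantee that a nonzero stratum $v$ exists; once $v \in \Str(x)\setminus\{0\}$ is fixed, the argument is a formal consequence of the uniform safe radius and the translation-invariance of $X$ along $v$ inside $B(x,r)$.
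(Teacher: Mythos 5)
Your proof is correct and follows essentially the same route as the paper's: choose a radius $\rho$ so that both $B(z,\rho)$ and $B(z',\rho)$ lie inside $B(x,r)$, and use the saturation inclusion of Proposition~\ref{pr:uniform-radius} for the safe radius $r$ to see that the translation by $z'-z$ preserves membership in $X$, which is exactly Definition~\ref{de:same-neighborhood}. The paper's version is just a more compressed statement of the same argument ($z'+w\in L_v(z+w)$, hence $z+w\in X\leftrightarrow z'+w\in X$).
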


\begin{proof}
Let $z' \in L_v(z) \cap B(x,r) $, and  $s>0$ be such that both $B(z,s),B(z',s)$ are included in  $B(x,r)$. For every $w\in B(0,s)$ we have $z'+w \in L_v(z+w)$ thus $z+w\in X \leftrightarrow z'+w\in X$.
\end{proof}

\subsection{Relativization to affine subspaces}

We relativize the notion of singularity and strata  to an affine subspace $S\subseteq \R^{n}$.
The next definition should come as no surprise.

\begin{definition}
\label{de:H-singular}
Given a subset $X\subseteq  \R^n$, an affine subspace $S\subseteq \R^{n}$  and a point $x\in S$, we say that  a  vector $v \in \R^n \setminus  \{0\}$ parallel to $S$
is an $(X,S)$-\emph{stratum for the point} $x$  if for all sufficiently small $r>0$ it holds
\begin{equation}
\label{eq:relative-stratum}
B(x,r) \cap  L_{v}(X \cap B(x,r) \cap S) \subseteq X.
\end{equation}

By convention the zero vector is also considered as a $(X,S)$-stratum. The set of  $(X,S)$-strata of $x$ is denoted  $\text{Str}_{(X ,S)}(x)$.
 We define the equivalence relation   $x \sim_{(X,S)} y$ on $S$ as follows: $x \sim_{(X,S)} y$ if and only if there exists a real $r>0$ such that $x+w \in X \leftrightarrow y+w \in X$ for every $w \in \R^n$ parallel to $S$ and such that $|w|<r$.
 A point $x\in S$ is $(X,S)$-\emph{singular} if it has no $(X,S)$-stratum. For simplicity when $S$ is the space $\R^{n}$ we maintain  the previous terminology and
speak of   $X$-strata and
$X$-singular points. We say that a real $r>0$ is $(X,S)$-\emph{safe} if (\ref{eq:relative-stratum}) holds for every nonzero $(X,S)-$stratum $v$.
\end{definition}

\begin{remark} Singularity and nonsingularity do not go through restriction to affine subspaces.  E.g., in the real plane, let $X=\{(x,y) \ | \  y<0\}$  and  $S=\{(x,y) \mid x=0\}$. Then the origin is not $X-$singular but it is $(X,S)-$singular. All other elements of $S$ admit $(0,1)$ as an $(X,S)-$stratum thus they are not $(X,S)-$singular.
The opposite situation may occur.  In the real plane, let $X=\{(x,y) \ | \  y<0\} \cup S$.
Then   the origin is  $X-$singular but it is not  $(X,S)-$singular.
\end{remark}

\subsubsection{Relativization of the space of strata}

\begin{lemma}\label{le:general-projstrat}
Let $S$ be an affine hyperplane of $\R^n$ and $x \in S$.  {Let $V$ be the vector subspace generated by $\text{Str}_X(x) \setminus \text{Str}_{(X,S)}(x)$.
If $V\not=\{0\}$ then $\text{Str}_X(x) = V + \text{Str}_{(X,S)}(x)$, and otherwise $\text{Str}_X(x) \subseteq \text{Str}_{(X,S)}(x)$.}
\end{lemma}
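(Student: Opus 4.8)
Write $W=\text{Str}_X(x)$ and $W_S=\text{Str}_{(X,S)}(x)$, and let $\vetore{S}$ denote the linear hyperplane parallel to $S$ (so $\dim \vetore{S}=n-1$). By definition every $(X,S)$-stratum is parallel to $S$, hence $W_S\subseteq \vetore{S}$. I would first record the easy inclusion $W\cap\vetore{S}\subseteq W_S$: if $v\in W$ is parallel to $S$, then since $X\cap B(x,r)\cap S\subseteq X\cap B(x,r)$ we have $L_v(X\cap B(x,r)\cap S)\subseteq L_v(X\cap B(x,r))$, so the relative saturation condition~(\ref{eq:relative-stratum}) for $v$ follows from the absolute condition~(\ref{eq:saturation}); thus $v\in W_S$. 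Combining $W_S\subseteq\vetore{S}$ with $W\cap\vetore{S}\subseteq W_S$ gives $W\setminus W_S=W\setminus\vetore{S}$, and therefore $V=\mathrm{span}(W\setminus\vetore{S})$. In particular $V\ne\{0\}$ if and only if $W\not\subseteq\vetore{S}$, i.e. exactly when $X$ has a stratum at $x$ transverse to $S$.

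\textbf{The two cases, modulo a key claim.} If $V=\{0\}$ then $W\subseteq\vetore{S}$, so $W=W\cap\vetore{S}\subseteq W_S$, which is the ``otherwise'' conclusion. Suppose now $V\ne\{0\}$ and fix $v_0\in W\setminus\vetore{S}$. Since $S$ is a hyperplane and $v_0\notin\vetore{S}$ we have the direct sum $\R^n=\vetore{S}\oplus\R v_0$. For the inclusion $W\subseteq V+W_S$, decompose any $w\in W$ as $w=w'+\beta v_0$ with $w'\in\vetore{S}$; then $w'=w-\beta v_0\in W\cap\vetore{S}\subseteq W_S$ and $\beta v_0\in V$, so $w\in V+W_S$. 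The reverse inclusion $V+W_S\subseteq W$ follows from $V\subseteq W$ (immediate, as $V=\mathrm{span}(W\setminus W_S)\subseteq W$) together with the nontrivial fact that $W_S\subseteq W$, which is the content of the following claim. Granting it, $W=V+W_S$, completing the main case.

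\textbf{Key claim and proof sketch.} I claim that, whenever $V\ne\{0\}$, every nonzero $(X,S)$-stratum $u$ is an $X$-stratum. Let $\pi$ be the projection of a neighborhood of $x$ onto $S$ along $v_0$ (well defined since $\R^n=\vetore{S}\oplus\R v_0$ and $x\in S$); note $\pi$ is affine, fixes $S$ pointwise, satisfies $\pi(x)=x$, and, because $u\in\vetore{S}$, commutes with translation by $u$ in the sense that $\pi(z+\alpha u)=\pi(z)+\alpha u$. Choose $r>0$ that is an $X$-safe radius for $v_0$ and an $(X,S)$-safe radius for $u$, and then $r'\in(0,r]$ small enough that $\pi\bigl(B(x,r')\bigr)\subseteq B(x,r)$ (possible by continuity of $\pi$). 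Using the $v_0$-saturation of $X$ in $B(x,r)$ and the fact that $\pi(z)\in L_{v_0}(z)$, one checks the equivalence: for $z\in B(x,r')$ one has $z\in X$ if and only if $\pi(z)\in X$. Now take $p\in X\cap B(x,r')$ and $\alpha$ with $q:=p+\alpha u\in B(x,r')$. Then $\pi(p)\in X\cap S\cap B(x,r)$, and $\pi(q)=\pi(p)+\alpha u$ lies in $B(x,r)$ and in $L_u\bigl((X\cap S)\cap B(x,r)\bigr)$, hence $\pi(q)\in X$ by the $u$-saturation of $X\cap S$; applying the equivalence to $q$ yields $q\in X$. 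This verifies~(\ref{eq:saturation}) for $u$ inside $B(x,r')$, so $u\in W$, proving the claim.

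\textbf{Main obstacle.} Everything except the key claim is linear algebra plus the two one-line inclusions of the first paragraph. The real work, and the step I expect to be delicate, is the projection argument: I must track radii so that every point handed to a saturation hypothesis genuinely lies in the ball where that hypothesis is valid (this is exactly the role of the nesting $\pi(B(x,r'))\subseteq B(x,r)$), and I must exploit both that $v_0$ is transverse to $S$ (to make $\pi$ a genuine cross-section turning the absolute membership $z\in X$ into the relative one $\pi(z)\in X$) and that $u$ is parallel to $S$ (to make $\pi$ commute with the $u$-translation). These are precisely the places where the hypotheses $V\ne\{0\}$ and $u\in W_S$ are used.
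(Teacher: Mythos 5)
Your proof is correct and follows essentially the same route as the paper's: in the case $V\ne\{0\}$ the paper likewise reduces everything to showing $\text{Str}_{(X,S)}(x)\subseteq\text{Str}_X(x)$, and proves it by projecting points of a small ball onto $S$ along a transverse stratum $v$, using the absolute saturation to transfer membership in $X$ to the projections and then the relative saturation inside $S$ — exactly your map $\pi$ and your radius nesting. Your preliminary observation that $W\cap\vetore{S}\subseteq W_S$ (hence the chosen $v_0$ is automatically transverse to $S$) is a useful point that the paper leaves implicit.
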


\begin{proof}
It is clear that if $V=\{0\}$ then every $X$-stratum of $S$   is  an $(X,S)$-stratum.

Now assume there exists $v\in \text{Str}_X(x) \setminus \text{Str}_{(X,S)}(x)$. It suffices to prove that
 all $w\in \text{Str}_{(X,S)}(x)$ belong to  $\text{Str}_X(x)$. Let $s>0$ be simultaneously $(X ,S)-$safe and $X-$safe for $x$. Let $0<s'<s$ be such that $L_v(z) \cap S \subseteq B(x,s)$ for every $z \in B(x,s')$. Let $y_1,y_2 \in B(x,s')$ be such that $y_1-y_2$ and $w$ are parallel. It suffices to prove the equivalence  $y_1 \in X \leftrightarrow y_2 \in X$ . Let $y'_1$ (resp. $y'_2$) denote the intersection point of $L_v(y_1)$ and $S$ (resp. $L_v(y_2)$ and $S$). We have $y_1,y'_1 \in B(x,s) $, $v \in \text{Str}_X{(x)}$, and $s$ is $X-$safe for $x$, thus $y_1 \in X \leftrightarrow y'_1 \in X$. Similarly we have  $y_2 \in X \leftrightarrow y'_2 \in X$. Now $y'_1,y'_2 \in B(x,s)$,  $y'_1-y'_2$ and $w$ are parallel, and $w \in \text{Str}_{(X,S)}(x)$, which implies  $y'_1 \in X \leftrightarrow y'_2 \in X$ and thus finally
 $y_1 \in X \leftrightarrow y_2 \in X$.
\end{proof}

\begin{corollary}\label{cor:projstrat}
Let $S$ be an  hyperplane of $\R^n$  with underlying vector subspace $V$, and let $x \in S$ be non-singular.
If $\text{Str}_X(x) \setminus  V$ is nonempty then
$
\text{Str}_{(X,S)}(x)=\text{Str}_X(x) \cap V
$.
\end{corollary}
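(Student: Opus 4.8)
The plan is to prove the two inclusions of the asserted equality separately, obtaining the nontrivial one from Lemma~\ref{le:general-projstrat} and establishing the other directly from the definitions. Throughout I write $W$ for the vector subspace generated by $\text{Str}_X(x) \setminus \text{Str}_{(X,S)}(x)$ that appears in the statement of Lemma~\ref{le:general-projstrat}, to avoid clashing with the hyperplane direction $V$ of the corollary.

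First I would record the trivial observation that every $(X,S)$-stratum is by definition a vector parallel to $S$, so $\text{Str}_{(X,S)}(x) \subseteq V$. The key step for the inclusion $\text{Str}_{(X,S)}(x) \subseteq \text{Str}_X(x) \cap V$ is then to check that we are in the nondegenerate case of Lemma~\ref{le:general-projstrat}, i.e.\ that $W \neq \{0\}$. This is where the hypothesis enters: since $\text{Str}_{(X,S)}(x) \subseteq V$, any vector of $\text{Str}_X(x) \setminus V$ (which exists by assumption) automatically lies in $\text{Str}_X(x) \setminus \text{Str}_{(X,S)}(x)$, so $W \neq \{0\}$. The lemma then gives $\text{Str}_X(x) = W + \text{Str}_{(X,S)}(x)$, and in particular $\text{Str}_{(X,S)}(x) \subseteq \text{Str}_X(x)$. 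Intersecting with $V$ and using $\text{Str}_{(X,S)}(x) \subseteq V$ yields $\text{Str}_{(X,S)}(x) \subseteq \text{Str}_X(x) \cap V$.

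For the reverse inclusion $\text{Str}_X(x) \cap V \subseteq \text{Str}_{(X,S)}(x)$, which I expect to be the easy, hypothesis-free direction, I would argue straight from Definition~\ref{de:H-singular}. Let $w \in \text{Str}_X(x) \cap V$ be nonzero and let $r$ be an $X$-safe radius for $x$. Take any $q \in X \cap B(x,r) \cap S$ and any $p = q + \alpha w \in B(x,r)$. The crucial point is that $w$ is parallel to $S$, so the line $L_w(q)$ stays inside $S$ and hence $p \in S$; moreover, since $w$ is an $X$-stratum and $r$ is $X$-safe, the saturation property of a safe radius (Proposition~\ref{pr:uniform-radius}) forces $p \in X$. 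This is exactly condition~(\ref{eq:relative-stratum}), so $w \in \text{Str}_{(X,S)}(x)$, as required. Combining the two inclusions gives the equality.

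The only real subtlety, and thus the step I would flag as the main obstacle, is the forward inclusion: one must correctly recognise that the hypothesis $\text{Str}_X(x) \setminus V \neq \emptyset$ is precisely what places us in the first case of Lemma~\ref{le:general-projstrat} rather than the degenerate one $\text{Str}_X(x) \subseteq \text{Str}_{(X,S)}(x)$. That this hypothesis is genuinely needed is confirmed by the second example in the remark following Definition~\ref{de:H-singular}, where $X=\{(x,y)\mid y<0\}\cup S$ makes the origin $X$-singular (so $\text{Str}_X(x)\subseteq V$) yet not $(X,S)$-singular, and the asserted equality fails. The reverse inclusion, by contrast, needs no hypothesis at all and follows solely from the fact that a stratum lying in $V$ keeps its saturating lines within $S$.
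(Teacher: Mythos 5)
Your proof is correct and follows the route the paper intends: the corollary is stated without proof as an immediate consequence of Lemma~\ref{le:general-projstrat}, and your argument supplies exactly the missing details --- using $\text{Str}_{(X,S)}(x)\subseteq V$ to see that the hypothesis $\text{Str}_X(x)\setminus V\neq\emptyset$ forces the nondegenerate case of the lemma, and checking the easy inclusion $\text{Str}_X(x)\cap V\subseteq \text{Str}_{(X,S)}(x)$ directly from Definition~\ref{de:H-singular} and the safe-radius property. The only cosmetic remark is that in the reverse inclusion the observation that $L_w(q)$ stays in $S$ is not actually needed; what matters is only that $X\cap B(x,r)\cap S\subseteq X\cap B(x,r)$, so the $X$-saturation condition implies the $(X,S)$-one.
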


\subsubsection{Relativization of the $\sim$-relation}

\begin{lemma}\label{le:tech2}
Let $S$ be an  hyperplane of $\R^n$, $y,z \in S$, and $v \ne \{0\}$ be a common $X-$stratum of $y,z$ not parallel to $S$. If $y \sim_{(X ,S)} z$ then $y \sim_X z$.
\end{lemma}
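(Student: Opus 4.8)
The plan is to exploit $v$ as a direction transversal to $S$ in order to reduce arbitrary small translates of $y$ and $z$ to translates lying parallel to $S$, where the hypothesis $y \sim_{(X,S)} z$ applies. First recall that, unwinding Definition \ref{de:same-neighborhood}, the conclusion $y \sim_X z$ amounts to producing a radius $r' > 0$ such that $y + w \in X \leftrightarrow z + w \in X$ for every $w \in \R^n$ with $|w| < r'$. Since $v$ is not parallel to $S$ and $\dim S = n-1$, writing $V$ for the direction of $S$ we obtain a direct sum $\R^n = \R v \oplus V$; hence every $w$ decomposes uniquely as $w = \alpha v + w'$ with $\alpha \in \R$ and $w' \in V$, and, the two associated projections being continuous and linear, there is a constant $C \geq 1$ with $|\alpha v| \leq C|w|$ and $|w'| \leq C|w|$.

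Next comes the slide-project-slide argument. Both $y$ and $z$ are nonsingular, since each carries the nonzero stratum $v$; fix $X$-safe radii $r_y$ for $y$ and $r_z$ for $z$. Given $w$ with $w = \alpha v + w'$, the points $y+w$ and $y+w'$ lie on the common line $L_v(y+w')$ and differ by $\alpha v$; provided both belong to $B(y,r_y)$, Lemma \ref{le:tech1} (applied at $y$ with the stratum $v$ and radius $r_y$) gives $y + w \sim_X y + w'$, and in particular $y+w \in X \leftrightarrow y+w' \in X$. Symmetrically, if $z+w$ and $z+w'$ lie in $B(z,r_z)$ then $z+w \in X \leftrightarrow z+w' \in X$. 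Finally let $r_0 > 0$ witness $y \sim_{(X,S)} z$; since $w' \in V$ is parallel to $S$, whenever $|w'| < r_0$ we have $y + w' \in X \leftrightarrow z + w' \in X$. Chaining the three equivalences yields $y + w \in X \leftrightarrow z + w \in X$.

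It remains to choose $r'$ so that all the side conditions hold at once, and this radius bookkeeping is the only delicate point. Taking $r' = \min\{r_y, r_z, r_0\}/C$, any $w$ with $|w| < r'$ satisfies $|w| < r_y$ and $|w'| \leq C|w| < r_y$, so both $y+w$ and $y+w'$ lie in $B(y, r_y)$; likewise both $z+w$ and $z+w'$ lie in $B(z, r_z)$; and $|w'| < r_0$. Hence the chain above is valid for every such $w$, giving $y \sim_X z$. The essential obstacle is exactly this uniform control: one must ensure that sliding from $y+w$ to the point $y+w' \in S$ along $v$ never leaves the safe ball $B(y,r_y)$, which is precisely what the transversality of $v$ to $S$ secures through the bound $|w'| \leq C|w|$. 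Everything else is a routine concatenation of the three membership equivalences.
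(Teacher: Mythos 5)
Your proof is correct and follows essentially the same route as the paper's: project $y+w$ and $z+w$ onto $S$ along the transversal direction $v$, use the $X$-safe radii (via Lemma \ref{le:tech1}, where the paper invokes the safe-radius property directly) to slide along $v$ without changing membership in $X$, and then apply the hypothesis $y \sim_{(X,S)} z$ to the projected point. Your explicit constant $C$ merely makes quantitative the paper's tacit choice of a small enough $s$ ensuring the projections stay inside the safe balls; the argument is the same.
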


\begin{proof}
Assume $y \sim_{(X ,S)} z$, and let $r>0$ be $(X ,S)-$ and $X-$ safe both for $y$ and $z$.
 Since $v$ is not parallel to $S$, there exists  $s>0$ such that  for every $w \in \R^n$ with $|w|<s$, the intersection point of $L_v(y+w)$ (resp.  $L_v(z+w)$) and $S$ exists because $\dim(S)=n-1$ and belongs to $B(y,r)$ (resp. $B(z,r)$).
 It suffices to show that  $y+w \in X \leftrightarrow z+w \in X$. Let   $y+w'$ be the intersection point of $L_v(y+w)$ and $S$.

 By our hypothesis on $s$, $y+w'$ belongs to $B(y,r)$. Moreover  $r$ is $X-$safe for $y$, $v \in \text{Str}_X(y)$, and $w'-w$ is parallel to $v$, therefore $y+w \in X \leftrightarrow y+w' \in X$. Similarly we have $z+w \in X \leftrightarrow z+w' \in X$.
 Now $|w'|<r$, thus by our assumptions $y \sim_{(X ,S)} z$  we have $y+w' \in X \leftrightarrow z+w' \in X$ and therefore
$y+w \in X \leftrightarrow z+w\in X$.
\end{proof}

Next we consider a particular case for $S$ which plays a crucial role in expressing the characterisation stated in the main theorem.
It is also a tool for reasoning by induction in Section \ref{subsec:alternative}.

\begin{definition}
\label{de:section}
Given an index $0\leq i < n$ and a real $c\in \R$ consider the hyperplane
$$
H= \R^{i}\times \{c\} \times \R^{n-i-1}.
$$
The intersection $X \cap H$ is
called a \emph{section} of $X$. It is a \emph{rational section} if $c$ is a rational number.
We define $\pi_{H}: \R^{n} \rightarrow  \R^{n-1}$ as $\pi_{H}(x_1,\dots,x_n)=(x_1,\dots,x_{i-1},x_{i+1},\dots,x_n)$.
\end{definition}
The following facts are easy consequences of the above definitions:
for all $x,y \in H$ and $v $ a vector parallel to $H$ we have:
\begin{enumerate}
\item $x \sim_{(X,H)} y$ if and only if $\pi_{H}(x) \sim_{\pi_{H}(X)} \pi_{H}(y)$
\item $v \in \text{Str}_{(X ,H)}(x)$ if and only if $\pi_{H}(v) \in \text{Str}_{\pi_{H}(X)}(\pi_{H}(x))$. In particular $x$ is $(X,H)-$singular if and only if $\pi_{H}(x)$ is $\pi_{H}(X)-$singular.
\end{enumerate}

\subsection{Intersection of  lines and equivalence classes}\label{subsection:intersectionlines}

In this section we describe the intersection of a $\sim$-class $E$ with a line parallel to some  $v \in \Str(E)$.
It relies on  the notion of adjacency of $\sim$-classes.

\begin{definition}
	Let  $E$  be a nonsingular  $\sim$- class   and let  $v$ be one of its strata.
	A point $x$ is $v-$\emph{adjacent}
	\emph{to}  $E$ if there exists $\epsilon>0$ such that for all $0< \alpha\leq \epsilon$ we have $x+\alpha v\in E$.
\end{definition}

{
\begin{example}(Example \ref{ex:square} continued)
We specify Example  \ref{ex:square} by choosing the square as the unit square with vertices $(0,0),(0,1),(1,0)$ and $(1,1)$. All elements of the bottom open edge of the square belong to the same $\sim$-class $E$. The vector $v=(1,0)$ is a stratum of $E$. The vertex $(0,0)$ is $v-$adjacent to $E$. Similarly every element of $E$ is also $v-$adjacent to $E$. However the vertex $(1,0)$ is not $v-$adjacent to $E$ (but it is $(-v)-$adjacent to $E$).
\end{example}
}

The notion of adjacency is a property of the $\sim$-class.

\begin{lemma}\cite[Lemma 5.2]{BC2020}
\label{le:congruence} %
Let  $F$  be a $\sim$-class.

\begin{enumerate}
\item For all $x,y\in F$, all nonzero vectors  $v$ and   all $\sim$-classes $E$,   $x$ is
 $v$-adjacent to $E $ if and only if  $y$ is  $v$-adjacent to $E$.

\item For each vector $v$ there exists a most one $\sim$-class $E$ such that $F$ is $v$-adjacent to $E$.
\end{enumerate}
\end{lemma}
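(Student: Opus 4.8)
The plan is to derive part~(2) immediately from the fact that distinct $\sim$-classes are disjoint, and to reduce part~(1) to a single \emph{propagation} observation: if $x\sim y$, then the translation $w\mapsto w+(y-x)$ carries the local $\sim$-structure around $x$ onto that around $y$, so it preserves membership in any fixed class.

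First I would record the following claim. Suppose $x\sim y$ and let $r>0$ be a radius witnessing this, so that $x+w\in X \leftrightarrow y+w\in X$ for all $|w|<r$. Then $x+w\sim y+w$ whenever $|w|<r$. To see this, fix such a $w$ and pick $s>0$ with $s<r-|w|$; for every $|u|<s$ we have $|w+u|<r$, hence $(x+w)+u=x+(w+u)\in X \leftrightarrow y+(w+u)=(y+w)+u\in X$. Since $y+w=(x+w)+(y-x)$, this is exactly the assertion $x+w\sim y+w$ with witnessing radius $s$.

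Granting the claim, part~(1) follows quickly. Let $x,y\in F$, so $x\sim y$ with some witnessing radius $r$, and suppose $x$ is $v$-adjacent to $E$, i.e. $x+\alpha v\in E$ for all $0<\alpha\le\epsilon$. After shrinking $\epsilon$ so that in addition $\epsilon\,|v|<r$, each such $\alpha$ satisfies $|\alpha v|<r$, so by the claim $x+\alpha v\sim y+\alpha v$; as $x+\alpha v\in E$ this forces $y+\alpha v\in E$. Hence $y$ is $v$-adjacent to $E$, and the converse follows by exchanging $x$ and $y$ and using the symmetry of $\sim$. For part~(2), if $F$ is $v$-adjacent to both $E_1$ and $E_2$, I fix $x\in F$ and choose any $\alpha>0$ below both adjacency thresholds; then $x+\alpha v\in E_1\cap E_2$, and since $\sim$-classes are pairwise equal or disjoint, this yields $E_1=E_2$.

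The only genuine content is the propagation claim; everything else is a short $\epsilon$-argument. I do not anticipate a serious obstacle, the single point demanding care being the uniform choice of radii so that the translated small balls remain inside the ball witnessing $x\sim y$, which is what licenses transferring the equivalence along the direction $v$.
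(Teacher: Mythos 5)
Your proof is correct. The paper does not actually reprove this lemma --- it is imported verbatim from \cite[Lemma 5.2]{BC2020} --- but your argument is the expected one: the propagation claim that $x\sim y$ with witnessing radius $r$ forces $x+w\sim y+w$ for all $|w|<r$ (via the shrunken radius $s<r-|w|$) is exactly what makes adjacency a property of the class rather than of the point, and part (2) then follows from the pairwise disjointness of $\sim$-classes, just as you say.
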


Consequently, if for some $x\in F$ and some vector $v$, $x$ is $v$-adjacent to $E$ it makes sense to  say that the class $F$ is $v$-adjacent to $E$.

\begin{lemma}\label{le:open-ter}\cite[Corollary 5.6]{BC2020}
Let $x \in \R^n$ be non-singular, $E={\mathcal Cl}(x)$  and let $v \in \Strem(x)\setminus\{0\}$. The set  $L_{v}(x)\cap E$  is a union of disjoint  open segments
(possibly infinite in one or both directions) of   $L_{v}(x)$, i.e.,
of the form $(y-\alpha v , y+ \beta v)$ with $0< \alpha,\beta\leq \infty$ and  $y\in E$.

If $\alpha < \infty$ (resp. $\beta < \infty$) then the point $y-\alpha v$ (resp. $y+ \beta v$)
belongs to a $\sim$-class $F\not=E$ such that
$\text{dim} (F)< \text{dim} (E)$ and  $F$ is $v$-adjacent  (resp. $(-v)-$adjacent)  (or simply \emph{adjacent}
when $v$ is understood) to $E$.
\end{lemma}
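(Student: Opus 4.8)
The plan is to first show that $L_v(x) \cap E$ is \emph{open} in the line $L_v(x)$, and then to read off the segment decomposition from the classical description of open subsets of $\R$. For openness, take any $z \in L_v(x) \cap E$. Since $z \sim x$, Remark~\ref{re:sim-and-str} gives $\Str(z) = \Str(x) \ni v$, so $z$ is non-singular with $v$ as a stratum. Choosing a safe radius $r$ for $z$ and applying Lemma~\ref{le:tech1} with $z$ as center yields $L_v(z) \cap B(z,r) \subseteq {\mathcal Cl}(z) = E$; as $z \in L_v(x)$ we have $L_v(z) = L_v(x)$, so an open sub-segment of $L_v(x)$ around $z$ lies in $E$. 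Hence $L_v(x) \cap E$ is open in $L_v(x) \cong \R$, and is therefore a disjoint union of its connected components, each an open segment $(y - \alpha v, y + \beta v)$ with $0 < \alpha, \beta \le \infty$ and $y \in E$.

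Next I would analyse a finite endpoint; by the symmetry $v \leftrightarrow -v$ it suffices to treat $p := y + \beta v$ with $\beta < \infty$. Because the segment is a maximal interval of the open set $L_v(x) \cap E$, the endpoint $p$ cannot lie in $L_v(x) \cap E$; since $p \in L_v(x)$ this forces $p \notin E$, so $p \in F := {\mathcal Cl}(p)$ with $F \ne E$. For adjacency, note that $p - \alpha' v = y + (\beta - \alpha')v$ lies in the open segment for all $0 < \alpha' < \alpha + \beta$, hence in $E$; thus $p$ is $(-v)$-adjacent to $E$, and Lemma~\ref{le:congruence} promotes this to the statement that the whole class $F$ is $(-v)$-adjacent to $E$.

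The main obstacle is the strict dimension drop $\dim(F) < \dim(E)$, which I would obtain by proving $\Str(p) \subsetneq \Str(E)$. The inclusion $\Str(p) \subseteq \Str(E)$ comes from Lemma~\ref{le:strx-subset-stry}: picking a safe radius $s$ for $p$ (which exists by Proposition~\ref{pr:uniform-radius} when $p$ is non-singular, and trivially by convention when $p$ is singular), the interior points $q_{\alpha'} := p - \alpha' v$, which lie in $E$, also lie in $B(p,s)$ for small $\alpha' > 0$, so $\Str(p) \subseteq \Str(q_{\alpha'}) = \Str(E)$. To make the inclusion strict I would show $v \notin \Str(p)$ while $v \in \Str(E)$: if $v$ were a stratum at $p$, then Lemma~\ref{le:tech1} centered at $p$ would force $L_v(p) \cap B(p,s) \subseteq F$, contradicting $q_{\alpha'} \in E \ne F$ for small $\alpha'$. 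Since $v \in \Str(x) = \Str(E)$, this gives $\Str(p) \subsetneq \Str(E)$ and hence $\dim(F) < \dim(E)$. The delicate point throughout is that the safe radius of points of $E$ shrinks to $0$ as one approaches $p$, so one cannot transfer strata directly from $E$ up to $p$; the argument instead transfers them from $p$ outward via the monotonicity in Lemma~\ref{le:strx-subset-stry}, which is precisely why that lemma's ``into the safe ball'' direction is indispensable.
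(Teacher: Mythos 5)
Your argument is correct. The paper does not reprove this statement (it is imported from \cite[Corollary 5.6]{BC2020}), but your reconstruction from the toolkit restated here is sound: Lemma~\ref{le:tech1} applied at each point of $L_v(x)\cap E$ gives openness in the line, the connected-component decomposition gives the segments, maximality of a component gives $p\notin E$, and the definition of adjacency plus Lemma~\ref{le:congruence} handles the adjacency claim. The dimension drop is the only delicate step and you handle it the right way: $\Str(p)\subseteq\Str(E)$ must be obtained by applying Lemma~\ref{le:strx-subset-stry} with the \emph{endpoint} $p$ as center (since safe radii of points of $E$ degenerate as they approach $p$), and strictness follows because $v\in\Str(p)$ would, via Lemma~\ref{le:tech1} at $p$, force nearby points of the segment into ${\mathcal Cl}(p)\ne E$; since strata sets are subspaces (Proposition~\ref{pr:strata-subspace}), proper inclusion yields $\dim(F)<\dim(E)$.
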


\section{Characterizations of \Ss-definable relations}\label{sec:main}

\subsection{Characterization  in \Ls-definable relations}
\label{subsec:properties-of-Ss-and-Ls}

We recall our previous characterization of \Ss-definable  among \Ls-definable relations.

\begin{theorem}
\label{th:CNS}\cite[Theorem 6.1]{BC2020}
Let $n \geq 1$ and let $X \subseteq \R^n$ be \Ls-definable. Then $X$ is \str{\R,+,<,1}-definable if and only if the following
two conditions hold
\begin{description}
\itemsep=0.9pt
\item  {\em (FSP)} There exist only finitely many $X-$singular points.
\item  {\em  (DS)}  Every rational section of $X$ is \Ss-definable.
\end{description}
\end{theorem}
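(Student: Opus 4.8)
The plan is to prove necessity directly from quantifier elimination, and to obtain sufficiency by reducing, via an induction on $n$, to the two intermediate properties (RB) and (FI) announced in the introduction, from which an explicit \Ss-formula can be assembled.

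For necessity, suppose $X$ is \Ss-definable. By Theorem~\ref{th:quantifier-elimination-for-R-plus}, $X$ is a finite union of (open or closed) polyhedra cut out by hyperplanes with rational coefficients. For such a set a point fails to be singular as soon as one of these hyperplanes passes through it with room to slide, so $\Str(x)=\{0\}$ can occur only at the finitely many vertices of the induced arrangement; this gives (FSP). For (DS), fixing a coordinate to a rational value $c$ amounts to substituting $c$ into finitely many rational linear inequalities; since $c=p/q$ is itself \Ss-definable (by $qx=p$), the resulting relation on the remaining coordinates is again a rational Boolean combination of linear inequalities, hence \Ss-definable.

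For sufficiency I would argue by induction on $n$, the engine being the \emph{local indistinguishability} of \Ss- and \Ls-definable relations: in a small enough ball every \Ls-definable set coincides with a translate of an \Ss-definable one, so that locally the strata directions can be chosen with rational components. I would first establish (RB), that $\Str(x)$ has a rational basis at each nonsingular $x$. The reduction to lower dimension goes through the sections of Definition~\ref{de:section} and the projection $\pi_H$: the relativization results Lemma~\ref{le:general-projstrat} and Corollary~\ref{cor:projstrat} relate $\text{Str}_X(x)$ to the strata of $\pi_H(X)$, to which the induction hypothesis applies since (DS) makes every rational section \Ss-definable. The rational-basis information is then transported from the section back to the ambient space using Lemma~\ref{le:tech2}, which lifts a $\sim_{(X,H)}$-equivalence to a $\sim_X$-equivalence along a stratum transverse to $H$.

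The crux, and what I expect to be the main obstacle, is deducing (FI): that $\sim_X$ has finite index. The idea is that the finitely many singular points supplied by (FSP) partition $\R^n$ into finitely many regions on which the local type of $X$ is controlled, and that within a region Lemma~\ref{le:open-ter} forces any $\sim$-class to meet each stratum line in finitely many open segments whose endpoints lie in strictly lower-dimensional classes, so an induction on dimension bounds the number of classes. The delicacy is that singularity is neither created nor destroyed uniformly under restriction to a subspace (as the remark following Definition~\ref{de:H-singular} shows), so the bookkeeping between $X$-strata and $(X,H)$-strata must be tracked carefully, and one must verify that crossing a singular point produces no new types beyond finitely many --- this is precisely where (FSP) and the rationality from (RB) combine. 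Once (FI) holds, each of the finitely many $\sim$-classes is described by its rational strata subspace together with the finitely many adjacency conditions of Lemma~\ref{le:open-ter}; all these data being rational, each class is \Ss-definable and $X$ is their finite union.
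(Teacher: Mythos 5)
Your proposal follows essentially the same route as the paper: the paper recalls this theorem from \cite{BC2020} without reproving it, noting only that necessity of (FSP) is Proposition 4.6 of that reference, that necessity of (DS) is immediate (a rational section is an intersection of two \Ss-definable sets), and that sufficiency goes through the intermediate properties (RB) and (FI) of Proposition \ref{pr:recap} --- exactly the skeleton you describe. Your sufficiency direction remains a sketch (you rightly flag (FI) as the crux), but the strategy --- necessity from quantifier elimination, rational strata via sections and the relativization lemmas, finiteness of $\sim$-classes via Lemma \ref{le:open-ter} and induction on dimension, then assembling an \Ss-formula class by class --- matches the argument of \cite{BC2020} and its generalization carried out here in the proof of Theorem \ref{th:crit-n}.
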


The necessity of  condition  (FSP) is  proved  by Proposition 4.6 of \cite{BC2020}  and that of   (DS)
is trivial since a rational section is the intersection of two \Ss-definable relations.
The proof that conditions (FSP) and (DS) are sufficient uses several properties of \Ls-definable
 relations which are listed in the form of a proposition below.
\begin{proposition}\label{pr:recap}
Let $n \geq 1$ and $X \subseteq \R^n$ be \Ls-definable. The following holds.
\begin{description}
\itemsep=0.9pt
\item  (RSP) The components of the $X$-singular points are rational numbers \cite[Proposition 4.6]{BC2020}.
\item  (FI) The equivalence relation $\sim$ has finite index and thus the number  of different vector
spaces $\Strem(x)$ is finite when $x$ runs over $\R^{n}$ \cite[Corollary 4.5]{BC2020}.
\item  (RB)  For all  nonsingular points $x$, the vector space  $\Str(x)$
has a rational basis in the sense that it  can be generated by a set of  vectors with rational components \cite[Proposition 4.7]{BC2020}.
\end{description}
\end{proposition}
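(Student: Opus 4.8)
The plan is to reduce all three statements to the corresponding, and easier, facts about \Ss-definable relations, by exploiting the \emph{local indistinguishability} of \Ss- and \Ls-definable relations: for every \Ls-definable $X \subseteq \R^n$ and every $x$ there is a radius $r>0$ and an \Ss-definable set $Y$ with $X \cap B(x,r) = Y \cap B(x,r)$. Since the strata $\text{Str}_X(x)$, the dimension, the notion of singularity, and the local data that determine the $\sim$-class are all local invariants, each of them can be computed as though $X$ were the rational polyhedral set $Y$ near $x$. Hence the argument rests on first establishing this local reduction and then reading off its consequences.

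To obtain local indistinguishability I would start from a normal form for \Ls-definable relations. By quantifier elimination for \Ls\ in a suitable expansion of the language, equivalently via the cube decomposition of \cite{BFL08}, the set $X$ is a finite Boolean combination of rational half-spaces $\{\sum_i a_i x_i < b\}$ with $a_i \in \Z$, $b \in \Q$, together with integrality constraints of the form $\{\sum_i a_i x_i \in \Z\}$ refined by congruences. The key point is that each integrality constraint is \emph{locally} trivial: inside a sufficiently small ball $B(x,r)$ the set $\{\sum_i a_i x_i \in \Z\}$ is either empty, or full, or coincides with the single rational affine hyperplane $\{\sum_i a_i y_i = m\}$, where $m = \sum_i a_i x_i \in \Z$. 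Choosing $r$ small enough to resolve all finitely many constraints simultaneously, $X \cap B(x,r)$ agrees with a finite Boolean combination of rational half-spaces and rational hyperplanes, which is \Ss-definable by Theorem \ref{th:quantifier-elimination-for-R-plus}.

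Granting this, (RB) and (RSP) follow from a direct computation of strata for rational polyhedral germs. For a finite Boolean combination of rational half-spaces and hyperplanes, a nonzero vector $v$ is a stratum at $x$ precisely when it is annihilated by the normals of all constraints whose bounding hyperplane passes through $x$ and locally bounds $X$; concretely $\text{Str}_X(x)$ is an intersection of sets of the form $\{v : \sum_i a_i v_i = 0\}$ with $a_i \in \Z$, hence a rational linear subspace, which gives (RB). Moreover $\text{Str}_X(x) = \{0\}$ holds exactly when these rational hyperplanes through $x$ meet in the single point $x$; being the intersection of rational affine hyperplanes, that point has rational coordinates, which gives (RSP).

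The main obstacle is (FI), the finiteness of the index of $\sim$, since it is the only genuinely global assertion. Local indistinguishability alone yields only finitely many local types \emph{around each fixed point}, whereas one must bound the number of types occurring over all of $\R^n$. The \Ss-part contributes only finitely many distinct normals, but each integrality constraint $\{\sum_i a_i x_i \in \Z\}$ produces an infinite family of parallel rational hyperplanes. The resolution is that $\sim$ records the local behaviour only up to translation, so one argues that, modulo the lattice $\Z^n$ and the finitely many moduli appearing in the congruence conditions, only finitely many relative configurations of the active half-spaces and hyperplanes can arise. Since the $\sim$-class of $x$ is determined by this finite configuration datum, $\sim$ has finite index; the finiteness of the number of subspaces $\text{Str}_X(x)$ is then immediate, as each such subspace is determined by the configuration at $x$.
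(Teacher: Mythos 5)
The paper does not prove this proposition at all: all three items are imported verbatim from \cite{BC2020} (Propositions 4.6 and 4.7 and Corollary 4.5), so the only meaningful comparison is with correctness. Your overall strategy --- reduce to the \Ss-definable (polyhedral) case by local indistinguishability, compute strata of rational polyhedral germs for (RB) and (RSP), and invoke a global periodicity argument for (FI) --- is exactly the right one and is the one underlying the cited results. The polyhedral computation itself is fine: $\Str_X(x)$ is the intersection of the kernels of the rational normals of the locally relevant active constraints, hence a rational subspace, and a singular point is the unique solution of a rational affine system, hence rational.

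The genuine flaw is the normal form you build everything on. An \Ls-definable set is \emph{not} in general a finite Boolean combination of rational half-spaces and integrality/congruence constraints $\{\sum_i a_i x_i \in c+d\Z\}$: the set $\{(x,z)\in\R^2 : \lfloor x\rfloor < z\}$ is \Ls-definable, but its trace on the line $z=x-\tfrac12$ is $\bigcup_{n\in\Z}(n+\tfrac12,\,n+1)$, a periodic union of intervals of positive length, which no finite Boolean combination of half-spaces and congruence sets can produce on a line (half-spaces restrict to rays, congruence sets to discrete periodic sets). The correct normal form involves floor terms (\cite{Weis99}), or better, the cube decomposition of \cite{BFL08} recalled as Proposition \ref{prop-decomp}. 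For (RB) and (RSP) this is repairable, since only the \emph{local} statement is used and the cube decomposition delivers it: near any point, $X$ agrees with a finite union of integer translates of the finitely many \Ss-definable pieces $\Delta_m$, which is \Ss-definable by Theorem \ref{th:quantifier-elimination-for-R-plus}. But your (FI) argument is phrased entirely in terms of the false normal form (``each integrality constraint produces an infinite family of parallel hyperplanes \dots modulo the lattice and the finitely many moduli''), and as written it does not go through. The repair is again the cube decomposition: there are finitely many cube types $\Delta_m$, hence finitely many possible local models around a point (indexed by the types of the at most $3^n$ adjacent unit cubes together with the face of the cube containing the point), and each such local model is \Ss-definable and therefore contributes only finitely many $\sim$-classes by the polyhedral case. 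You should make this the primary argument rather than a parenthetical alternative.
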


\subsection{Characterization in arbitrary relations}
\label{sec:caract-effectif}

Now we aim to characterize \Ss-definability for an arbitrary relation $X \subseteq \R^n$.
 We prove that the conditions (FSP),(DS),(RSP)  are sufficient, i.e.,  compared to  Theorem \ref{th:CNS} one can remove the condition ``$X$ is \Ls-definable" and add condition (RSP).

\begin{theorem}
\label{th:crit-n}
Let $n \geq 1$ and $X\subseteq \R^{n}$. Then $X$
is \Ss-definable if and only if it satisfies the three  conditions {\em (FSP), (DS), (RSP)}
\begin{description}
\itemsep=0.9pt
\item  {\em (FSP)} It has only finitely many  singular points.
\item  {\em  (DS)}  Every rational section of $X$ is \Ss-definable.
\item  {\em  (RSP)}  Every singular point has rational components.
\end{description}
\end{theorem}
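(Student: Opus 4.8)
The necessity of the three conditions is the easy direction, so I would dispatch it first. If $X$ is \Ss-definable, then by Theorem \ref{th:quantifier-elimination-for-R-plus} it is a finite Boolean combination of linear inequalities with integer coefficients, hence a finite union of open and closed polyhedra. For such a set, (FSP) follows because the singular points can only occur at the finitely many vertices of the arrangement of hyperplanes defining $X$; (DS) is immediate since a rational section is the intersection of $X$ with the \Ss-definable hyperplane $x_i=c$ (for $c\in\Q$), and the class of \Ss-definable relations is closed under intersection and under the coordinate projection $\pi_H$; and (RSP) holds because the vertices of a rational polyhedral arrangement have rational coordinates. I would state these three observations and refer back to the necessity arguments already cited for Theorem \ref{th:CNS}, since the \Ls-hypothesis plays no role in this direction.

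The substance is the sufficiency direction, and here the plan is to \emph{reduce to the already-proved Theorem \ref{th:CNS}} rather than to redo its proof. The earlier theorem establishes that (FSP) and (DS) suffice \emph{within} the class of \Ls-definable relations, but its proof really rests on the three structural properties collected in Proposition \ref{pr:recap}: (RSP), (FI) and (RB). My strategy is therefore to show that, for an \emph{arbitrary} $X$, the hypotheses (FSP), (DS), (RSP) already force (FI) and (RB) to hold, after which the machinery that yielded Theorem \ref{th:CNS} can be run verbatim to produce an explicit \Ss-formula for $X$. Concretely, I would argue by induction on the arity $n$. The base case $n=1$ is direct: a singular point of $X\subseteq\R$ is an isolated change in local type, (FSP) makes these finite, (RSP) makes them rational, and between consecutive singular points $X$ is locally constant in type, so $X$ is a finite union of rational points and open/closed rational intervals, visibly \Ss-definable.

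For the inductive step the key device is the relativization to hyperplanes developed in Section \ref{sec:useful}. Using Lemma \ref{le:general-projstrat} and Corollary \ref{cor:projstrat}, together with the translation dictionary for $\sim_{(X,H)}$ and $\Str_{(X,H)}$ under the projection $\pi_H$, I would show that the rational sections $X\cap H$ (which are \Ss-definable by (DS), hence by the inductive hypothesis have rational singular points, finite $\sim$-index, and rational bases for their strata) control the local structure of $X$ on a dense family of hyperplanes. The plan is then to propagate these properties off the rational hyperplanes: away from the finitely many singular points of $X$, every point acquires a nonzero stratum, and Lemma \ref{le:strx-subset-stry} and Lemma \ref{le:tech1} let me transport the $\sim$-type along strata into a neighbouring rational section whose type is already understood. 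This is where condition (RSP) is indispensable: because all singular points are rational, the singular locus is contained in the rational hyperplanes, so the transport never has to cross an uncontrolled singularity, and one obtains both that there are finitely many $\sim$-classes globally (FI) and that each nonsingular strata space is spanned by rational vectors (RB), exactly as in Proposition \ref{pr:recap} but now \emph{derived from the hypotheses} instead of from \Ls-definability.

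The main obstacle I anticipate is precisely this propagation step: in the \Ls-definable case one had local indistinguishability of \Ss- and \Ls-definable relations to guarantee that the finitely many local types recur in a uniform, rationally-parametrized way, and that crutch is gone. The delicate point is to verify that the rational sections glue into a globally finite set of neighbourhood types with rational strata, using only (FSP), (DS), (RSP) and the adjacency calculus of Lemmas \ref{le:congruence} and \ref{le:open-ter}; in particular one must rule out that an irrational direction could be a stratum at some nonsingular point even though every rational section looks fine. Once (FI) and (RB) are in hand, I would conclude by invoking the constructive half of the argument behind Theorem \ref{th:CNS}: finitely many $\sim$-classes, each with a rational-basis strata space and rational adjacency data, assemble into a finite Boolean combination of rational linear (in)equalities, which is an \Ss-formula for $X$.
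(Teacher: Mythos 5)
Your plan coincides with the paper's proof: necessity is dispatched exactly as you say via Theorem \ref{th:CNS} and Proposition \ref{pr:recap}, and sufficiency is obtained by first deriving (RB) and (FI) from (FSP), (DS), (RSP) --- using rational hyperplane sections, Corollary \ref{cor:projstrat}, Lemmas \ref{le:tech1} and \ref{le:tech2}, and the adjacency calculus of Lemmas \ref{le:congruence} and \ref{le:open-ter} --- and then re-running the formula construction by induction on the dimension of each $\sim$-class. The only substantive ingredient your sketch does not anticipate is the auxiliary notion of the \emph{isolated part} of a $\sim$-class, which the paper introduces because classes whose stratum lines never leave the class cannot be counted via adjacency to lower-dimensional classes and must instead be handled by projecting onto a coordinate hyperplane and inducting on the arity.
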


Observe that the  three conditions are needed, as shown by the following relations which are not \Ss-definable.
\begin{itemize}
\itemsep=0.9pt
\item Consider the binary relation $X=\{(x,x) \ | \ x \in \Z\}$. The singular elements of $X$ are precisely the elements of $X$, thus $X$ satisfies (RSP) but not (FSP). It satisfies (DS) because every rational section of $X$ is either empty or equal to the singleton $\{(x,x)\}$ for some $x \in \Z$, thus it is \Ss-definable.
\item The binary  relation $X=\R \times \Z$  has no singular point thus it satisfies (FSP) and (RSP). However it does not satisfy (DS) since, e.g., the rational section $\{0\} \times \Z$ is not \Ss-definable.
\item The unary relation $X=\{\sqrt{2}\}$  admits $\sqrt{2}$ as  unique singular point, thus it satisfies (FSP) but not (RSP). It satisfies (DS) since every rational section of $X$ is empty.
\end{itemize}

Now we prove Theorem \ref{th:crit-n}.

\begin{proof} The necessity of the first two conditions is a direct consequence of Theorem \ref{th:CNS}, that of the third condition is due to
Proposition~\ref{pr:recap}.

\medskip
Now we turn to the  other direction which is the bulk of the proof and
we proceed in two steps. First we show that properties  (FSP),   (DS) and  (RSP) imply properties (RB) and  (FI)
 (Claims \ref{cl:RB} and \ref{cl:FI})
 and then based on these two properties we show that there exists a \Ss-formula defining~$X$.

\begin{claim}
\label{cl:RB}
If $X$ satisfies conditions (FSP),   (DS) and  (RSP) then it satisfies condition (RB).
\end{claim}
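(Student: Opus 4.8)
The plan is to reduce the computation of $\text{Str}(x)$ at a nonsingular point $x$ to strata of suitable \Ss-definable \emph{rational sections}, for which property (RB) is already available through Proposition \ref{pr:recap}. Write $V = \text{Str}(x)$ and $d = \dim V \ge 1$; if $d = n$ then $V = \R^n$ has the standard rational basis, so assume $d < n$. For a coordinate $j$ let $H_j^c = \{z \in \R^n \mid z_j = c\}$, whose underlying (rational) vector space is $W_j = \{w \mid w_j = 0\}$. Since $\bigcap_{j} W_j = \{0\} \neq V$, there is some $j$ with $V \not\subseteq W_j$; fix such a $j$ together with a stratum $v \in V$ satisfying $v_j \neq 0$. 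Choosing a safe radius $r$ for $x$ and a rational $c$ close enough to $x_j$, the unique intersection point $x_c$ of $L_v(x)$ with $H_j^c$ lies in $B(x,r)$, so Lemma \ref{le:tech1} gives $x_c \sim x$; hence $x_c$ is nonsingular with $\text{Str}(x_c) = V$ by Remark \ref{re:sim-and-str}. Because $v \in V \setminus W_j$, Corollary \ref{cor:projstrat} applies and yields $\text{Str}_{(X,H_j^c)}(x_c) = V \cap W_j$. Finally, by condition (DS) the section $X \cap H_j^c$ is \Ss-definable, hence so is its projection $Y_c = \pi_{H_j^c}(X \cap H_j^c) \subseteq \R^{n-1}$ (it is obtained by substituting the rational constant $c$); the facts following Definition \ref{de:section} show that $\pi_{H_j^c}$ maps $\text{Str}_{(X,H_j^c)}(x_c)$ onto $\text{Str}_{Y_c}(p_c)$, where $p_c = \pi_{H_j^c}(x_c)$, and on $W_j$ this projection is a rational isomorphism onto $\R^{n-1}$.

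When $d \ge 2$ the subspace $V \cap W_j$ has dimension $d-1 \ge 1$, so $p_c$ is a nonsingular point of the \Ss-definable (hence \Ls-definable) relation $Y_c$. Proposition \ref{pr:recap} then guarantees that $Y_c$ satisfies (RB), so $\text{Str}_{Y_c}(p_c) = \pi_{H_j^c}(V \cap W_j)$ has a rational basis; pulling back through the rational isomorphism $\pi_{H_j^c}|_{W_j}$ shows that $V \cap W_j$ is a rational subspace. Running this argument for every coordinate $j$ in the support $J = \{j \mid V \not\subseteq W_j\}$, I obtain that each $V \cap W_j$ with $j \in J$ is rational of dimension $d-1$. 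Were all these intersections equal to a common $U$, then $U \subseteq W_j$ for every $j$ (also for $j \notin J$, where $V \subseteq W_j$), whence $U \subseteq \bigcap_j W_j = \{0\}$, contradicting $\dim U = d-1 \ge 1$. Thus there are $j, j' \in J$ with $V \cap W_j \ne V \cap W_{j'}$; both are rational of dimension $d-1$, so their sum is a rational subspace of dimension $d$ and therefore equals $V$. This establishes (RB) when $d \ge 2$.

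The remaining case $d = 1$ is the main obstacle, since then $V \cap W_j = \{0\}$ carries no information and one must pin down the single, a priori irrational, direction $v$. The key observation is that now $\text{Str}_{Y_c}(p_c) = \pi_{H_j^c}(V \cap W_j) = \{0\}$, so $p_c$ is a \emph{singular} point of the \Ss-definable relation $Y_c$; by Proposition \ref{pr:recap} (condition (RSP) applied to $Y_c$) it has rational components, and since $x_c$ is recovered from $p_c$ by inserting the rational value $c$ into coordinate $j$, the point $x_c$ itself is rational. Choosing two distinct rationals $c \ne c'$ close to $x_j$ produces two rational points $x_c, x_{c'}$ on the line $L_v(x)$, distinct because $v_j \neq 0$ and both $\sim x$ by Lemma \ref{le:tech1}. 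Their difference $x_{c'} - x_c$ is then a nonzero rational vector lying in $\text{Str}(x) = V$, so $V = \R\,(x_{c'} - x_c)$ admits a rational basis. Conceptually the whole difficulty is concentrated in this $d=1$ step, where the ``project onto a rational section and land on one of its rational singular points'' trick replaces the local-indistinguishability arguments used for \Ls-definable relations in \cite{BC2020}; the case $d \ge 2$ is a reduction to \Ss-definable sections through intersections with coordinate hyperplanes.
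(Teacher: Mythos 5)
Your proof is correct and follows essentially the same route as the paper's: reduce to rational coordinate-hyperplane sections via Lemma \ref{le:tech1} and Corollary \ref{cor:projstrat}, settle $d=1$ by locating two distinct rational singular points of \Ss-definable sections on the line $L_v(x)$ and taking their difference, and settle $d\geq 2$ by showing that the rational $(d-1)$-dimensional subspaces $V\cap H_j$ (for the coordinates $j$ with $V\not\subseteq H_j$) cannot all coincide and hence span $V$. The only cosmetic difference is that the paper establishes $V=\sum_{i\in I}(V\cap H_i)$ in full, whereas you observe that two distinct such intersections already suffice; the underlying dimension count is identical.
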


\begin{proof}
We prove that for every non-singular point $x \in \R^n$, $\Str(x)$ has a rational basis.   If $n=1$ this   follows from the fact that for every $x \in \R$ the set $\Str(x)$ is either
equal to $\{0\}$  or equal to $\R$, thus we assume  $n \geq 2$.

For every $i\in \{1, \ldots, n\}$ let $H_{i}=\{(x_{1}, \ldots, x_{n})\in \R^{n}\mid  x_{i}=0\}$.
Let us call \emph{rational {$i-$hyperplane}} any hyperplane $S$ of the form $S=\{(x_{1}, \ldots, x_{n})\in \R^{n}\mid  x_{i}=c\}$ where $c \in \Q$.
The underlying vector space  of $S$ is $H_i$.

Let $x$ be a $d-$point with $d\geq 1$, i.e., a point for which   $V=\Str(x)$ has dimension $d$. For $d=n$ the result is obvious. For $1 \leq d <n$ we prove the result by induction on $d$.

\nl \underline{Case $d=1$:} It suffices to show that every $1-$point $x$ has a stratum in $\Q^n$. Let  $v \in \Str(x)\setminus\{0\}$, and let $r>0$ be safe for $x$.  We can find $i \in \{1,\dots,n\}$ and two distinct rational $i-$hyperplanes $S_1$ and $S_2$, not parallel to $v$,  such that $L_v(x)$ intersects $S_1$ (resp. $S_2$) inside $B(x,r)$, say at some point $y_1$ (resp. $y_2$). By Lemma \ref{le:tech1} we have $y_1 \sim x$. By Corollary \ref{cor:projstrat} it follows that
$$\text{Str}_{(X,S_1)}(y_1)=\text{Str}_X(y_1) \cap H_i=\text{Str}_X(x) \cap H_i$$
and the rightmost expression is reduced to $\{0\}$ since $d=1$ and $v \not\in H_i$. This implies that  $y_1$ is $(X,S_1)-$singular, i.e.,  that $\pi_{S_1}(y_1)$ is $\pi_{S_1}(X)-$singular. Similarly $y_2$ is $(X,S_2)-$singular, i.e.,  $\pi_{S_2}(y_2)$ is $\pi_{S_2}(X)-$singular.

\medskip
By condition (DS)  the rational sections $X \cap S_1$ (resp. $X \cap S_2$) are \Ss-definable, thus the   $(n-1)-$ary relations $\pi_{S_1}(X)$ (resp. $\pi_{S_2}(X)$) are also \Ss-definable, and by our hypothesis (RSP) this implies that $\pi_S(y_1)$ (resp. $\pi_S(y_2)$) has rational components. Thus the same holds for $y_1$ and $y_2$, and also for $y_1-y_2$, and the result follows from the fact that $y_1-y_2 \in \text{Str}_{X}(x)$.

\nl\underline{Case $2 \leq d<n$:}
Let $I \subseteq \{1,\dots,n\}$ denote the set of indices $i$ such that $V \not\subseteq H_i$. We have $V \subseteq \bigcap_{i \in \{1,\dots,n\}\setminus I} H_i$ thus $\dim(V) \leq n-(n-|I|)=|I|$, and it follows from our assumption $\dim(V)=d\geq 2$ that $|I|\geq 2$.

\medskip
Now we prove that $V= \sum_{i \in I} (V \cap H_i)$. It suffices to prove $V \subseteq  \sum_{i \in I} (V \cap H_i)$, and this in turn amounts to prove that $\dim(\sum_{i \in I} (V \cap H_i))=d$. For every $1 \leq i \leq n$ we have
$$\dim(V+H_i)=\dim(V)+\dim(H_i)-\dim(V \cap H_i).$$
Now if $i \in I$ then $\dim(V+H_i)>\dim(H_i)$, i.e., $\dim(V+H_i)=n$, which leads to $\dim(V \cap H_i)=d+(n-1)-n=d-1$. Thus, in order to prove $\dim(\sum_{i \in I} (V \cap H_i))=d$ it suffices to show that there exist $i,j \in I$ such that $V \cap H_i \ne V \cap H_j$. Assume for a contradiction that for all $i,j \in I$ we have $V \cap H_i = V \cap H_j$. Then for every $i \in I$ we have
$$V \cap H_i= V \cap \bigcap_{j \in I}{H_j} \subseteq  \bigcap_{j \not\in I}{H_j} \cap \bigcap_{j \in I} H_j=\{0\}$$
which contradicts the fact that $\dim(V \cap H_i)=d-1 \geq 1$.

\medskip
We proved that $V= \sum_{i \in I} (V \cap H_i)$, thus it suffices to prove that for every $i \in I$, $V \cap H_i$ has a rational basis. Let $v$ be an element of  $V \setminus H_i$, and let $r$ be safe for $x$. We can find a rational $i-$hyperplane $S$ not parallel to $v$ and such that the intersection point of  $S$ and $L_v(x)$, say $y$, belongs to $B(x,r)$. By Lemma \ref{le:tech1} (applied to $z=x$) we have
$y \sim x$. Corollary \ref{cor:projstrat} then implies
$$\text{Str}_{(X ,S)}(y)=\text{Str}_X(y) \cap H_i=\text{Str}_X(x) \cap H_i=V \cap H_i$$
which yields
$$
\text{Str}_{\pi_S(X)}(y)=\pi_S(V \cap H_i).
$$
Now by condition (DS), $X \cap S$ is \Ss-definable, and $\pi_S(X)$ as well. Therefore by Proposition \ref{pr:recap} applied to $X \cap S$, the relation $X \cap S$ satisfies (RB) thus $\pi_S(V \cap H_i)$ has a rational basis, and this implies that $V \cap H_i$ also has a rational basis.
\end{proof}

\begin{claim}
\label{cl:FI}
If $X$ satisfies conditions (FSP),   (DS) and  (RSP) then it satisfies condition (FI).
\end{claim}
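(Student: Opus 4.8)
The plan is to argue by induction on $n$. For the base case $n=1$, every $x\in\R$ with $\text{Str}(x)\ne\{0\}$ must satisfy $\text{Str}(x)=\R$, which forces $x$ to be interior to $X$ or interior to its complement; any two points interior to $X$ are $\sim$-equivalent (a common safe radius gives a translation bijection between two full balls), and likewise for the complement, so the nonsingular points fall into at most two classes. Together with the finitely many singular points supplied by (FSP) this gives finite index.

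For the inductive step I would split $\R^n$ into three parts. The points with $\text{Str}(x)=\R^n$ are again interior to $X$ or to its complement, contributing at most two classes. The singular points form a finite set by (FSP). It remains to bound the classes of nonsingular points $x$ with $\{0\}\ne\text{Str}(x)\ne\R^n$. For such an $x$, Claim \ref{cl:RB} furnishes a rational stratum $v\in\text{Str}(x)$; choosing a coordinate $i$ with $v_i\ne 0$, the line $L_v(x)$ is transverse to the rational hyperplanes $S_c=\{x_i=c\}$, so for a suitable rational $c$ near $x_i$ the point $y=L_v(x)\cap S_c$ lies in a safe ball around $x$ and hence $x\sim y$ by Lemma \ref{le:tech1}. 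Thus every such point is $\sim$-equivalent to a point on some rational hyperplane $S_c$. On each fixed $S_c$, condition (DS) makes $X\cap S_c$, and therefore $\pi_{S_c}(X)$, an \Ss-definable relation; since an \Ss-definable relation is \Ls-definable, Proposition \ref{pr:recap} applied to $\pi_{S_c}(X)$ together with the section correspondence ($x\sim_{(X,S_c)}y \Leftrightarrow \pi_{S_c}(x)\sim_{\pi_{S_c}(X)}\pi_{S_c}(y)$) shows that $\sim_{(X,S_c)}$ has finite index on $S_c$. Moreover the representative $y$ retains the transverse stratum $v$, so by Lemma \ref{le:tech2} its $\sim_{(X,S_c)}$-class already pins down its $\sim_X$-class.

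The heart of the matter, and the step I expect to be the main obstacle, is that this reduction invokes the infinite family of parallel rational hyperplanes $\{x_i=c\}_{c\in\Q}$, so finite index on each single section does not by itself bound the total number of classes. To overcome this I would show that only finitely many section types actually arise. By (FSP) and (RSP) the finitely many singular points have finitely many, and rational, $i$-th coordinates; these cut the $x_i$-axis into finitely many open intervals. Over such an interval the corresponding slab contains no singular point, and I would argue that the structure is uniform along the $x_i$-direction: sliding a point in the slab along a transverse rational stratum (via Lemma \ref{le:tech1}, with the segment structure of Lemma \ref{le:open-ter}) either drives it to one of the finitely many critical sections sitting at a singular level, or the stratum segment is infinite and the class is determined by its finite $\sim_{(X,S_{c_0})}$-type at a single rational level $c_0$ inside the slab. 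Either way the set of $\sim_X$-classes meeting the slab is finite. Summing over the finitely many coordinates $i$, the finitely many critical sections (each \Ss-definable, hence of finite index), and the finitely many slabs, the nonsingular non-bulk points comprise only finitely many classes.

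Combining the three parts, $\sim_X$ has finite index, and the finiteness of the family of subspaces $\text{Str}(x)$ then follows immediately from Remark \ref{re:sim-and-str}, since $x\sim y$ implies $\text{Str}(x)=\text{Str}(y)$. The delicate point throughout is precisely the control across parallel sections carried out in the third paragraph; the remaining arguments are a routine assembly of the already established facts on strata, sections, and adjacency.
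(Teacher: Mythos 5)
Your reduction of the problem to the infinite family of parallel rational sections is the right diagnosis of the difficulty, but the slab argument you propose to resolve it does not work, for two reasons. First, the dichotomy in your third paragraph is false: when a stratum-segment $L_v(x)\cap{\mathcal Cl}(x)$ is finite in some direction, Lemma \ref{le:open-ter} only says that its endpoint lies in a class $F$ of \emph{strictly smaller dimension} than ${\mathcal Cl}(x)$; for $n\geq 3$ such an $F$ is typically nonsingular and sits at an arbitrary level $x_i=c$, not at the $i$-th coordinate of any singular point. So the ``critical sections at singular levels'' do not capture where the segments terminate, and a slab free of singular points can still be crossed by lower-dimensional classes at uncontrolled levels. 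Second, even in the ``infinite segment'' branch, your claim that a class meeting the slab is represented at a single fixed level $c_0$ is unjustified: Lemma \ref{le:tech1} only transports $\sim$-equivalence within a \emph{safe ball}, and safe radii can shrink to $0$ as $x$ varies, so the rational level $c$ reached from $x$ genuinely depends on $x$ and you are back to infinitely many sections. A further (fixable but real) issue is your use of Lemma \ref{le:tech2}: it requires the two points being compared to share a \emph{common} transverse stratum $v$, whereas two points in the same $\sim_{(X,S_c)}$-class may each be transverse to $S_c$ only via different strata; one must arrange a common $v$ (the paper does this by taking $v\in\Str(z)$ for an accumulation point $z$ and invoking Lemma \ref{le:strx-subset-stry}).

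The paper's proof takes a different route precisely to avoid these traps. It first isolates the classes whose stratum-lines never leave the class (the ``isolated parts''), showing by induction on $n$ that there are finitely many of them and that each is a finite union of rational affine subspaces. For the remaining classes it runs a \emph{descending induction on the dimension $d$ of the class}: by Lemma \ref{le:open-ter} every non-isolated $d$-class is adjacent to some class of strictly smaller dimension, of which there are finitely many by induction, so it suffices to show that a fixed lower-dimensional class $F$ is adjacent to only finitely many $d$-classes. That last counting step is done \emph{locally}: one picks a single point of $F$ (or an accumulation point of representatives), a single well-chosen rational hyperplane $S$ nearby, and a common stratum along which to project, and then derives a contradiction with property (FI) of the single \Ss-definable section $X\cap S$ given by (DS) and Proposition \ref{pr:recap}. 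This localization around one point is what replaces your global slab uniformity and eliminates the dependence on infinitely many parallel sections. As it stands, your third paragraph is the heart of the proof and it contains a genuine gap.
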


\begin{proof}
Before proving the claim we need a simple definition.

\begin{definition}
	Given $X \subseteq \R^n$ and a $\sim$-class $E$, we define the \emph{isolated part of $E$} as the subset
	$$Z=\{x \in E \ | \  L_{v}(x)\subseteq E \text{ for all nonzero vectors } v\in \Str(E)\}.$$

	A subset of $\R^n$ is $X$-\emph{isolated} (or simply \emph{isolated} when $X$ is understood) if it is equal to the isolated part of some $\sim$-class.
\end{definition}

{
\begin{example}
	Let $X \subseteq \R^2$ be defined as $X=L_1 \cup L_2$ where $L_1$ denotes the horizontal axis and $L_2$ denotes the open half-line $L_2=\{(x_1,x_2) \ | \ x_2= 1 \text{ and } x_1>0 \}$. In this case there are three $\sim$-classes, namely $E_1=X$, $E_2=\{(0,1)\}$ and $E_3=\R^2\setminus (E_1 \cup E_2)$. Let us describe the isolated part for each of these $\sim$-classes. A point belongs to the isolated part of a  $\sim$-class if whatever stratum is chosen, all points in the direction are trapped in the class. For instance for the $\sim$-class $E_1=X$,  we can show that the isolated part is obtained by deletion of the half-line $L_2$ of $X$, whose points are clearly not trapped. Indeed the subspace $\Str(E_1)$ is generated by the vector $(1,0)$. Therefore for every $v \in \Str(E_1)$, if $x \in L_1$ then $L_v(x)=L_1 \subseteq E_1$, and if $x \in L_2$ then the line $L_v(x)$ intersects $E_2$ thus $L_v(x) \not\subseteq E_1$. This shows the isolated part of $E_1$ is equal to $L_1$.  The $\sim$-class $E_2$ has dimension $0$ thus obviously it is equal to its isolated part. Finally the isolated part of $E_3$ is empty since the vector $v=(0,1)$ is a stratum of $E_3$ and for every $x \in E_3$ the line $L_{v}(x)$ intersects $E_1$.
\end{example}
}

 \begin{lemma}
\label{le:isolated-classes}
 Let $X\subseteq \R^n$ satisfy (FSP),  (DS) and (RSP). We have
 \begin{enumerate}
 \item Let $E$  be a $\sim$-class and $Z$ be its isolated part.
Then $Z$  is a {finite} union of affine subspaces with underlying vector subspace $\Str(E)$ each containing a point with rational components.

 \item There exist finitely many  isolated subsets.
 \end{enumerate}
\end{lemma}

\begin{proof}
By induction on $n$. For $n=1$ if $X$ is equal to $\R$ or to the empty set,
the only isolated set is $X$ and it obviously satisfies $(1)$.  Otherwise a nonempty isolated set $Z$
consists of equivalent points of a $\sim$-class of dimension $0$, i.e., it  is a union of singular points. Now  by (FSP) and (DS) there exist finitely many such points and they have rational components, which  implies  $(1)$ and $(2)$.

Now let $n \geq 1$. All isolated sets $Z$ included in a $\sim$-class $E$ of dimension $0$  satisfy $(1)$, and moreover there are finitely many such sets $Z$. Thus  it suffices to consider the case where $Z \ne \emptyset$ and $\Str(E) \ne \{0\}$.

Let $v \in \Str(E) \setminus\{0\}$ and let $i\in \{1,\ldots, n\}$ be such that $v \not\in H_i$. For every $z \in Z$ we have $L_v(z) \subseteq Z$, thus $Z$ intersects the hyperplane $H_{i}$.
All elements of $Z \cap H_i$ are $\sim_X$-equivalent thus they are also $\sim_{(X,H_i)}$-equivalent.
 Furthermore  for every $x\in Z\cap H_{i}$ we have $\text{Str}_{(X,H_i)}(x)=\text{Str}_{X}(x)\cap H_{i}$ by Corollary \ref{cor:projstrat}
 and thus for every $w\in \text{Str}_{X}(x)\cap H_{i}$ we have $L_w(x) \subseteq Z\cap H_{i}$. This shows that $\pi_{H_i}(x)$ belongs to a $\pi_{H_i}(X)-$isolated set, hence $\pi_{H_i}(Z)$ is included in a
$\pi_{H_i}(X)-$isolated set,  say $W\subseteq \pi_{H_i}(H_i)$.

Now by condition (DS) the set $\pi_{H_i}(X)$ is \Ss-definable, thus by Theorem \ref{th:CNS} it satisfies also (FSP). By our induction hypothesis it follows that $W$ can be written as $W=\bigcup^{p}_{j=1} W_{j}$, where  either all $W_{j}$'s are  parallel affine subspaces with underlying vector space  $\pi_{H_i}(\Str(E))$ each containing some point with rational components (by $(1)$), or each $W_j$ is reduced to a point with rational components (by $(1)$).
Every $W_j$ which intersects $\pi_{H_i}(Z)$ satisfies $W_j \subseteq \pi_{H_i}(Z)$, which shows that $\pi_{H_i}(Z)=  \bigcup_{j \in J} W_{j}$ for some $J \subseteq \{1,\dots,p\}$. That is, we have $Z \cap H_i= \bigcup_{j \in J} W'_{j}$ where each $W'_j=\pi^{-1}_{H_i}(W_j)$. Observe that if $x$ belongs to $W_{j}$ and has rational components then the point $x'=\pi^{-1}_{H_i}(x)$ also has rational components.
Now $Z=(Z \cap H_i)+\Str(E)$ thus $Z= \bigcup_{j \in J} (W'_{j}+\Str(E))$. Since the  underlying vector space of each $W'_j$ is included in $\Str(E)$, this  proves $(1)$.

Concerning $(2)$ we observe that $Z$ is completely determined by $Z \cap H_i$, i.e., $\pi_{H_i}(Z)$. By our induction hypothesis there are finitely many $\pi_{H_i}(X)-$isolated parts $W=\bigcup^{p}_{j=1} W_{j}$   and  each $X$-isolated part is determined by a subset of the form
$\bigcup_{j \in J} W_{j}$ for some $J \subseteq \{1,\dots,p\}$. This proves point $(2)$.
\end{proof}

Now we turn to the proof of Claim \ref{cl:FI}.
 Lemma \ref{le:isolated-classes}  shows that the number of $\sim$-classes having a nonempty isolated part is finite.
It thus suffices to prove that for every $0 \leq d \leq n$ there exist finitely many $d$-classes $E$ having an empty isolated part.

 For $d=0$ the result follows from $(FSP)$ and the fact that each $0-$class is a union of singular points. For $d=n$ there exist at most two $d-$classes, which correspond to elements in the interior of $X$ or the interior of its complement.

For $0 \leq d <n$ we reason by induction on $d$. Observe first that if a $\sim$-class $E$ has dimension $d$ and has an empty isolated part, then there exist $x \in E$ and $v \in \Str(E)\setminus\{0\}$ such that $L_v(x) \not\subseteq E$. By Lemma \ref{le:open-ter} this implies that there exist $y \in L_v(x)$ and a   $\sim$-class $F$ such that  $y\in F$,  $F$ is adjacent to $E$, $\dim(F)<\dim(E)$, and  $[x,y)\subseteq {\mathcal Cl}(x)$.  Now by our induction hypothesis there exist finitely many $\sim$-classes with dimension less than $d$. Thus in order to prove the claim, it suffices to show that there are finitely many $d$-classes to which some $d'$-class with  $d'<d$ is adjacent.

In order to meet a contradiction, assume that there exists a  $d'-$class $F$  which is adjacent to infinitely many $d$-classes, say $E_{j}$ with $j\in J$. We may furthermore assume that for each class $E_{j}$ there is no integer $d'<d''<d$ such that some $d''$-class is adjacent to  $E_{j}$.
Because of Lemma \ref{le:congruence} it is enough to fix an element $y$ in $F$ and investigate the classes to which it is adjacent.

\nl We first consider the case $d'=0$.

 Because of condition (FSP), for some real $s>0$ the point $y$ is the unique singular point in $B(y,s)$. Moreover for every $j \in J$, $F$ is adjacent to $E_j$, thus there exists a point $x_{j}\in E_{j}$ such that $[x_j,y) \subseteq E_j$. Let $\mathit{HL}_{j}$ denote the open halfline with endpoint $y$ and containing $x_j$.
Observe that we necessarily have $\mathit{HL}_{j}\cap B(y,s)\subseteq {\mathcal Cl}(x_{j})$. Indeed, by Lemma \ref{le:open-ter} the condition
$\mathit{HL}_{j}\cap B(y,s)\subsetneq {\mathcal Cl}(x_{j})$  implies that  there exists a point   $z=y + \alpha (x_{j}-y) \in B(y,s)$ such that $\alpha >1$ and $\text{dim}(z)< d$. Since $y$ is the unique singular point in $B(y,s)$ this implies $\text{dim}(z)>0$ but then because of $[x_{j},z)\subseteq  {\mathcal Cl}(x_{j})$  the maximality  condition stipulated for $d'$ is violated.

Let $z_{j}$ be the point on  $\mathit{HL}_{j}$ at distance $\frac{s}{2}$ from $y$ and let $z$ be adherent to the set $\{z_j \ | \ j \in J\}$. The
point  $z$ is nonsingular since $y$ is the unique singular point in the ball $B(y,s)$. Let $v \in \Str(z)\setminus\{0\}$.
Consider
 some $\ell \in \{1, \ldots, n\}$, some  rational  $\ell -$hyperplane $S$ such that $z \not\in S$ and some real $0<t<\frac{s}{2}$ such that $L_{v}(B(z,t))\cap S\subseteq B(z,\frac{s}{2})$. The ball $B(z,t)$ contains infinitely many non $\sim$-equivalent points, and by Lemma   \ref{le:tech2}  their projections on $S$ in the direction $v$ are
 non $\sim_{(X,S)}$-equivalent.
  But by condition  (DS) the relation $X\cap S$ is \Ss-definable, thus $\pi_S(X)$ satisfies condition (FI) of Proposition \ref{pr:recap}, a contradiction.

\nl  Now we consider the case where $d'>0$.

Choose some  $v \in \Str(y)$ and let $r$ be a safe radius for $y$. We can find $0<s<r$, $k \in \{1,\dots,n\}$ and some $k-$hyperplane $S$ not parallel to $v$ such that
$L_v(B(y,s))\cap S \subseteq B(y,r)$.
By definition of $y$, $B(y,s)$ intersects infinitely many pairwise distinct $d-$classes. Given two non $\sim$-equivalent $d$-points $z_1,z_2 \in B(y,s)$, and   their respective projections $w_1,w_2$ over $S$  along the direction $v$, we have $w_1 \not\sim_{(X,S)} w_2$ by Lemma \ref{le:tech2}.
This implies that there exist infinitely many $\sim_{(X,S)}$-classes. However by condition  (DS), the relation $X \cap S$ is \Ss-definable,
 thus $\pi_S(X)$ satisfies condition (FI) of Proposition \ref{pr:recap}, a contradiction.
\end{proof}

Now we turn to the proof of Theorem  \ref{th:crit-n}.
Observe that $X$ is equal to the union of $\sim$-classes of its elements, thus by Claim \ref{cl:FI}, in order to prove that $X$ is \Ss-definable it suffices to prove that all $\sim_X$-classes are \Ss-definable.
More precisely, we prove that each $\sim$-class $E$ is definable from $\sim$-classes $F$ with smaller dimension, i.e., that $E$ is definable in the expansion of \Ss\ obtained by adding a predicate for each such $F$. We proceed by induction on the dimension $d$ of $\Str(E)$.

If $d=0$ then $E$ is a union of singular points, and by (FSP) and (RSP) it follows that $E$ is a finite subset of $\Q^n$ thus is \Ss-definable.

Assume now $0<d\leq n$. By Claim \ref{cl:RB} there exists a rational basis $V(E)=\{v_1,\dots,v_d\}$ of $\Str(E)$. Let $Z \subseteq E$ be the isolated part of $E$ and let $E'=E \setminus Z$. By Lemma \ref{le:isolated-classes} $(1)$, $Z$ is a {finite} union of parallel affine subspaces with  underlying vector space $V(E)$ each containing a point with rational components, thus $Z$ is \Ss-definable.
It remains to prove that $E'$ is \Ss-definable. We use the following characterization of $E'$.

\begin{lemma}\label{le:non-isolated}
For every $x \in \R^n$, we have $x \in E'$ if and only if there exist $1 \leq p \leq d$ and a sequence of pairwise distinct elements $x_0,\dots,x_p \in \R^n$ such that $x_0=x$ and
\begin{enumerate}
\item for every $0\leq k \leq p-1$, $x_{k+1}-x_k\in V(E)$  and $[x_k,x_{k+1})$ does not intersect any $\sim$-class of strictly smaller dimension
than $\dim(E)$
\item if  $F= {\mathcal Cl}(x_{p})$ then $F$ is $(x_{p-1}-x_{p})$-adjacent to $E$ and  $\dim(F)<\dim(E)$.
\end{enumerate}
\end{lemma}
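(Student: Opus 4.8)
The plan is to first rewrite $E'$ in a form that exposes a translation invariance, and then treat the two implications separately. Directly from the definition of the isolated part $Z$, and since $\bigcup_{v\in\Str(E)\setminus\{0\}}L_v(x)=x+\Str(E)$, one has $x\in Z$ exactly when $x+\Str(E)\subseteq E$, so that
\[
E'=\{x\in E \ : \ x+\Str(E)\not\subseteq E\}.
\]
The engine of the whole argument is the resulting invariance: if $x,x'\in E$ and $x'-x\in\Str(E)$, then $x+\Str(E)=x'+\Str(E)$, whence $x\in E'$ if and only if $x'\in E'$. Note that $E$ is nonsingular since $d=\dim E\ge 1$, so Lemma \ref{le:open-ter} applies to every point of $E$ along every $v\in\Str(E)\setminus\{0\}$. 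I read the condition $x_{k+1}-x_k\in V(E)$ as saying that each step is a nonzero multiple of one of the basis vectors $v_i$; in particular every step lies in $\Str(E)$ and points in a rational direction, which is what will eventually make the defining formula an \Ss-formula.

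For the implication $(\Leftarrow)$ I would argue in two moves. First, by backward induction on $k$ from $p-1$ down to $0$, I show $[x_k,x_{k+1})\subseteq E$, so in particular $x=x_0$ and all $x_k$ with $k\le p-1$ lie in $E$. In the base case $k=p-1$, set $v=x_p-x_{p-1}$: the adjacency hypothesis gives $x_p-\alpha v\in E$ for all small $\alpha>0$, so on $L_v(x_{p-1})$ the point $x_p$ is the finite endpoint of the open $E$-segment accumulating at it from the $x_{p-1}$ side (Lemma \ref{le:open-ter}); since $[x_{p-1},x_p)$ meets no $\sim$-class of dimension $<\dim E$, and every finite endpoint of such a segment lies in precisely such a class, the opposite endpoint cannot fall inside $[x_{p-1},x_p)$, forcing $[x_{p-1},x_p)$ into that open segment, i.e.\ into $E$. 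The inductive step is identical: knowing $x_{k+1}\in E$, the line $L_{x_{k+1}-x_k}(x_k)$ meets $E$ at $x_{k+1}$, and the no-lower-class hypothesis on $[x_k,x_{k+1})$ again pushes this half-open segment into the open $E$-segment through $x_{k+1}$. Second, since $x_0,\dots,x_{p-1}$ all lie in $E$ and pairwise differ by elements of $\Str(E)$, the invariance above shows they share the same membership in $E'$; and $x_{p-1}\in E'$ because $x_{p-1}+\Str(E)$ contains $x_p\notin E$. Hence $x=x_0\in E'$.

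For the implication $(\Rightarrow)$, which is the constructive and harder half, I would build the chain by a coordinate-fixing walk. As $x\in E'$, choose $q\in(x+\Str(E))\setminus E$ and write $q=x+\sum_{i=1}^d t_i v_i$. Processing $v_1,\dots,v_d$ in turn, I keep a current point $y$ (initially $x$) in $E$ differing from $x$ only in already-processed coordinates; at stage $i$ I test $L_{v_i}(y)$. If $L_{v_i}(y)\subseteq E$ I reposition to $y+t_iv_i$, a step along $v_i$ whose segment lies in that line, hence in $E$, so it meets no smaller class; if instead $L_{v_i}(y)\not\subseteq E$ I stop, taking $x_p$ to be the finite endpoint of the open $E$-segment of $L_{v_i}(y)$ through $y$ provided by Lemma \ref{le:open-ter}, which lies in a class $F$ with $\dim F<\dim E$ that is $(x_{p-1}-x_p)$-adjacent to $E$ and satisfies $[y,x_p)\subseteq E$, giving conditions $(1)$ and $(2)$. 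The walk stops within $d$ steps, for if no earlier stop occurs then at stage $d$ the current point is $x+\sum_{i<d}t_iv_i$ and the line $L_{v_d}$ through it contains $q\notin E$, triggering the stopping case; discarding the stages with $t_i=0$ keeps the points pairwise distinct (repositioned points differ in a coordinate, and $x_p\in F\ne E$) and yields $1\le p\le d$. I expect this direction to be the main obstacle, and within it the two delicate points are exactly the ones handled by the construction: keeping each intermediate segment free of lower-dimensional classes, which is why repositioning is only ever performed along basis lines entirely contained in $E$ and why only the final segment is permitted to touch a smaller class; and the termination bound $p\le d$, secured by fixing one coordinate per repositioning so that the $d$-th basis line is forced to leave $E$. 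Notably, the apparent difficulty of analysing the boundary of $E$ never arises, because each exit occurs along a single stratum direction and Lemma \ref{le:open-ter} hands us the exit point together with its lower dimension and its adjacency to $E$.
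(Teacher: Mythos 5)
Your proof is correct. The $(\Leftarrow)$ half is essentially the paper's: the same backward induction showing $[x_k,x_{k+1})\subseteq E$ via Lemma \ref{le:open-ter}, with the adjacency hypothesis anchoring the last segment; your reformulation $E'=\{x\in E \mid x+\Str(E)\not\subseteq E\}$ and the translation invariance merely make explicit what the paper uses when it deduces $x\in E'$ from $x_p-x\in\Str(E)$ and $\dim(F)<\dim(E)$. (Your reading of ``$x_{k+1}-x_k\in V(E)$'' as ``nonzero multiple of a basis vector'' matches the paper's intent, since its own construction produces steps of the form $\alpha_{k+1}v_{i_{k+1}}$.) The $(\Rightarrow)$ half is where you genuinely diverge. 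The paper first locates an exit point $y$ on a single line $L_v(x)$ for an arbitrary $v\in\Str(E)$ (so that $[x,y)\subseteq E$ and $y\notin E$), writes $y-x=\alpha_1v_{i_1}+\cdots+\alpha_pv_{i_p}$, takes the $x_k$ to be the partial sums, and invokes a double minimality (of $p$, then of $\alpha_p$) to argue that the rectified segments meet no class of smaller dimension. Your greedy coordinate-by-coordinate walk dispenses with that minimality argument: intermediate repositioning happens only along basis lines wholly contained in $E$, so condition $(1)$ is automatic for those segments; the stopping case is handled directly by Lemma \ref{le:open-ter}, which supplies $\dim(F)<\dim(E)$ and the adjacency of condition $(2)$; and termination within $d$ stages is forced by the chosen witness $q\notin E$. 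What the paper's route buys is that the chain rectifies one prescribed line through $x$; what yours buys is robustness, since you never need a minimizer to exist nor need to argue that minimality keeps the intermediate segments clean --- precisely the step the paper leaves terse. Both constructions yield the same bound $p\le d$.
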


\begin{proof}
We first prove that the conditions are sufficient.  We prove by backward induction  that $[x_k,x_{k+1}) \subseteq E$ for every $0\leq k \leq p-1$. This will imply that $x=x_0 \in E$, and the fact that $x_p-x$ belongs to $\Str(E)$ and $\dim(F)<\dim(E)$ will lead to $x \in E'$. Set $k=p-1$.
By Point 2 of Lemma \ref{le:congruence}   the element   $x_{p}$ is  $(x_{p-1}- x_{p})$-adjacent to $E$, thus $[x_{p-1},x_{p})$  intersects
$E$. Moreover $[x_{p-1},x_{p})$ does not intersect any $\sim$-class $G$ such that $\dim(G)<\dim(E)$, thus by Lemma  \ref{le:open-ter} we have $[x_{p-1},x_{p}) \subseteq E$. For $0\leq k<p-1$, by our induction hypothesis we have $x_{k+1} \in E$. Moreover $[x_{k},x_{k+1})$ does not intersect any $\sim$-class $G$ such that $\dim(G)<\dim(E)$, thus $[x_{k},x_{k+1}) \subseteq E$  again by Lemma  \ref{le:open-ter}.

We prove the necessity. By definition of $E'$ and Lemma \ref{le:open-ter}  there exist $v\in \Str{(E)}$  and $y\in L_v(x)$ such that
 $[x,y)\subseteq E$ and $y\not\in E$. Decompose $v=\alpha_{1} v_{i_{1}} + \cdots + \alpha_{p} v_{i_{p}}$ where
 $0<i_{1}< \cdots <   i_{p}\leq d$ and $\alpha_{1} , \cdots,   \alpha_{p} \not= 0$. We can assume
  w.l.o.g that $y$ is chosen such that $p$ is
 minimal and furthermore that $\alpha_{p}$ is minimal too.
  For $0 \leq k<p$ set $x_{k}=x + \alpha_{1}  v_{i_{1}} + \cdots + \alpha_{k}v_{i_{k}}$. By minimality of $p$ and $\alpha_p$, the segments
 $[x_{0}, x_{1}), \ldots, [x_{p-1}, x_{p})$ intersect no class of dimension less than $\dim(E)$.
 Then $y=x_{p}$ is $(x_{p-1}-x_{p})$-adjacent to $E$.
  \end{proof}

In order to prove that $E'$ is \Ss-definable it suffices to show that we can express in \Ss\ the existence of a sequence $x_0,\dots,x_p \in \R^n$ which satisfies both conditions of Lemma \ref{le:non-isolated}. Observe that  $V(E)$ is finite  and each of its element is \Ss-definable, thus we can express in \Ss\ the fact that a segment is parallel to some element of $V(E)$. Moreover by (FI) there exist finitely many  $\sim$-classes $F$ such that $\dim(F)<\dim(E)$, and all such classes are \Ss-definable by our induction hypothesis. This allows us to express condition $(1)$ in \Ss. For $(2)$ we use again the fact that there are only finitely many classes $F$ to consider and that all of them are \Ss-definable.
\end{proof}

\subsection{An alternative noneffective formulation.}
 \label{subsec:alternative}

  In this section we re-formulate Theorem \ref{th:crit-n} in terms of (generalized) projections of $X$
  by building on  the notion of generalized section which extends that of
section, in the sense that it  allows us  to fix several components.

\begin{definition}
\label{de:generalized-section} Given $n \geq 1$ and $X \subseteq \R^n$,  a \textit{generalized section of $X$} is a relation of the form
\begin{equation}
\label{eq:s-a}
X_{s,a} =\{(x_{1}, \ldots, x_{n})\in X \mid  x_{s_{1}} =a_{{1}}, \ldots, x_{s_{r}} =a_{{r}}\}
\end{equation}
where $r>0$, $s= (s_1,\dots,s_r)$ is an $r-$tuple of integers with $1\leq s_{1} < \cdots <s_{r}\leq n$, and  $a=(a_{{1}}. \ldots, a_{{r}})$ is an $r-$tuple of reals.
When $r=0$ we define $X_{s,a} =X$ by convention, i.e.,  $X$ is a generalized section of itself. If $r>0$ then the section is said to be {\em proper}.  If all elements of $a$ are rational numbers then $X_{s,a}$ is called a {\em rational generalized section of $X$}.

\medskip
In the above definition, each $X_{s,a}$ is a subset of $\R^n$. If we remove the $r$ fixed components $x_{s_{1}},\ldots, x_{s_{r}}$ we can see $X_{s,a}$ as a subset of $\R^{n-r}$, which will be called a {\em generalized projection} of $X$ (resp. a {\em rational generalized projection} of $X$ if   $X_{s,a}$ is a rational generalized section of $X$).
\end{definition}

\begin{proposition}
\label{pr:recurs-cri}
For every $n \geq 1$,  a  relation $X\subseteq \R^{n}$  is \Ss-definable if and only if every {rational generalized
projection} of $X$ has finitely many singular points and these points have rational components.
\end{proposition}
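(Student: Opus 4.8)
The plan is to prove the equivalence by induction on $n$, using Theorem \ref{th:crit-n} as the engine at each level. Throughout, let (P) abbreviate the condition ``every rational generalized projection of $X$ has finitely many singular points and these points have rational components''. The first thing I would note is that taking $r=0$ in Definition \ref{de:generalized-section} makes $X$ itself a rational generalized projection of $X$, so (P) already subsumes conditions (FSP) and (RSP) for $X$.

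For the necessity direction I would argue that \Ss-definability is preserved by passing to rational generalized projections. Indeed, a rational generalized section $X_{s,a}$ is the intersection of $X$ with the hyperplanes $\{x_{s_j}=a_j\}$, each of which is \Ss-definable because $a_j\in\Q$; and the associated generalized projection is obtained by substituting the rational constants $a_j$ for the fixed variables in a defining formula, which again yields a \Ss-formula in the remaining $n-r$ variables. Hence every rational generalized projection of an \Ss-definable $X$ is \Ss-definable, and by the necessity part of Theorem \ref{th:crit-n} it satisfies (FSP) and (RSP). Thus (P) holds.

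For the sufficiency direction I would induct on $n$. In the base case $n=1$, conditions (FSP) and (RSP) for $X$ come from the $r=0$ projection, while (DS) is automatic since every rational section of a subset of $\R$ is a single point or empty, hence \Ss-definable; Theorem \ref{th:crit-n} then gives \Ss-definability. For the inductive step, assume (P) and verify the three hypotheses of Theorem \ref{th:crit-n}. Again (FSP) and (RSP) are immediate from the $r=0$ projection. To establish (DS), I would fix a rational section $X\cap H$ with $H=\R^i\times\{c\}\times\R^{n-i-1}$, $c\in\Q$, and set $Y=\pi_H(X\cap H)\subseteq\R^{n-1}$. The crucial observation is that every rational generalized projection of $Y$ is itself a rational generalized projection of $X$: fixing a family of coordinates of $Y$ to rational values and projecting them out amounts, after re-indexing through $\pi_H$, to fixing those same coordinates of $X$ together with the $i$-th coordinate to the rational value $c$, and then projecting. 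Consequently (P) for $X$ transfers to (P) for $Y$, so the induction hypothesis yields that $Y$ is \Ss-definable; re-inserting the rational constant $c$ shows $X\cap H$ is \Ss-definable, which is exactly (DS). Theorem \ref{th:crit-n} then gives that $X$ is \Ss-definable.

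The main obstacle I expect is purely organizational: carefully setting up the index bookkeeping so that composing ``fix coordinate $i$ to $c$ and project'' with an arbitrary rational generalized projection of $Y$ is recognized as a single rational generalized projection of $X$, with the correct translation between the index tuple $s$ for $X$ and the corresponding tuple for $Y$. Once this identification is made precise — noting in particular that singular points and the rationality of their components are unaffected by the relabeling $\pi_H$, by the facts recorded immediately after Definition \ref{de:section} — the transfer of (P) from $X$ to $Y$ is immediate and the induction closes.
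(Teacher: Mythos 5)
Your proposal is correct and follows essentially the same route as the paper: induction on $n$ with Theorem \ref{th:crit-n} supplying (FSP), (RSP) from the trivial projection and (DS) via the observation that rational generalized projections compose. The only (inessential) difference is that you establish necessity directly by substituting rational constants into an \Ss-formula, whereas the paper folds the proper-projection case of necessity into the same induction.
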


\begin{proof}
The proof goes by induction on $n$. The case $n=1$ is obvious. Assume now $n >1$.

Let $X$ be \Ss-definable and let $Y$ be a rational generalized  projection of $X$. If $Y=X$ then the result follows from Theorem \ref{th:crit-n}. If $Y$ is proper then $Y$ is definable in \str{\R,+,<,1,X} thus it is also \Ss-definable, and the result follows from our induction hypothesis.

Conversely assume that every {rational generalized  projection} of $X$ has finitely many singular points and they have rational components. We show that $X$ satisfies all three conditions of Theorem \ref{th:crit-n}. Conditions (FSP) and (RSP) follow from our hypothesis and the fact that $X$ is a {rational generalized projection} of itself. It remains to prove condition (DS) namely that every rational section of $X$ is \Ss-definable. This amounts to proving that  every rational projection $Z$ of $X$ is \Ss-definable. Now every generalized projection $Y$ of $Z$ is also a  generalized projection of $X$, thus by our induction hypothesis $Y$ has finitely many singular points and they have rational components. Since $Z$ is a proper projection of $X$, by our induction hypothesis it follows that $Z$ is \Ss-definable.
\end{proof}

\section{A definable criterion for \Ss-definability in suitable structures}
\label{sec:selfdef}

 In this section we prove that for every $n \geq 1$ and $X \subseteq \R^n$, if every nonempty \SXs{X}-definable relation contains a point with rational
 components then we can state a variant of Proposition \ref{pr:recurs-cri} which is
  expressible in \SXs{X}. This means that there exists a \SXs{X}-sentence
 (uniform in $X$) which expresses  the fact that $X$ is \Ss-definable. This provides a \emph{definable} criterion for \Ss-definability, similar to Muchnik's result
 \cite{Muchnik03} for definability in Presburger Arithmetic. We also extend these ideas to the case of \Ls-definability.

In this section $\Sx$ stands for the structure \SXs{X}.

\subsection{Quasi-singular points}

We aim to express \Ss-definability of a relation $X \subseteq \R^n$ in the structure $\Sx$ itself. A natural approach is to express the conditions of Proposition \ref{pr:recurs-cri} as an $\Sx$-sentence, however the formulation   involves the set $\Q$ as well as the set of $X$-singular elements. On the one hand $\Q$ is not necessarily $\Sx$-definable,  and on the other hand the naive definition of $X-$singularity involves
the  operation of multiplication  which
is  also not necessarily $\Sx$-definable. For the special case where $X$ is \Ls-definable, we introduced in \cite[Lemma 4.9]{BC2020}
an ad hoc  definition. Yet this definition
does not necessarily  hold when the relation $X$ is no longer assumed to be \Ls-definable. In order to overcome
this difficulty we introduce a weaker property   but
that is still definable in $\Sx$. This proves to be sufficient to establish our result.

\begin{definition}  Let  $X \subseteq \R^n$, $x \in \R^n$, and $r,s$ be two reals such that $0<s<r$.
	\begin{itemize}
		\item a vector $v\in \R^n$  is an $(r,s)$-\emph{quasi-stratum} of $x$ if $|v|\leq s$ and $(y\in X \leftrightarrow y+v \in X)$
		holds for all $y \in \R^n$  such that $y,y+v\in B(x,r)$.
		\item
		We say that $x \in \R^n$ is $X-$\emph{quasi-singular} if it  does not  satisfy the following property:
		\begin{equation}
			\begin{array}{l}
				\label{eq:qs}
				\text{there exist reals } r, s>0 \text{ such that the set of  $(r,s)$-{quasi-strata} of $x$ is nonempty, closed, } \\
				\text{and is stable under } v \mapsto v/2. \vspace*{-6mm}
			\end{array}
		\end{equation}		
	\end{itemize}
\end{definition}\smallskip

It is not difficult to check that if $x$ is not singular and $r$ is safe, then every stratum of $x$ is an $(r,s)$-quasi-stratum for $0<s<r$. However even for $r$ safe, there may exist $(r,s)$-quasi-strata of $x$ which are not strata of $x$, as shown in the following example.

\begin{example}
Let $n=2$, $X=(\Z  \cup \{-{3 \over 2},-{1 \over 2},{1 \over 2},{3 \over 2}\})   \times \R$, and $x=(0,0)$. Then $\Strem(x)$ is generated by the vector $(0,1)$, and every real $r>0$ is safe for $x$. Given $0<s<r$, the $(r,s)$-quasi-strata of $x$ can be characterized as follows:
\begin{itemize}
\itemsep=0.9pt
	\item if $r>{5 \over 2}$ then the $(r,s)$-quasi-strata of $x$ are vectors of the form $(0,l)$ with $|l| \leq s$ (i.e these are the strata of $x$ with norm at most $s$).
	\item if $r \leq {5 \over 2}$ then these are vectors of the form $({k \over 2},l)$ where $k \in \Z$, $k \leq 2r$, and $|({k \over 2},\ell)|\leq s$. Note that for $s<{1 \over 2}$, these are exactly the strata of $x$ with norm at most $s$.
\end{itemize}
\end{example}
	
\begin{lemma}\label{le:defquasi}
Let $X \subseteq \R^n$. The set of quasi-singular elements of $X$ is $\Sx$-definable. Moreover the property
``$X$ has finitely many quasi-singular elements" is  $\Sx$-definable  (uniformly in $X$) .
\end{lemma}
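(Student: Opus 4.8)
The plan is to show both parts of Lemma~\ref{le:defquasi} by unwinding Definition~\ref{eq:qs} into an explicit $\Sx$-formula. The key observation is that the notion of $(r,s)$-quasi-stratum is defined by a \emph{bounded} condition involving only the predicate $X$, the order, addition and the distance $|\cdot|$ (which is $\max_i|x_i|$ and hence first-order expressible in $\Ss$). First I would write down the formula
$$
\text{QStr}(v,x,r,s) \ \equiv\ |v|\leq s \ \wedge\ \forall y\,\bigl(\,y\in B(x,r)\wedge y+v\in B(x,r)\ \rightarrow\ (X(y)\leftrightarrow X(y+v))\,\bigr),
$$
which is patently an $\Sx$-formula with free variables $v,x,r,s$. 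Here $B(x,r)$ and $y+v\in B(x,r)$ abbreviate the $\Ss$-definable conditions $|y-x|<r$ and $|y+v-x|<r$.

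Next I would translate the three properties appearing in~(\ref{eq:qs}) — nonemptiness, closedness, and stability under $v\mapsto v/2$ — into first-order conditions over the $(r,s)$-quasi-stratum set $Q_{r,s}(x)=\{v\mid \text{QStr}(v,x,r,s)\}$. Nonemptiness is simply $\exists v\,\text{QStr}(v,x,r,s)$. Stability under halving is $\forall v\,(\text{QStr}(v,x,r,s)\rightarrow \text{QStr}(v/2,x,r,s))$, where $v/2$ is $\Ss$-definable. Closedness is the only property that is not syntactically immediate, since topological closure quantifies over limits; the plan is to replace it by the equivalent first-order statement that $Q_{r,s}(x)$ contains all its limit points, namely
$$
\forall w\,\bigl(\,\forall \varepsilon>0\,\exists v\,(\text{QStr}(v,x,r,s)\wedge |v-w|<\varepsilon)\ \rightarrow\ \text{QStr}(w,x,r,s)\,\bigr).
$$
Putting these together, property~(\ref{eq:qs}) becomes $\exists r\exists s\,(r>0\wedge s>0\wedge \text{(nonempty)}\wedge\text{(closed)}\wedge\text{(halving-stable)})$, and ``$x$ is $X$-quasi-singular'' is its negation — an $\Sx$-formula $\text{QSing}(x)$ with the single free variable $x$, as required.

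For the second part, expressing that $X$ has finitely many quasi-singular elements, the standard device is that a first-order-definable subset of $\R^n$ is finite if and only if it is bounded and has no accumulation point. The plan is to write ``$X$ has finitely many quasi-singular elements'' as the conjunction of boundedness, $\exists R\,\forall x\,(\text{QSing}(x)\rightarrow |x|<R)$, and discreteness, $\forall x\,(\text{QSing}(x)\rightarrow \exists \delta>0\,\forall y\,(\text{QSing}(y)\wedge |y-x|<\delta\rightarrow y=x))$. Each conjunct is an $\Sx$-sentence, and the whole is uniform in $X$ since $\text{QSing}$ mentions $X$ only through the single predicate symbol of $\Sx$; replacing $X$ by any other relation leaves the sentence intact.

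The main obstacle I anticipate is the closedness clause: one must be careful that the limit-point reformulation genuinely captures topological closure inside the first-order language, which it does precisely because $\Ss$ has the order and can quantify over arbitrarily small $\varepsilon$, so that the metric-space notion of closed set is faithfully first-order expressible here. A secondary point to verify is that the $\max$-metric, division by $2$, and the bounded quantifiers over $B(x,r)$ are all genuinely $\Ss$-definable (the first two are, since scaling by the rational $1/2$ and taking maxima of finitely many coordinates are $\Ss$-operations), so that every displayed formula really lies in the signature of $\Sx$ rather than requiring multiplication or the predicate for $\Z$.
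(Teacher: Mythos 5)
Your definition of the quasi-singular set follows the paper's proof essentially verbatim: the same formula for $(r,s)$-quasi-strata, and the same first-order rendering of nonemptiness, halving-stability, and closedness via the limit-point reformulation. One small caution on that part: the paper's formula $\phi$ requires $0<|v|<s$, so that the nonemptiness clause asserts the existence of a \emph{nonzero} quasi-stratum; with your $|v|\le s$ the zero vector always qualifies, the nonemptiness conjunct becomes vacuous, and the resulting predicate no longer matches the intended notion (in particular Lemma \ref{le:sing-quasi-sing}, that singular implies quasi-singular, would fail for it).

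The genuine gap is in your encoding of finiteness. Your sentence says ``the quasi-singular set is bounded and each of its points is isolated \emph{within the set}.'' This is strictly weaker than finiteness: the set $\{1/n \mid n\ge 1\}\subseteq\R$ is bounded and every one of its points is isolated in it, yet it is infinite; its accumulation point $0$ lies outside the set, so your discreteness clause, which quantifies only over quasi-singular $x$, never detects it. Since the quasi-singular set is merely $\Sx$-definable for an arbitrary expansion $\Sx$, no tameness assumption rules out such a configuration. The paper instead demands a \emph{uniform} separation constant: $\exists u>0$ such that any two distinct quasi-singular points are at distance greater than $u$. Combined with boundedness by $t$, this bounds the cardinality by roughly $(2t/u+1)^n$ and therefore genuinely expresses finiteness. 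Your argument is repaired by moving the existential quantifier for the separation radius in front of the two universal quantifiers, as in formula (\ref{eq:finiteQS}).
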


\begin{proof}
The property that $v$ is an $(r,s)$-stratum of $x$ can be expressed by the formula
		$$
		\phi(X,x, r ,s, v)\equiv  0< |v|< s \wedge  \forall y \   (y,y+v\in B(x,r) \rightarrow (y\in X \leftrightarrow y+v \in X)).
		$$
	
The set of $X$-quasi-singular elements can be defined by the formula
\begin{equation}
\label{eq:asterisk}
\begin{array}{ll}
	QS(x,X) \equiv & \neg \exists r \exists s \ ((0<s<r) \wedge  \exists v (\phi(X,x,r,s,v)) \wedge \\
& \forall v\ (\phi(X,x,r,s,v)\rightarrow \phi(X,x,r,s,\frac{v}{2})) \wedge \\
& \forall v \ (((|v|<s) \wedge \forall \epsilon>0 (\exists u \ (\phi(X,x,r,s,u)\wedge |v-u|<\epsilon ))) \rightarrow \phi(X,x,r,s,v)))
\end{array}
\end{equation}

The finiteness of the set of  quasi-singular points can be expressed by the formula
\begin{equation}
\label{eq:finiteQS}
\begin{array}{ll}
FS(X)\equiv &  (\exists t>0 \ \forall x \    (QS(x,X)   \rightarrow |x|<t))\\
&  \wedge (\exists  u>0
(\forall x \forall y ( (QS(x,X)  \wedge QS(y,X)  \wedge x\not=y)\rightarrow |x-y|>u)))
\end{array}
\end{equation}

\vspace*{-5mm}
\end{proof}

\begin{lemma}\label{le:as}
Let  $X \subseteq \R^n$.	 If $x$ is not quasi-singular then for some reals $0<s<r$ there exists an $(r,s)$-quasi-stratum of $x$ and	every $(r,s)$-quasi-stratum of $x$ is a stratum of $x$.
\end{lemma}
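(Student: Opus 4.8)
The plan is to use directly the parameters $r,s$ supplied by the failure of quasi-singularity and to show that already for these parameters every $(r,s)$-quasi-stratum is a stratum. Since $x$ is not quasi-singular, property~(\ref{eq:qs}) holds: there are reals $0<s<r$ for which the set $Q$ of $(r,s)$-quasi-strata of $x$ is nonempty, closed, and stable under $v\mapsto v/2$. In particular $Q$ contains a nonzero vector, which settles the first assertion, so it remains to prove that every $v\in Q\setminus\{0\}$ is a stratum (the zero vector being a stratum by convention).

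I would fix a nonzero $v\in Q$ and study the set of admissible scalars along the line $\R v$, namely $A=\{\beta\in\R \mid \beta v\in Q\}$. First $A$ is symmetric, because the quasi-stratum condition is invariant under $w\mapsto -w$ (substitute $y'=y-w$), and $1\in A$ since $v\in Q$ forces $|v|\le s$. The core step is an additivity property coming from the convexity of the $L_\infty$-ball: if $\beta_1,\beta_2\in A$ have the same sign and $|(\beta_1+\beta_2)v|\le s$, then $\beta_1+\beta_2\in A$. Indeed, for any $y$ with $y,\,y+(\beta_1+\beta_2)v\in B(x,r)$ the intermediate point $y+\beta_1 v$ lies on the segment $[y,\,y+(\beta_1+\beta_2)v]$, which is contained in the convex ball $B(x,r)$; chaining the quasi-stratum equivalences for $\beta_1 v$ and for $\beta_2 v$ then yields $y\in X\leftrightarrow y+(\beta_1+\beta_2)v\in X$.

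Combining this additivity with the stability of $Q$ under halving gives $1/2^{k}\in A$ for every $k\ge 0$; summing copies of $1/2^{k}$ while keeping every partial sum below $M:=s/|v|$, I obtain that every dyadic rational of $(0,M)$ lies in $A$. These being dense in $(0,M)$, and $A$ being closed (within the open ball of radius $s$, which is the content of~(\ref{eq:qs}), as $A$ is the preimage of $Q$ under the continuous map $\beta\mapsto\beta v$), symmetry gives $(-M,M)\subseteq A$; equivalently every $\alpha v$ with $|\alpha v|<s$ is an $(r,s)$-quasi-stratum. From here $v$ is a stratum with safe radius $\rho=s/2$: if $y\in X\cap B(x,\rho)$ and $z=y+\alpha v\in B(x,\rho)$, then $|\alpha v|<2\rho=s$, so $\alpha v\in Q$, and since $y,z\in B(x,\rho)\subseteq B(x,r)$ the quasi-stratum equivalence gives $z\in X$; this is exactly $B(x,\rho)\cap L_v(X\cap B(x,\rho))\subseteq X$.

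The only delicate point is the identity $(-M,M)\subseteq A$: halving alone produces merely the sequence $1/2^{k}\to 0$, whose closure misses the interval, so the argument genuinely needs the convexity-based additivity to manufacture a dense set of scalars before closedness can fill the gaps — this is precisely where all three hypotheses on $Q$ (a nonzero element, halving-stability, and closedness) are consumed. The remaining verifications are routine bookkeeping with balls and signs.
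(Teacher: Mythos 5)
Your proof is correct and follows essentially the same route as the paper's: halving gives arbitrarily small quasi-strata along $\R v$, convexity of the ball gives additivity of admissible scalars, density of dyadics plus the closedness clause fills in all scalars with $|\alpha v|<s$, and the stratum property follows. Your packaging via the scalar set $A$ and the explicit safe radius $s/2$ is a slightly cleaner organization of the same argument, not a different one.
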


\begin{proof}
 In this proof ``quasi-stratum'' stands for ``$(r,s)$-quasi-stratum''.  We consider the negation of $QS$. The matrix of the formula consists of four conjuncts.
 The second  conjunct  asserts that there exists a quasi-stratum. The third conjunct asserts that if a vector $v$ is a quasi-stratum
 then for all integers $p\leq 0$ the vector  $2^{p} v $  is a quasi-stratum.  Also if  $p\geq 0$  and  $|2^{p} v|<s $ then $2^{p} v $
  is a quasi-stratum. Indeed, because $B(x,r)$ is convex, if $y$ and $y+2v$ belong to $B(x,r)$
  then $y+v$ belongs to $B(x,r)$ and we have
\begin{equation}
\label{eq:transitive}
y\in X \leftrightarrow  y+   v\in X \leftrightarrow   z= y+  2 v\in X.
\end{equation}
This  generalizes to any $p\geq 0$ provided  $|2^{p} v|<s$.

\medskip
  We will show that if $v$ is quasi-stratum, then it is a stratum, i.e.,
  if $y,z\in B(x,r)$ and $z\in L_{v}(y)$ then $y\in X\leftrightarrow z \in X$. To fix ideas
  set $z=y + 2^{\ell} \beta v$ for some real $0<\beta<1$. Let $\alpha_{q}= \sum_{-q< i<0}  a_{i} 2^{i}$ with $a_{i}\in \{0,1\}$, be a sequence of dyadic rationals converging to $\beta$.
Since  $|2^{i} v|<s$ holds for all $-q< i<0$, every $2^{i} v $ is a quasi-stratum and therefore so is $\alpha_{q} v$.
Arguing as in (\ref{eq:transitive}) we see that for all $t, t+\alpha_{q} v\in B(x,r)$ we have
$t\in X \leftrightarrow  t+  \alpha_{q} v \in X$. Because $\alpha_{q} <1$ this shows that all $ \alpha_{q} v$
are quasi-strata. The last conjunct  implies that $ \beta v$ is a quasi-stratum
and again using the same argument as in (\ref{eq:transitive}) we get
$$
y\in X \leftrightarrow  y+  \beta v\in X \leftrightarrow    y+  2 \beta v \leftrightarrow \cdots   \leftrightarrow z= y+  2^{\ell} \beta v\in X
$$

\vspace*{-8mm}
\end{proof}

\begin{lemma}\label{le:sing-quasi-sing}
For every $X \subseteq \R^n$, every $X-$singular element is $X-$quasi-singular.
\end{lemma}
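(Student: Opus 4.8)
The plan is to prove the statement by contraposition, using Lemma~\ref{le:as} as the single engine of the argument. Concretely, I would assume that $x$ is \emph{not} $X$-quasi-singular and deduce that $x$ is not $X$-singular, i.e.\ that $\Str(x) \neq \{0\}$; the contrapositive is exactly the claim of the lemma.

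First I would feed the hypothesis ``$x$ is not quasi-singular'' into Lemma~\ref{le:as}. That lemma returns reals $0 < s < r$ for which two conclusions hold simultaneously: there exists an $(r,s)$-quasi-stratum of $x$, and every $(r,s)$-quasi-stratum of $x$ is an $X$-stratum of $x$. I would then select such a quasi-stratum $v$ and invoke the second conclusion to obtain $v \in \Str(x)$. As soon as $v$ is nonzero this yields $\dim(x) \geq 1$, so $x$ is nonsingular, which completes the contraposition.

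The hard part---really the only point to watch---is ensuring that the existence clause of Lemma~\ref{le:as} supplies a \emph{nonzero} quasi-stratum. This is essential, because the zero vector is vacuously both a quasi-stratum and a stratum, so if $v$ could be $0$ the argument would carry no information: having only the zero quasi-stratum is perfectly compatible with singularity. I would justify the nonzero requirement by tracing it back to the second conjunct $\exists v\,\phi(X,x,r,s,v)$ in the definition of $QS$ (formula~(\ref{eq:asterisk})), where $\phi$ explicitly imposes $0 < |v| < s$; thus the quasi-stratum produced by Lemma~\ref{le:as} is genuinely nonzero, and the contradiction with $\Str(x) = \{0\}$ is real. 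Beyond this bookkeeping the proof requires no computation, being an immediate consequence of Lemma~\ref{le:as} together with the definition of $X$-singularity.
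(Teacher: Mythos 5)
Your proof is correct and follows essentially the same route as the paper: contraposition via Lemma~\ref{le:as}, which turns a nonzero $(r,s)$-quasi-stratum into a genuine stratum, forcing $\Str(x)\neq\{0\}$. Your extra remark that the existence clause must deliver a \emph{nonzero} quasi-stratum (guaranteed by the condition $0<|v|<s$ in $\phi$) is a point the paper leaves implicit, but it is the same argument.
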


\begin{proof} If $x \in \R^n$ is not $X-$quasi-singular then there exist $0<s<r$ and an $(r,s)$-quasi-stratum,
which is a stratum by Lemma \ref{le:as}.
\end{proof}

\begin{lemma}\label{le:equiv-quasi-sing-and-sing-for-Ss}
	If   $X \subseteq \R^n$  is \Ss-definable, every $X-$quasi-singular element is $X-$singular.
\end{lemma}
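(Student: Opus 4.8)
The plan is to prove the contrapositive: assuming $X$ is \Ss-definable, I show that every \emph{non}singular point $x$ fails to be $X$-quasi-singular, i.e.\ that $x$ satisfies property~(\ref{eq:qs}). Concretely I will exhibit radii $r,s>0$ for which the set of $(r,s)$-quasi-strata of $x$ is exactly $\Str(x)\cap\{v:|v|\le s\}$, the closed $s$-ball of the strata subspace. Since $x$ is nonsingular this subspace is nontrivial, so that set is nonempty, it is closed (a subspace intersected with a closed ball), and it is stable under $v\mapsto v/2$; hence $x$ does not satisfy the defining condition of quasi-singularity.

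To access a manageable local description I would use quantifier elimination. By Theorem~\ref{th:quantifier-elimination-for-R-plus}, $X$ is a finite Boolean combination of half-spaces $\{y:\langle a_j,y\rangle\bowtie_j b_j\}$ with $a_j\in\Q^n$ and $b_j\in\Q$. I choose $r>0$ to be safe for $x$ (Proposition~\ref{pr:uniform-radius}) and small enough that $B(x,r)$ meets only those defining hyperplanes that pass through $x$; every remaining inequality has constant truth value on $B(x,r)$. Thus on $B(x,r)$ membership in $X$ depends only on the signs of the finitely many linear forms $\ell_j(y):=\langle a_j,y\rangle-b_j$ satisfying $\ell_j(x)=0$, whose zero sets form a central hyperplane arrangement through $x$. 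Call $\ell_j$ a \emph{boundary form} if crossing $\{\ell_j=0\}$ actually changes membership in $X$ somewhere in $B(x,r)$. I claim $\Str(x)=\bigcap_{j\ \text{boundary}}\ker\ell_j$, so that $\Str(x)$ is a rational subspace and, in particular, every $v\in\Str(x)$ with $|v|\le s<r$ is an $(r,s)$-quasi-stratum, since $r$ is safe and translating along a stratum direction preserves membership on $B(x,r)$.

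The heart of the argument is to bound the quasi-strata from the other side for small $s$. Fix a boundary form $\ell_{j_0}$ and pick a point $p\in\{\ell_{j_0}=0\}\cap B(x,r)$ lying off every other hyperplane of the arrangement and such that membership in $X$ differs on the two sides of $\{\ell_{j_0}=0\}$ near $p$; then there is a ball $B(p,\delta)\subseteq B(x,r)$ inside which $X$ occupies exactly one side of $\{\ell_{j_0}=0\}$. If $v$ is an $(r,s)$-quasi-stratum with $0<s<\delta$ and $\ell_{j_0}(v)\neq 0$, then $v$ is transverse to $\{\ell_{j_0}=0\}$, so I can choose $y$ with $y,y+v\in B(p,\delta)$ lying on opposite sides, giving $y\in X\not\leftrightarrow y+v\in X$ and contradicting the quasi-stratum property. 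Hence every such $v$ satisfies $\ell_{j_0}(v)=0$ for all boundary forms, i.e.\ $v\in\Str(x)$. Combined with the converse observation of the previous paragraph, this yields the equality of the $(r,s)$-quasi-strata with $\Str(x)\cap\{v:|v|\le s\}$ and completes the proof. I expect the main obstacle to be exactly this transversality step: one must rule out the Boolean combination being invariant under a transverse $v$ even though it is assembled from central hyperplanes, which is precisely why I localize at a point $p$ where a single hyperplane governs membership.
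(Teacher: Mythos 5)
Your overall plan coincides with the paper's: prove the contrapositive by exhibiting radii $r,s$ for which the $(r,s)$-quasi-strata of a nonsingular $x$ are exactly the strata of norm at most $s$, using the local polyhedral description of $X$ near $x$ that comes from quantifier elimination. The easy inclusion (small strata are quasi-strata, and the resulting set is nonempty, closed and stable under halving) is fine. The gap is in the converse inclusion, precisely at the transversality step you yourself flagged. You assume that for each boundary form $\ell_{j_0}$ there is a point $p$ on $\{\ell_{j_0}=0\}$, off all other hyperplanes of the arrangement, and a ball $B(p,\delta)\subseteq B(x,r)$ in which $X$ is exactly one side of $\{\ell_{j_0}=0\}$. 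This fails when $X$ is locally contained in a lower-dimensional polyhedron, because then the change of membership across $\{\ell_{j_0}=0\}$ is witnessed only on a lower-dimensional locus, i.e. only at points lying on other hyperplanes. Concretely, take $n=3$, $X=\{(x_1,x_2,x_3)\mid x_2=0 \wedge x_1>0\}$ and $x$ the origin, which is nonsingular with $\Str(x)$ spanned by $(0,0,1)$. The form $\ell=x_1$ is a boundary form, and the vectors $v=(c,0,0)$ must be excluded from the quasi-strata; but every point of $\{x_1=0\}$ off $\{x_2=0\}$ has a neighborhood disjoint from $X$, so no ball of the required kind exists, and the only witnesses $y$ with $y\in X\not\leftrightarrow y+v\in X$ lie exactly on the degenerate locus your genericity requirement forces you to avoid. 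As written, the argument therefore proves nothing about such $v$, and the sandwich between strata and quasi-strata does not close.

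The paper avoids this by never localizing at a single hyperplane. If $v$ is not a stratum there is, by definition, a line parallel to $v$ meeting both $X$ and its complement inside $B(x,r)$; applying a homothety centered at $x$ (which preserves the cones of the local description) one may assume its chord in $B(x,r)$ has length greater than $r$, and since the local description involves $k$ hyperplanes the chord splits into at most $k$ alternating segments, one of which has length at least $r/k$. Taking $s=r/k$ uniformly, one places $z$ and $z+v$ across the junction between that long segment and an adjacent segment of the opposite type, contradicting the quasi-stratum property. The essential differences are that the witnessing pair is found on the actual bad line rather than near a generic point of one hyperplane, and that $s$ is chosen uniformly from $r$ and $k$ rather than through a point-dependent $\delta$. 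Your proof could be repaired along these lines, but the transversality step as stated is a genuine gap.
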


\begin{proof}
	We prove that if $x \in \R^n$ is not $X-$singular then it is not quasi-singular,  i.e., that it satisfies  $\neg QS(x,X)$,  cf. Expression (\ref{eq:asterisk}). We find suitable values of $r,s$ such that the set of $(r,s)$-quasi-strata of $x$ coincides with the set of strata $v$ of $x$ such that $|v|\leq s$. The result will then follow from the fact that  $\Str(x)$ is a non-trivial vector subspace.
	
	{ The relation $X$ is \Ss-definable thus by  \cite[Corollary 4.4]{BC2020} there exists $r>0$ such that inside the ball $B(x,r)$, $X$ coincides with a finite collection of cones.
	By cone we mean an intersection of open or closed halfspaces delimited by finitely many, say $k$,
	hyperplanes  of dimension $n-1$ and containing $x$. Without loss of generality we can assume that $r$ is safe. We show that a suitable value for $s$ is $s=\frac{1}{k}r$.
	
	Since $r$ is safe}, every stratum $v$ of $x$ with $|v|\leq s$ is also an $(r,s)$-quasi-stratum of $x$. Conversely let $v$ be an $(r,s)$-quasi-stratum of $x$, and assume for a contradiction that $v \not\in \Str(x)$. Then there exists a point $y\in B(x,r)$ such that the line $L_{v}(y)$  intersects $X$ and its complement
	inside $B(x,r)$.
	{
	Let $h$ be any homothety with ratio $0<\lambda \leq 1$ centered at $x$ such that
	the segment  $L_{v}(h(y))\cap B(x,r)$ has length greater than $r$.
	}
	Then, within $B(x,r)$, the line  $L_{v}(h(y))$ decomposes into $2\leq p\leq k$ segments
	which are alternatively inside and outside $X$. One of these segments has length at least $\frac{1}{p}r\geq s \geq |v|$.
	We obtain that for some $z\in L_{v}(h(y))$  we have	$z, z+v  \in B(x,r)$ and $z\in X \leftrightarrow z+ v \not\in X $,
	which contradicts our assumption that $v$ is an $(r,s)$-quasi-stratum of $x$.
\end{proof}

Note that in Lemma \ref{le:equiv-quasi-sing-and-sing-for-Ss} the condition that $X$ is \Ss-definable cannot be removed. Consider, e.g., $X=\R \times \Q$. Then it can be shown that for all $x \in \R^2$ we cannot find any reals  $0<s<r$ for which the set of $(r,s)$-quasi-strata of $x$ is closed,
and this implies that $x$ is $X-$quasi-singular. However $x$ is not $X-$singular since $(1,0)$ is an $X-$stratum for $x$.

\subsection{Alternative characterization of \Ss-definability in $\Sx$.}

 We can state the following variant of Theorem \ref{th:crit-n} for $\Sx$-definable relations under the hypothesis that
 all nonempty  $\Sx$-definable relations $Y$ contain a point with rational components
 (recall that  $\Sx$ stands for
the structure \SXs{X} where $X$ is some fixed but arbitrary relation). Observe that this implies that all
 definable finite subsets $Y$ of $\R^{n}$ are included in $\Q^{n}$. Indeed, for all points $y\in Y$ with rational components
 the set $Y\setminus \{y\}$ is $\Sx$-definable.

\begin{proposition}
\label{pr:altern-cri-quasi} Let $n \geq 1$ and  $X\subseteq \R^{n}$ be such that  every nonempty $\Sx$-definable relation contains a point with rational components. Then $X$ is \Ss-definable if and only if every {generalized projection} of $X$ has finitely many quasi-singular points.
\end{proposition}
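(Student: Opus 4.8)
The plan is to reduce Proposition~\ref{pr:altern-cri-quasi} to the already-established Proposition~\ref{pr:recurs-cri}, which gives an equivalent characterization of \Ss-definability in terms of \emph{rational generalized projections} having finitely many \emph{singular} points with rational components. The present statement differs in two ways: it quantifies over \emph{all} generalized projections (not merely rational ones), and it uses \emph{quasi-singular} points in place of singular points. So the core of the proof is to show these two reformulations are equivalent under the standing hypothesis that every nonempty $\Sx$-definable relation contains a point with rational components.

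\medskip\noindent
First I would prove the forward direction. Assume $X$ is \Ss-definable. By Proposition~\ref{pr:recurs-cri} (or directly by Theorem~\ref{th:crit-n} applied to each projection), every generalized projection $Y$ of $X$ is itself \Ss-definable, since fixing components to fixed reals keeps $Y$ definable in \SXs{X} and hence, being \Ss-definable on its face via quantifier elimination, genuinely \Ss-definable. For such a \Ss-definable $Y$, Lemma~\ref{le:sing-quasi-sing} and Lemma~\ref{le:equiv-quasi-sing-and-sing-for-Ss} together give that the quasi-singular points of $Y$ coincide exactly with its singular points. By (FSP) applied to $Y$, there are finitely many singular points, hence finitely many quasi-singular points. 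This handles the forward direction for every generalized projection, rational or not.

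\medskip\noindent
For the converse, suppose every generalized projection of $X$ has finitely many quasi-singular points. The goal is to verify the hypotheses of Proposition~\ref{pr:recurs-cri}: every \emph{rational} generalized projection has finitely many \emph{singular} points, all with rational components. Finiteness is the easy part: by Lemma~\ref{le:sing-quasi-sing} every singular point is quasi-singular, so a projection with finitely many quasi-singular points has finitely many singular points, giving (FSP). The rationality of the singular points is where the standing hypothesis enters. The set of quasi-singular points of $X$ is $\Sx$-definable by Lemma~\ref{le:defquasi}, and being finite it is an $\Sx$-definable finite set; by the hypothesis (and the remark that $\Sx$-definable finite sets lie in $\Q^n$, obtained by deleting one rational point at a time) every quasi-singular point has rational components, hence so does every singular point, giving (RSP). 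The same reasoning applies to each rational generalized projection $Y$, which is $\Sx$-definable, so its finitely many quasi-singular points—and a fortiori its singular points—have rational components. Having established (FSP) and (RSP) for all rational generalized projections, Proposition~\ref{pr:recurs-cri} yields that $X$ is \Ss-definable.

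\medskip\noindent
The main obstacle I anticipate is the rationality argument in the converse, specifically the passage from ``finitely many quasi-singular points'' to ``these points have rational components.'' This is not automatic: quasi-singularity is a strictly weaker notion than singularity (as the example $X=\R\times\Q$ in the discussion after Lemma~\ref{le:equiv-quasi-sing-and-sing-for-Ss} shows, where every point is quasi-singular), so one cannot directly invoke a characterization of singular points. The argument must instead route through $\Sx$-definability of the quasi-singular set (Lemma~\ref{le:defquasi}) combined with the structural hypothesis on $\Sx$-definable relations, which is precisely the device that replaces the $\Q$-definability unavailable in general structures. A secondary subtlety is justifying that arbitrary (not just rational) generalized projections are \Ss-definable in the forward direction, which relies on the fact that once $X$ is \Ss-definable, Theorem~\ref{th:crit-n} propagates definability to all sections and hence to all projections regardless of whether the fixed values are rational.
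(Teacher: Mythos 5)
Your converse direction is sound and matches the paper's in substance: finiteness of singular points follows from Lemma~\ref{le:sing-quasi-sing}, and rationality is obtained exactly as the paper obtains it, by combining the $\Sx$-definability of the (finite) quasi-singular set (Lemma~\ref{le:defquasi}) with the standing hypothesis on $\Sx$; routing the conclusion through Proposition~\ref{pr:recurs-cri} rather than re-running the induction behind Theorem~\ref{th:crit-n} is a legitimate repackaging.

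The forward direction, however, has a genuine gap. You assert that every generalized projection $Y$ of a \Ss-definable $X$ is itself \Ss-definable because ``fixing components to fixed reals keeps $Y$ definable in \SXs{X}.'' This is false when the fixed values are irrational: definability here is without parameters, and Theorem~\ref{th:crit-n} propagates \Ss-definability only to \emph{rational} sections --- that is precisely why condition (DS) is restricted to rational sections. Concretely, take $X=\{(x_1,x_2)\in\R^2 \mid x_2<x_1\}$, which is \Ss-definable; the generalized projection obtained by freezing $x_1=\sqrt{2}$ is $(-\infty,\sqrt{2})\subseteq\R$, whose unique singular point $\sqrt{2}$ is irrational, so it violates (RSP) and is not \Ss-definable. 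Hence you cannot invoke Lemma~\ref{le:equiv-quasi-sing-and-sing-for-Ss} or (FSP) for such a $Y$, and your argument only covers rational projections. The paper closes exactly this gap with a definability trick: for a fixed pattern of frozen coordinates, the set $A$ of parameter tuples $a$ for which $X_a$ has infinitely many quasi-singular points is $\Sx$-definable via the formula $FS$ of Lemma~\ref{le:defquasi}, hence \Ss-definable since $X$ is; if $A$ were nonempty it would contain a rational tuple by Theorem~\ref{th:quantifier-elimination-for-R-plus}, producing a rational generalized projection with infinitely many quasi-singular points, which contradicts the (correct) rational case of your own argument. You need this step, or some substitute for it, to handle irrational parameters.
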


\begin{proof}
We proceed by induction on $n$.

\smallskip
\underline{Case $n=1$.} Assume first that $X$ is \Ss-definable. The only  {generalized
projection} of $X$ that need to be studied is  $X$ itself. Now by Theorem \ref{th:crit-n}, $X$ has finitely many singular points, and by Lemma \ref{le:equiv-quasi-sing-and-sing-for-Ss} these are precisely its  quasi-singular points.

Conversely assume that every generalized projection of $X$ has finitely many quasi-singular points. If the generalized projection is not $X$, then it is a singleton and there is nothing to check. It remains to consider the case where the projection is equal to $X$. By
Lemma \ref{le:sing-quasi-sing} this implies that $X$ has finitely many singular points. Now $X \subseteq \R$ thus the set of $X-$singular points
coincides with the topological boundary $Bd(X)$ of $X$. It follows that $Bd(X)$ is finite, i.e., $X$ is the union of finitely many intervals. Moreover
$Bd(X)$ is $\Sx$-definable
and by our assumption on $\Sx$ it follows that $Bd(X) \subseteq \Q$ thus every $X-$singular point is rational. The result follows from Theorem \ref{th:crit-n}.

\smallskip
 \underline{Case $n>1$.}
Assume first that $X$ is \Ss-definable. The relation satisfies  property (FSP) of Theorem  \ref{th:crit-n} and  by  Lemma \ref{le:equiv-quasi-sing-and-sing-for-Ss} it has finitely many quasi-singular points. It thus suffices to consider proper subsets of $X$.
Assume without loss of generality that the projections are obtained by freezing the $0<p\leq n$ first components.  For every $a= (a_1,\dots,a_p)\in \R^{p}$, consider the projection
 $$
 X_{a}= \{(x_{p+1},\dots,x_n) \in \R^{n-p} \mid  (a_1,\dots,a_p,x_{p+1},\dots,x_n) \in X\}.
 $$
Consider the set $A$ of elements $a=(a_1,\dots,a_p)\in \R^p$ such that the relation  $ X_{a}$
 has infinitely many quasi-singular points.
Using expression  (\ref{eq:finiteQS}), the set  $A$  is $\Sx$-definable, thus it is \Ss-definable because so is   $X$. If this set were nonempty,
 by Theorem \ref{th:quantifier-elimination-for-R-plus} it would contain an element of $\Q^p$, which means that there exists a rational generalized projection of $X$ which has infinitely many quasi-singular points, a contradiction.

Conversely assume that  every {generalized
projection} of $X$ has finitely many quasi-singular points, and let us prove that  $X$ satisfies all conditions of Theorem \ref{th:crit-n}. Condition (DS) follows from the fact that every rational section of $X$ is a generalized projection of $X$ thus
  is \Ss-definable by our induction hypothesis.  For conditions (FSP) and (RSP), we observe that $X$ is a generalized projection
  of itself thus the set of $X-$quasi-singular points is finite. By Lemma \ref{le:defquasi} this set is
  $\Sx$-definable thus it is a subset of $\Q^n$ by our assumption on $\Sx$, and the result follows from Lemma \ref{le:sing-quasi-sing}.
\end{proof}

\subsection{Defining  \Ss-definability}

The formulation of conditions in Proposition \ref{pr:altern-cri-quasi} allows us to express \Ss-definability as a sentence in the structure $\Sx$ itself.

\begin{theorem}\label{th:seldef}	
	Let $n \geq 1$ and  $X\subseteq \R^{n}$ be such that  every nonempty $\Sx$-definable relation contains a point with rational components. There exists a $\Sx$-sentence $\Phi_{n}$ (which is uniform in $X$)  which holds in $\Sx$ if and only if $X$ is \Ss-definable.
\end{theorem}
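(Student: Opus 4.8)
The plan is to combine Proposition~\ref{pr:altern-cri-quasi} with the definability established in Lemma~\ref{le:defquasi}: essentially all the work has already been done, and what remains is to package the criterion of Proposition~\ref{pr:altern-cri-quasi} as a single sentence. By that proposition, under the standing hypothesis on $\Sx$, the relation $X$ is \Ss-definable if and only if every generalized projection of $X$ has finitely many quasi-singular points. So it suffices to express this last condition as an $\Sx$-sentence that is uniform in $X$.

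First I would observe that a generalized projection is determined by a choice of frozen indices $\sigma=(s_1,\dots,s_r)$ with $1\le s_1<\cdots<s_r\le n$ together with a tuple $a=(a_1,\dots,a_r)$ of reals; there are only finitely many admissible index tuples $\sigma$ (including the empty one, which yields $X$ itself). Hence $\Phi_n$ will be a finite conjunction indexed by the $\sigma$'s, with the values $a$ universally quantified inside each conjunct. Next, for each fixed $\sigma$ I would relativize the formulas of Lemma~\ref{le:defquasi} to the projection $X_{\sigma,a}\subseteq\R^{n-r}$. The one ingredient needed is a uniform expression of membership in $X_{\sigma,a}$: for $z=(z_1,\dots,z_{n-r})\in\R^{n-r}$ one has $z\in X_{\sigma,a}$ iff $X(w_1,\dots,w_n)$ holds, where $w_{s_j}=a_j$ for each $j$ and the remaining coordinates are filled, in order, by $z_1,\dots,z_{n-r}$. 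Substituting this defined membership predicate for the occurrences of ``$y\in X$'' in the formula $\phi(X,x,r,s,v)$ of Lemma~\ref{le:defquasi} (now with $x,v$ ranging over $\R^{n-r}$), and carrying the substitution through the formulas $QS$ and $FS$ of Expressions~(\ref{eq:asterisk}) and~(\ref{eq:finiteQS}), yields an $\Sx$-formula, which I denote $FS_\sigma(a)$ with free variables $a$, asserting that $X_{\sigma,a}$ has finitely many quasi-singular points. The substitution is purely syntactic and does not depend on the interpretation of $X$, which gives the required uniformity.

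Finally I would set
\[
\Phi_n\ \equiv\ \bigwedge_{\sigma}\ \forall a\ FS_\sigma(a),
\]
the conjunction ranging over all admissible index tuples $\sigma$. By construction $\Phi_n$ is an $\Sx$-sentence, uniform in $X$, and it holds in $\Sx$ precisely when every generalized projection $X_{\sigma,a}$ has finitely many quasi-singular points; by Proposition~\ref{pr:altern-cri-quasi} this is equivalent to $X$ being \Ss-definable.

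The only real subtlety, and the point I would check most carefully, is this relativization step: one must verify that quasi-singularity of a point of $X_{\sigma,a}$ is genuinely captured by the relativized formula, i.e.\ that the notion of $(r,s)$-quasi-stratum and the requirement that the set of such strata be nonempty, closed, and stable under halving all transport correctly when $X$ is replaced by the definable predicate for $X_{\sigma,a}$. Since these notions are first-order and refer to $X$ only through membership, the transport is immediate once membership in $X_{\sigma,a}$ is expressed as above; the place where an error would most likely creep in is the coordinate-insertion bookkeeping for each index tuple $\sigma$ and the accompanying shift in arity.
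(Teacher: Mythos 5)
Your proposal is correct and follows essentially the same route as the paper: the paper likewise reduces to Proposition~\ref{pr:altern-cri-quasi} and builds $\Phi_n$ as a finite conjunction over the choices of frozen coordinates, universally quantifying the frozen values $\xi$ and relativizing the formulas $\phi$, $QS$, $FS$ of Lemma~\ref{le:defquasi} by replacing ``$y\in X$'' with ``$\xi+y\in X$''. The only difference is notational (the paper indexes by the set $I$ of non-frozen coordinates rather than by the tuple $\sigma$ of frozen ones).
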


\begin{proof}
{Let $[n]$ denote the set $\{1, \ldots, n\}$.}
By Proposition \ref{pr:altern-cri-quasi} it suffices to express the fact that every generalized projection of $X$ has finitely many quasi-singular points.
This  leads us to consider all possible
generalized projections obtained by freezing a subset $[n] \setminus I$ of components as in Definition \ref{de:generalized-section}.
Let   $\R^{I}$  denote the product of copies of $\R$ indexed by $I$
and for all $x,y\in \R^{I}$  set  $|x-y|_{I}= |(x+z) -(y+z)|$ for any $z\in \R^{[n]\setminus I}$.
For $x\in \R^{I}$ and $r\geq 0$   set
$B_{I}(x,r)=\{y\in \R^{I} \mid |x-y|_{I} <r\}$. We use the pair $(n,I)$ as a parameter
for  the predicates $\phi, QS,FS$ (see Lemma \ref{le:defquasi}).
{ The symbol $\xi$ stands for the  subvector with frozen components (we have $\xi \in \R^{[n]\setminus I}$). With some abuse of notation we write $\xi+z$ for $\xi \in \R^{[n]\setminus I}$ and $z \in \R^I$, that is, if $\xi=(\xi_i)_{i \in \R^{[n]\setminus I}}$
and  $z=(z_i)_{i \in I}$ then $\xi+z=(w_1,\dots,w_n)$ with $w_i=z_i$ if $i \in I$ and $w_i=\xi_i$ otherwise. We can define the predicates $\phi_{n,I}$, $QS_{n,I}$ and $FS_{n,I}$ as follows:}
\begin{equation}
\nonumber
\begin{array}{ll}
\phi_{n,I}(X,\xi,x, r,s,v)\equiv & \xi\in  \R^{[n]\setminus I} \wedge  x, v\in \R^{ I} \wedge  0< |v|_{I}< s\\
& \wedge \forall y \in  \R^{ I}   (y,y+v\in B_{I}(x,r) \rightarrow (\xi+ y\in X \leftrightarrow  \xi+ y+v \in X))
\end{array}
\end{equation}
\begin{equation}
\nonumber
\begin{array}{ll}
QS_{n,I}(x,\xi, X)\equiv &  \neg \exists r \exists s  (( 0<s<r) \wedge \exists v\ (\phi_{n,I}(X,\xi,x, r,s,v)) \\
 &\wedge \forall v\ (\phi_{n,I}(X,\xi,x,r,s,v) \rightarrow \phi_{n,I}(X,\xi,x, r,s,\frac{v}{2}) )  \\
&\wedge  \forall u \ (((|u|_{I}<s) \wedge \forall \epsilon>0 (\exists v\ (\phi_{n,I}(X,\xi,x, r,s,v)\wedge |v-u|_{I}<\epsilon )))\\
& \phantom{xxxxxxxxxxxxxxxxx}  \rightarrow  \phi_{n,I}(X,\xi,x, r,s,u)))
\end{array}
\end{equation}
\begin{equation}
\nonumber
\begin{array}{ll}
FS_{n,I}(X,\xi)\equiv &  (\exists t>0 \forall x \    (QS_{n,I}(x,\xi, X)   \rightarrow |x|_{I}<t)\\
&  \wedge (\exists  s>0
(\forall x \forall y ( (QS_{n,I}(x,\xi, X) \wedge QS_{n,I}(y,\xi, X) \wedge x\not=y)\rightarrow |x-y|_{I}>s)
\end{array}
\end{equation}

This leads to the following definition of  $\Phi_{n}$:
\begin{equation}
\label{eq:ouf}
\Phi_{n}\equiv \bigwedge_{I\subseteq [n]} \forall \xi\in  \R^{[n]\setminus I} \ \  FS_{n,I}(X,\xi).
\end{equation}

\vspace*{-7mm}
\end{proof}

{
\begin{remark}
	
	One can prove that Theorem \ref{th:seldef} does not hold anymore if we remove the assumption that  every nonempty $\Sx$-definable relation contains a point with rational components. Indeed consider $n=1$ (the case $n \geq 1$ easily reduces to this case) and a singleton set $X=\{x\} \subseteq \R$. Then
	by Theorem \ref{th:quantifier-elimination-for-R-plus}, $X$ is \Ss-definable if and only if $x \in \Q$. Thus if there exists a $\Sx$-sentence $\Phi_{n}$ which expresses that $X$ is \Ss-definable, then it is easy to transform $\Phi_{n}$ into a \Ss-formula $\Phi'_n(x)$ which defines $\Q$ in \Ss, and this contradicts Theorem \ref{th:quantifier-elimination-for-R-plus}.
\end{remark}
}

\subsection{Extensions to \Ls-definability}
\label{sec:extensions}

We extend the previous results to the case of \Ls-definability.
Here  $\Tx$ stands for
the structure \LXs{X} with  $n \geq 1$ and $X \subseteq \R^n$. We prove that if every nonempty $\Tx$-definable relation contains a point with rational components then one can express the property that $X$ is \Ls-definable with a $\Tx$-sentence.

The construction is based on the decomposition of any set of reals into ``integer'' and  ``fractional'' sets, which allows us to reduce the \Ls-definability of $X$, on  one hand to the $\langle \Z,+, < \rangle$-definability of some subsets of $\Z^n$ and on  the other hand  to the \Ss-definability of a collection of subsets of $[0,1)^n$. In order to express these two kinds of properties in $\Tx$, we rely respectively on Muchnik's Theorem \cite{Muchnik03} and on
Theorem \ref{th:seldef}.

We start with a  property which holds for all relations under no particular assumption.
Given a relation $X\subseteq \R^{n}$ consider the denumerable set of distinct restrictions of $X$ to unit hypercubes, i.e.,
$$
 \tau_{a}\big(X \cap ([a_{1}, a_{1}+1), \cdots, [a_{n}, a_{n}+1) ) \big)
$$
where  $a=(a_{1},   \cdots, a_{n})\in \Z^{n}$ and $\tau_{a}$ is the translation $x\mapsto x-a$. Let $\Delta_{m}$ denote this
collection of  sets  where $m$ runs over some denumerable set $M$. For each $m \in M$,  let $\Sigma_{m}\subseteq \Z^{n}$ satisfy the condition
$$
x\in \Sigma_{m} \leftrightarrow x + \Delta_{m} = X \cap  \big( [ x_{1},  x_{1},+1), \cdots, [ x_{n},  x_{n},+1) \big)
$$
Observe that  the   decomposition
\begin{equation}
\label{eq:unique-decomposition}
X= \bigcup_{m \in M} \Sigma_{m} + \Delta_{m}
\end{equation}
is unique by construction.
In the particular case of \Ls-definable relations we have the following result.

\begin{proposition}{(\cite[Theorem 7]{BFL08}, see also \cite{FV59})}
\label{prop-decomp}
A relation $X \subseteq \R^n$ is \Ls-definable if and only if  in the decomposition (\ref{eq:unique-decomposition})
the following three conditions hold:
\begin{description}
\item  {\em (FU)} the set $M$ in (\ref{eq:unique-decomposition}) is finite.
\item  {\em  (IP)}  each $\Sigma_{m}$ is $\langle \Z,+, < \rangle$-definable.
\item  {\em  (FP)}  each $\Delta_{m}$ is \Ss-definable.
\end{description}
\end{proposition}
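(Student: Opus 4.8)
The plan is to prove the two implications separately, the converse being routine and the direct one carrying the weight. Throughout I use the unique decomposition $x=\lfloor x\rfloor+\{x\}$ of a point $x\in\R^n$ into its integer part $\lfloor x\rfloor\in\Z^n$ and its fractional part $\{x\}\in[0,1)^n$ (both taken componentwise); both are \Ls-definable, since $\lfloor x_i\rfloor$ is the unique $a\in\Z$ with $a\le x_i<a+1$. With this notation, the decomposition (\ref{eq:unique-decomposition}) says that $x\in X$ holds if and only if $\lfloor x\rfloor\in\Sigma_m$ and $\{x\}\in\Delta_m$ for some $m\in M$.

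For sufficiency, assume (FU), (IP) and (FP). Each condition ``$\lfloor x\rfloor\in\Sigma_m$'' is \Ls-definable: by (IP) the set $\Sigma_m$ is defined by a $\langle\Z,+,<\rangle$-formula, and relativizing its quantifiers to the \Ls-predicate $\Z$ while substituting the \Ls-definable term $\lfloor x\rfloor$ yields an \Ls-formula. Each condition ``$\{x\}\in\Delta_m$'' is \Ls-definable since by (FP) the set $\Delta_m$ is \Ss-definable, hence \Ls-definable (the constant $1$ being \Ls-definable as the least positive integer), and $\{x\}$ is \Ls-definable. By (FU) the union is finite, so $X=\bigcup_{m\in M}\{x : \lfloor x\rfloor\in\Sigma_m\wedge\{x\}\in\Delta_m\}$ is \Ls-definable.

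For necessity, assume $X$ is \Ls-definable. The key tool is a quantifier-elimination normal form for \Ls: every \Ls-definable relation is a finite Boolean combination of atoms of the two shapes $\sum_i c_ix_i<c_0$ and $\sum_i c_ix_i\in\Z$, with all coefficients $c_i,c_0\in\Z$. I substitute $x=a+f$, $a=\lfloor x\rfloor$, $f=\{x\}$, and exploit that each $f_i\in[0,1)$, so $\sum_i c_if_i$ ranges over a bounded interval. An inequality atom becomes $\sum_i c_if_i<c_0-\sum_i c_ia_i$, whose right-hand side is an integer $N(a)$; since the left-hand side is bounded, its truth depends on $a$ only through finitely many $\langle\Z,+,<\rangle$-definable ``regimes'' for $N(a)$, and inside each regime it is an \Ss-condition on $f$. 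A congruence atom becomes $\sum_i c_if_i\in\Z$, which is independent of $a$ and, the left-hand side being bounded, reduces to a finite disjunction of equalities $\sum_i c_if_i=k$ with $k\in\Z$, again an \Ss-condition on $f$. Collecting terms puts $X$ into the separated form $x\in X\Leftrightarrow\bigvee_j\big(P_j(a)\wedge Q_j(f)\big)$, where each $P_j$ is a $\langle\Z,+,<\rangle$-formula on $a\in\Z^n$ and each $Q_j$ an \Ss-formula on $f\in[0,1)^n$.

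From the separated form the three conditions follow at once. The fractional shape of the cell at $a$ is $\{f : \bigvee_{j:\,P_j(a)}Q_j(f)\}$, which depends on $a$ only through the subset $\{j : P_j(a)\}$ of the finite index set; hence only finitely many shapes arise, giving (FU), and each is a finite union of \Ss-definable sets, giving (FP). Finally $\Sigma_m$ is the set of $a$ realizing a fixed index subset $S_m$, i.e. the $\langle\Z,+,<\rangle$-definable set $\bigwedge_{j\in S_m}P_j(a)\wedge\bigwedge_{j\notin S_m}\neg P_j(a)$, giving (IP). The main obstacle is the quantifier-elimination step for \Ls: establishing that these two atom shapes suffice, and in particular that congruences modulo arbitrary rationals are captured by applying the predicate $\Z$ to integer combinations, is where the real content lies; this is precisely the point for which one may appeal to \cite{BFL08,FV59}.
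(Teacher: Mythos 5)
Your sufficiency direction is correct, and the overall architecture of your necessity direction (split $x$ into integer and fractional parts, then exploit the boundedness of $\sum_i c_i f_i$ to reduce the dependence on $a$ to finitely many Presburger-definable ``regimes'') is the right strategy; note for comparison that the paper itself gives no proof here at all, quoting the statement from \cite[Theorem 7]{BFL08}. The genuine gap is the normal form on which your necessity direction rests: it is \emph{false} that every \Ls-definable relation is a finite Boolean combination of atoms $\sum_i c_i x_i<c_0$ and $\sum_i c_i x_i\in\Z$. A one-variable counterexample is $X=\Z+[0,\tfrac12)=\{x : \{x\}<\tfrac12\}$, which is \Ls-definable --- indeed it is exactly of the shape $\Sigma+\Delta$ that your own sufficiency argument produces, with $\Sigma=\Z$ and $\Delta=[0,\tfrac12)$. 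In one variable your atoms are half-lines and discrete periodic sets, so every Boolean combination of them is, up to modification on a discrete set, a finite union of intervals; but $\Z+[0,\tfrac12)$ has infinitely many gaps of length $\tfrac12$, so it is not of that form. (The same phenomenon appears in two variables: the \Ls-definable set $\lfloor x\rfloor=\lfloor y\rfloor$, restricted to the line $x=y+\tfrac12$, is a periodic union of segments of positive length, which no Boolean combination of your atoms can trace out on that line.) So the lemma you defer to the references is not the one they provide, and it is refuted by the very sets the proposition is about.

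The missing ingredient is that quantifier elimination for \Ls\ requires the integer-part function in the term language: the correct atoms (Weispfenning \cite{Weis99}, and the form exploited in \cite{BFL08}) are inequalities $\sum_i c_i x_i+\sum_i d_i\lfloor x_i\rfloor<c_0$ together with congruence constraints on integer combinations of the $\lfloor x_i\rfloor$ modulo fixed integers. With that corrected normal form your argument does go through: substituting $x=a+f$ turns each $\lfloor x_i\rfloor$ into $a_i$, the congruences become pure Presburger conditions on $a$, and each inequality becomes $\sum_i c_i f_i<N(a)$ with $N(a)=c_0-\sum_i(c_i+d_i)a_i$ an integer-valued affine function of $a$, to which your bounded-range regime analysis applies verbatim, yielding the separated form $\bigvee_j(P_j(a)\wedge Q_j(f))$ and hence (FU), (IP), (FP). The repair is therefore local, but as written the load-bearing step of the necessity direction is a false statement rather than a cited one.
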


\begin{proposition}\label{pr:reduc-presb}
For all $n \geq 1$ and $X \subseteq \R^n$, if every nonempty $\Tx$-definable relation contains a point with rational components then there exists a $\Tx$-sentence $\Gamma_n$ (uniform in $X$) which holds if and only if $X$ is \Ls-definable
\end{proposition}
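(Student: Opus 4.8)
The plan is to express the three conditions of Proposition \ref{prop-decomp} as a $\Tx$-sentence. The overall strategy is to fix the finite bound provided by condition (FU) and then express (IP) and (FP) for each of the finitely many pieces, invoking Muchnik's theorem for the integer parts and Theorem \ref{th:seldef} for the fractional parts. First I would observe that the decomposition (\ref{eq:unique-decomposition}) is canonically associated with $X$, so the sets $\Sigma_m$ and $\Delta_m$ are all $\Tx$-definable (uniformly), since membership $a\in\Z^n$ and the translation-by-$a$ operation are available in $\Tx$, and the predicate ``$y$ lies in the unit hypercube rooted at the integer point $a$'' is $\langle\R,+,<,\Z\rangle$-definable. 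In particular, for a parameter $a \in \Z^n$, both the ``integer index set'' $\Sigma_{m(a)}$ and the ``fractional pattern'' $\Delta_{m(a)} \subseteq [0,1)^n$ attached to the hypercube at $a$ are recovered by $\Tx$-formulas with free parameter $a$.

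Next I would handle condition (FU), the finiteness of $M$. Two integer points $a,b$ give the same pattern exactly when $\Delta_{m(a)}=\Delta_{m(b)}$, which by the above is a $\Tx$-definable equivalence relation on $\Z^n$; the finiteness of the number of classes is then a $\Tx$-expressible statement (for instance: there is a finite bound $N$ and integer representatives $a^{(1)},\dots,a^{(N)}$ such that every $a\in\Z^n$ has its pattern equal to one of the representatives' patterns --- finiteness of a $\Tx$-definable quotient can be phrased directly, analogously to the finiteness sentence $FS$ of Lemma \ref{le:defquasi}). Conditions (IP) and (FP) are then stated relative to each representative. For (IP), each $\Sigma_m\subseteq\Z^n$ is $\Tx$-definable, and since $\langle\Z,+,<\rangle$ is interpretable in $\Tx$, Muchnik's theorem supplies a sentence, uniform in the defining predicate, expressing that a $\langle\Z,+,<,\Sigma_m\rangle$-definable set is $\langle\Z,+,<\rangle$-definable; relativizing this to $\Tx$ gives a $\Tx$-sentence asserting (IP) for $\Sigma_m$. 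For (FP), each $\Delta_m$ is a $\Tx$-definable subset of $[0,1)^n$, hence in particular $\Sx$-definable, so Theorem \ref{th:seldef} yields a $\Tx$-sentence $\Phi_n$ (relativized to the pattern predicate $\Delta_m$) expressing that $\Delta_m$ is \Ss-definable.

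The hypothesis that every nonempty $\Tx$-definable relation contains a rational point is needed in two places, matching its role in Theorem \ref{th:seldef}: it guarantees that the self-definability criteria for both the integer and real components are applicable, and more specifically it ensures that the quasi-singular-point finiteness test behind $\Phi_n$ correctly detects \Ss-definability of the $\Delta_m$. The desired sentence $\Gamma_n$ is then the conjunction of the finiteness sentence for (FU) together with, quantified over the finitely many representative parameters $a$, the Muchnik sentence witnessing (IP) for $\Sigma_{m(a)}$ and the relativized $\Phi_n$ witnessing (FP) for $\Delta_{m(a)}$. By Proposition \ref{prop-decomp}, $\Gamma_n$ holds in $\Tx$ if and only if all three conditions hold, which is if and only if $X$ is \Ls-definable.

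I expect the main obstacle to be the careful relativization of Muchnik's theorem inside $\Tx$, rather than inside $\langle\Z,+,<\rangle$ directly. Muchnik's result is a self-definable criterion in the signature $\langle\Z,+,<,Y\rangle$ for a single integer predicate $Y$, whereas here the predicate $\Sigma_{m(a)}$ varies with the parameter $a$ and is presented only through a $\Tx$-formula. The delicate point is to show that Muchnik's criterion can be applied \emph{uniformly} --- i.e., that the bound and the shape of its sentence do not depend on which $\Sigma_m$ one plugs in, so that the whole family can be folded under the finitely many parameters $a$ and a single outer quantifier block. Subsidiary care is also required to check that the interpretation of $\langle\Z,+,<\rangle$ in $\Tx$ (via the $\Z$-predicate) transports Muchnik's sentence faithfully, and that the $\Sx$-sentence $\Phi_n$ of Theorem \ref{th:seldef} remains correct when its relation argument ranges over the $\Tx$-definable fractional patterns rather than over a free predicate symbol.
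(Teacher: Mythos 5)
Your proposal follows essentially the same route as the paper: express the three conditions (FU), (IP), (FP) of Proposition \ref{prop-decomp} in $\Tx$, using the $\approx$-equivalence and a bound $N$ for (FU), the uniform Muchnik sentence relativized to $\Z$ for (IP), and the sentence $\Phi_n$ of Theorem \ref{th:seldef} applied to each unit-hypercube pattern for (FP). The only cosmetic difference is that the paper quantifies the (IP) and (FP) tests universally over all integer points rather than over chosen representatives, which sidesteps the need to name representatives inside the sentence; your worry about the uniformity of Muchnik's criterion is already resolved by the statement of \cite[Thm 1]{Muchnik03}, which provides a single formula uniform in the predicate.
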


\begin{proof}
In view of Proposition \ref{prop-decomp}  it suffices to show that the three conditions are expressible in
\str{\R,+,<,\Z,X}.
Let $\Phi(X)$ be the $\Tx$-formula which states that $X$ is  \Ss-definable, see equation (\ref{eq:ouf}).

\nl Condition (FU): Let   $x \approx y$ denote  the equivalence relation which says that the two points $x$ and $y$  belong to the same $\Sigma_{m}$ in the decomposition \ref{eq:unique-decomposition}.
It is expressed by the  $\Tx$-formula
\begin{equation}
\label{eq:approx}
x \in \Z^{n} \wedge y\in \Z^{n} \wedge  \forall  z \in [0,1)^n\ (x+z \in X \leftrightarrow y+z \in X)
\end{equation}
 The finiteness of the number of classes   is expressed  by the $\Tx$-formula
$$
\exists N \ \forall  x \in \Z^{n} \  \exists y \in \Z^{n} \ ( |y| < N \wedge y\approx x)
$$
\nl Condition (IP):  By \cite[Thm 1]{Muchnik03} for every $Y \subseteq \Z^n$ there exists a \str{\Z,+,<,Y}-formula $\Psi(Y)$ (uniform in $Y$) which holds if and only if $Y$ is \str{\Z,+,<}-definable. Let $\Psi^*(Y)$ denote the \str{\R,\Z,+,<,Y}-formula obtained from $\Psi(Y)$ by relativizing all quantifiers to $\Z$. Given $Y \subseteq \Z^n$ (seen as a subset of $\R^n$), the formula $\Psi^*(Y)$ holds in \str{\R,\Z,+,<,Y} if and only if $Y$ is \str{\Z,+,<}-definable. Thus we can express  in $\Tx$  the fact that all $\approx$-equivalence classes are \str{\Z,+,<}-definable with the formula
$$
\forall  x\in \Z^{n}  \Psi^*((y\in \Z^{n} \wedge y\approx x))
$$
\nl Condition (FP):   The fact that every hypercube of $X$   of unit side is \Ss-definable is expressed by\vspace*{-1.8mm}
$$
\forall x_{1}, \ldots x_{n}  \in \Z^{n} \  \Phi((0\leq y_{1}< 1\wedge \cdots \wedge 0\leq y_{n}< 1 \wedge (x_{1}+ y_{1}, \cdots, x_{n}+y_{n}) \in X)).
$$

\vspace*{-8mm}
\end{proof}

\section{Application to decidability}
\label{sec:applications}

\subsection{Deciding \Ss-definability and \Ls-definability}
\label{subsec:decid-general}

Theorem \ref{th:seldef} and Proposition  \ref{pr:reduc-presb} prove the existence of definable criteria for \Ss-definability and \Ls-definability for a given relation $X$, respectively. If $X$ is definable in some decidable expansion of \Ss\ (resp. \Ls)  then we can obtain effective criteria. This can be formulated as follows.

\begin{theorem}\label{th:eff1}
	Let $\+M$ be any decidable expansion of \Ss\  such that every nonempty  $\+M$-definable relation contains a point with rational components. Then it is decidable whether a  $\+M$-definable relation $X \subseteq \R^n$ is \Ss-definable.
\end{theorem}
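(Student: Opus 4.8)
The plan is to reduce Theorem \ref{th:eff1} to the self-definability result of Theorem \ref{th:seldef}. The key observation is that Theorem \ref{th:seldef} already produces, for each arity $n$, a single \emph{uniform} $\Sx$-sentence $\Phi_n$ (where $\Sx$ denotes \SXs{X}) that holds in $\Sx$ if and only if $X$ is \Ss-definable, under the standing hypothesis that every nonempty $\Sx$-definable relation contains a point with rational components. So the whole task is to convert the \emph{semantic} criterion ``$\Sx \models \Phi_n$'' into an \emph{effective} decision procedure, and for that I only need a decision procedure for the theory in which $\Phi_n$ is written.

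The first step is to verify that the hypothesis of Theorem \ref{th:seldef} is met. Given $X$ which is $\+M$-definable, I would note that any $\Sx$-definable relation is in particular $\+M$-definable (since $\+M$ expands \Ss\ and $X$ is among the relations $\+M$ can define, every formula of $\Sx=\SXs{X}$ can be rewritten as an $\+M$-formula by substituting the $\+M$-definition of $X$ for the predicate symbol $X$). By assumption on $\+M$, every nonempty $\+M$-definable relation contains a point with rational components, hence so does every nonempty $\Sx$-definable relation. This is exactly the hypothesis needed to invoke Theorem \ref{th:seldef}.

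Next, I would carry out the reduction of the sentence $\Phi_n$ from the signature of $\Sx$ to the signature of $\+M$. The sentence $\Phi_n$ from equation (\ref{eq:ouf}) uses the predicate symbol $X$ together with the symbols of \Ss. Let $\delta(x_1,\dots,x_n)$ be a fixed $\+M$-formula defining $X$; such a formula exists and can be found effectively because $X$ is given as an $\+M$-definable relation. Replacing each atomic occurrence of the form $\,t \in X\,$ in $\Phi_n$ by $\delta(t)$ yields an $\+M$-sentence $\Phi_n^{\+M}$ which holds in $\+M$ if and only if $\Phi_n$ holds in $\Sx$, i.e.\ if and only if $X$ is \Ss-definable. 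This substitution is a purely syntactic and effective transformation. Finally, since $\+M$ is decidable, I can decide whether $\+M \models \Phi_n^{\+M}$, and this answer is precisely the answer to whether $X$ is \Ss-definable.

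I do not expect a genuine obstacle here: the heavy lifting has already been done in establishing that $\Phi_n$ is uniform in $X$ and definable inside $\Sx$. The only points requiring care are bookkeeping ones, namely (i) checking that $\Sx$-definability implies $\+M$-definability so the rational-point hypothesis transfers, and (ii) confirming that the syntactic substitution of $\delta$ for the predicate $X$ is legitimate and preserves the meaning of $\Phi_n$, so that satisfaction in $\Sx$ matches satisfaction of $\Phi_n^{\+M}$ in $\+M$. Both are routine once one is careful that $\+M$ is an expansion of \Ss\ and that $\Phi_n$ uses $X$ only through atomic membership statements.
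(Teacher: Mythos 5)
Your proof is correct and follows essentially the same route as the paper: substitute the $\+M$-definition of $X$ into the uniform sentence $\Phi_n$ of Theorem \ref{th:seldef} and decide the resulting $\+M$-sentence using the decidability of $\+M$. Your explicit check that the rational-point hypothesis transfers from $\+M$ to $\Sx$ is a detail the paper leaves implicit, but the argument is the same.
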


\begin{proof}
	Assume that $X$ is $\+M$-definable by the formula $\psi(x)$. In Equation (\ref{eq:ouf}), if we substitute $\psi(x)$ for every occurrence
	of $x\in X$
	then we obtain a  $\+M$-sentence  which holds if and only if $X$
	is \Ss-definable, and the result follows from the decidability of $\+M$.
\end{proof}

\begin{theorem}\label{th:eff2}
	Let $\+N$ be any decidable expansion of \Ls\ such that every nonempty  $\+N$-definable relation contains a point with rational components. Then it is decidable whether a  $\+N$-definable relation $X \subseteq \R^n$ is \Ls-definable (resp. whether a  $\+N$-definable relation $X \subseteq \R^n$ is \Ss-definable).
\end{theorem}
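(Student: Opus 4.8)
The plan is to reduce the statement to the self-definable criteria already established, exactly as Theorem \ref{th:eff1} reduces \Ss-definability to Theorem \ref{th:seldef}. The two ingredients I would invoke are Proposition \ref{pr:reduc-presb} for the \Ls-definability claim and Theorem \ref{th:seldef} for the \Ss-definability claim; in both cases the essential feature is that these results produce a \emph{uniform} sentence (in $\Tx$, resp.\ $\Sx$) expressing definability, which can be turned into an $\+N$-sentence once the predicate $X$ is replaced by its defining formula.

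First I would fix a formula $\psi$ defining $X$ in $\+N$ and treat the \Ls-definability claim. Since $\+N$ expands \Ls, the only symbol of $\Tx$ absent from $\+N$ is $X$; substituting $\psi$ for each occurrence of $X$ therefore turns any $\Tx$-formula into an $\+N$-formula, so that every $\Tx$-definable relation is $\+N$-definable. This is the step I would single out as the crux of the argument: it transfers the hypothesis on $\+N$ to $\Tx$, so that every nonempty $\Tx$-definable relation contains a point with rational components, which is exactly the condition required by Proposition \ref{pr:reduc-presb}. That proposition then yields a $\Tx$-sentence $\Gamma_n$ holding if and only if $X$ is \Ls-definable, and substituting $\psi$ for $X$ in $\Gamma_n$ produces an $\+N$-sentence with the same meaning. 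The decidability of $\+N$ then settles the claim for \Ls-definability.

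For the \Ss-definability claim I would argue identically, with $\Sx$ in place of $\Tx$ and Theorem \ref{th:seldef} in place of Proposition \ref{pr:reduc-presb}. The only additional remark needed is that the constant $1$ is \Ls-definable (as the least positive integer), so $\+N$ is also an expansion of \Ss; hence every $\Sx$-definable relation is again $\+N$-definable and the rational-point hypothesis transfers to $\Sx$. Theorem \ref{th:seldef} then supplies a $\Sx$-sentence $\Phi_n$ holding if and only if $X$ is \Ss-definable, which I convert into an $\+N$-sentence by substitution (replacing $1$ by its \Ls-definition and $X$ by $\psi$) and evaluate using decidability. I do not expect a genuine obstacle beyond the bookkeeping of this transfer step, since the substantive work was already done in Proposition \ref{pr:reduc-presb} and Theorem \ref{th:seldef}; the one point meriting care is that the rational-point hypothesis must be checked for the auxiliary structures $\Tx$ and $\Sx$ rather than merely for $\+N$, and the substitution argument is precisely what bridges that gap.
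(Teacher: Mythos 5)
Your proposal is correct and follows essentially the same route as the paper: substitute the defining formula $\psi$ for $X$ in the sentence $\Gamma_n$ of Proposition \ref{pr:reduc-presb} (resp.\ $\Phi_n$ of Theorem \ref{th:seldef}) and appeal to the decidability of $\+N$, with the \Ss-case handled exactly as in Theorem \ref{th:eff1}. Your explicit remark that the rational-point hypothesis transfers from $\+N$ to $\Tx$ and $\Sx$ because every relation definable in those structures is $\+N$-definable is a point the paper leaves implicit, but it is the same argument.
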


\begin{proof}
	The  claim about \Ss-definability follows immediately from the fact that $\+N$ satisfies the conditions of Theorem \ref{th:eff1}. For \Ls-definability, we use the same idea as for the proof of Theorem \ref{th:eff1}, but instead of $\Phi_n$ we use the sentence $\Gamma_n$ of Proposition \ref{pr:reduc-presb}.	
\end{proof}

\subsection{Application to recognizable numerical relations}

We finally apply the results of Section \ref{subsec:decid-general} to the class of $k$-recognizable relations on reals.

Let us recall that given an  integer {base} $k \geq 2$ and a non-negative real $x$, a {\em $k$-encoding} of $x$ is any right  infinite word on the alphabet
$\Sigma_k= \{0, \ldots, k-1\}\cup \{\star\}$ of the form $w=a_{p}\ldots a_{1}\star  a_{0} a_{-1} a_{-2 } \ldots$ such that $a_i \in \{0, \ldots, k-1\}$ for every $i \leq p$ and $x= \sum_{i \leq p} a_i k^i$. The definition extends to the case of negative reals $x$ by using the $k$'s complement representation method where the leftmost digit equals $k-1$: a {$k-$encoding} of $x$ is a right  infinite word on the alphabet $\Sigma_k$ of the form $w=a_{p}\ldots a_{1}\star  a_{0} a_{-1} a_{-2 } \ldots$ where $a_p=k-1$ and $x=-k^p +  \sum_{i \leq p-1} a_i k^i$. Note that every real has infinitely many $k$-encodings.

In order for an automaton to be able to process $n$-tuples of representations in base $k$ of reals we
prefix it, if necessary,  with as few occurrences of  $0$ for the nonnegative components or $k-1$ to the negative components
so that the $n$ components have the same length to the left of the symbol $\star$. This does  not change the numerical values represented.
By identifying a real with its $k$-encodings, relations of arity $n$ on $\R$ can thus be viewed  as subsets of $n$-tuples
of sequences
on $\{0, \ldots, k-1\}\cup \{\star\}$, i.e., as  subsets of
$$
(\{0, \ldots, k-1\}^{n})^{*} \{\overbrace{(\star, \ldots, \star)}^{n \text{\tiny times}}\} (\{0, \ldots, k-1\}^{n})^{\omega}
$$

\begin{definition}
	\label{de:recognizable}
	A relation $X\subseteq \R^{n}$ is $k$-\emph{recognizable}
	if the set of $k$-encodings of its elements
	is recognized by some deterministic Muller-automaton.
\end{definition}

The collection of  recognizable relations has a natural logical characterization.

\begin{theorem}\cite[Thm 5 and 6]{BRW1998}
	\label{th:brw98}
	Let $k \geq 2$ be an integer. A subset of $\R^{n}$ is $k$-recognizable if and only if it is definable in
	$\langle \R,\Z,+,<,X_k\rangle$ where $X_k \subseteq \R^3$ is such that $X_k(x,y,z)$ holds if and only if $y$ is a power of $k$ and $z$ is the coefficient of
	$y$ in some $k-$encoding of $x$.
	
	Consequently, since the emptiness problem for recognizable relations is decidable, the theory of $\langle \R,\Z,+,<,X_k \rangle$ is decidable.
\end{theorem}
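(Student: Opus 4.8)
The plan is to read Theorem~\ref{th:brw98} as a B\"uchi-type equivalence between automata over infinite words and a logic, transported from integers to reals, and to prove it by setting up a dictionary between the structure $\langle \R,\Z,+,<,X_k\rangle$ and monadic second-order logic over infinite words. The governing idea is that a real variable, read through its base-$k$ expansion, \emph{is} an $\omega$-word over $\{0,\dots,k-1\}$ (aligned at the symbol $\star$), and that quantifying existentially over a real whose digits lie in $\{0,1\}$ simulates existential quantification over a set of digit positions. Thus first-order quantification over reals in the presence of $X_k$ has exactly the strength of monadic second-order quantification over positions, and the heart of the proof is to turn this slogan into two effective translations, one in each direction, after which the classical automata-theoretic facts about $\omega$-regular languages close the argument.

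For the direction from definability to recognizability I would argue by induction on the structure of a formula. First I would check that each atomic relation is $k$-recognizable: $<$ and $X_k$ are immediate from the digitwise reading; the predicate ``$x\in\Z$'' holds exactly when all digits right of $\star$ are $0$ (equivalently, by the alternative encoding, all equal to $k-1$), and is recognized by a small automaton; and $x+y=z$ is recognized by an automaton tracking a bounded carry, the only complication being that, since digits are read most-significant-first on an infinite fractional part, the carry must be guessed and verified rather than computed right-to-left. Closure under the Boolean connectives is routine for deterministic Muller automata (complement by toggling the accepting family, union and intersection by a product construction), and closure under existential quantification is projection onto fewer tracks followed by \textbf{determinization}: the projected automaton is nondeterministic, and McNaughton's theorem restores a deterministic Muller automaton. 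Throughout one must ensure the recognized language is \emph{saturated}, i.e.\ closed under replacing an encoding of a real by any other encoding of the same real and under padding integer parts to a common length; checking that each construction preserves saturation is the bookkeeping cost of working with reals rather than integers.

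The reverse direction, from recognizability to definability, is the main obstacle. Given a deterministic Muller automaton $\mathcal A$ for $X$, its language is $\omega$-regular, hence definable in monadic second-order logic over $(\N,<)$ by B\"uchi's theorem; what remains is to rewrite that sentence as a formula of $\langle \R,\Z,+,<,X_k\rangle$. Here I would encode a run of $\mathcal A$ on the input tuple as auxiliary reals whose digits record the current state, existentially quantify over these reals, and express with $X_k$ that the first state is initial and that each transition is consistent with the input digits at the corresponding power of $k$. The Muller acceptance condition ``the set of states visited infinitely often belongs to the accepting family'' is expressed by the usual $\forall\,\exists$ pattern, ``for every power of $k$ there is a smaller one at which state $q$ occurs'', which $<$ together with $X_k$ makes available. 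Making the digit-position arithmetic line up exactly with multiplication by $k$, and handling the finitely many exceptional encodings near the radix point, is the delicate part of this translation.

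Finally, decidability follows because both translations are \emph{effective}. Given any sentence of $\langle \R,\Z,+,<,X_k\rangle$, the forward translation produces, by an explicit procedure, a Muller automaton for the relation defined by its matrix, and the leading quantifier block is discharged by a bounded sequence of projections, complementations, and determinizations, so that truth of the sentence reduces to nonemptiness of a computable Muller automaton. Since emptiness for Muller automata is decidable (test for a reachable loop whose visited state set is accepting), the whole theory is decidable, which is exactly the ``Consequently'' clause of the statement.
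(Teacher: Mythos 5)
The paper does not prove this statement at all: it is quoted with a citation to Boigelot--Rassart--Wolper \cite{BRW1998} (their Theorems 5 and 6), so there is no in-paper proof to compare against. Your sketch is a faithful reconstruction of the standard argument from that reference --- structural induction with automata constructions (product, complementation of deterministic Muller automata by toggling the accepting family, projection followed by McNaughton determinization) for the definability-to-recognizability direction, encoding of runs via digit access through $X_k$ for the converse, and reduction of truth to Muller emptiness for decidability --- and it correctly flags the two genuine subtleties, namely carry guessing for most-significant-digit-first addition and preservation of saturation under the multiple $k$-encodings of a real.
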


Moreover the class of \Ls-definable relations enjoys the following characterization.

\begin{theorem}\cite{BB2009,BBB2010,BBL09}
	\label{th:all-bases}
	A subset of $\R^{n}$ is \Ls-definable if and only if it is $k$-recognizable for every integer $k \geq 2$.
\end{theorem}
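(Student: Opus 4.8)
The plan is to prove the two implications separately; the forward one is immediate and essentially all the work lies in the converse. For the ``only if'' direction I would invoke Theorem~\ref{th:brw98}: the structure \Ls\ is precisely the reduct of \LXs{X_k} obtained by dropping the predicate $X_k$, so every \Ls-formula is in particular a \LXs{X_k}-formula. Hence any \Ls-definable relation is definable in \LXs{X_k}, and therefore $k$-recognizable; since $k\geq 2$ was arbitrary, an \Ls-definable relation is $k$-recognizable in every base.

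For the converse, assume $X\subseteq\R^n$ is $k$-recognizable for every $k\geq 2$. I would use the unique decomposition~(\ref{eq:unique-decomposition}), $X=\bigcup_{m\in M}(\Sigma_m+\Delta_m)$ with $\Sigma_m\subseteq\Z^n$ and $\Delta_m\subseteq[0,1)^n$, and reduce via Proposition~\ref{prop-decomp} to establishing conditions (FU), (IP) and (FP). Condition (FU) follows from a single base: fixing a deterministic Muller automaton $\+A$ recognizing $X$ in base $2$, the restriction of $X$ to a hypercube $a+[0,1)^n$ (with $a\in\Z^n$) is determined by the state $\+A$ reaches after reading the integer prefix encoding $a$ up to the symbol $\star$; as $\+A$ has finitely many states, only finitely many patterns $\Delta_m$ arise, so $M$ is finite. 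For (IP), I would note that each $\Sigma_m$, consisting of those $a\in\Z^n$ whose associated hypercube pattern is $\Delta_m$, is in the classical integer sense $k$-recognizable for every $k$, since membership is a regular condition on the base-$k$ digits of $a$. Choosing two multiplicatively independent bases, the classical Cobham--Semenov theorem then forces each $\Sigma_m$ to be \str{\Z,+,<}-definable.

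The real obstacle is condition (FP): each fractional pattern $\Delta_m\subseteq[0,1)^n$ is $k$-recognizable for every $k$, and I must show it is \Ss-definable. I would attack this by verifying the hypotheses of Theorem~\ref{th:crit-n} for $\Delta_m$, inducting on $n$. Condition (DS) propagates downward cleanly: a rational section of a $k$-recognizable set is again $k$-recognizable in every base and of lower dimension, hence \Ls-definable by the induction hypothesis, and being bounded inside a face of the cube it is in fact \Ss-definable (a bounded \Ls-definable set has, in its decomposition, only finite --- hence \Ss-definable --- integer parts). The genuinely hard content is (FSP) and (RSP), namely that $\Delta_m$ has finitely many singular points and that these have rational coordinates. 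Here the leverage is base-independence: a distinguished point isolated by the automaton has an ultimately periodic expansion, which makes it rational, while finiteness must be extracted from the impossibility of an infinite base-dependent family surviving recognition in two multiplicatively independent bases. This last step is the heart of the real analogue of Cobham's theorem, and is where multiplicative independence is indispensable; I expect it to be by far the most delicate part, the integer case (IP) being a mere repackaging of the classical result whereas (FP) demands the new geometric analysis of recognizable subsets of the unit cube.
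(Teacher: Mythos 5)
First, a point of context: the paper does not prove this theorem at all --- it is quoted verbatim from \cite{BB2009,BBB2010,BBL09} (the real-number analogue of the Cobham--Semenov theorem) and used as a black box to justify that ``recognizable in every base'' is equivalent to \Ls-definability. So there is no in-paper proof to compare yours against; your proposal has to be judged against the literature it would be reconstructing.

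Your forward direction and your overall reduction are sound: \Ls\ is a reduct of $\langle \R,\Z,+,<,X_k\rangle$, so Theorem \ref{th:brw98} gives $k$-recognizability for every $k$; and conversely the decomposition of Proposition \ref{prop-decomp} correctly splits the problem into (FU), (IP), (FP). Your arguments for (FU) (finitely many residual behaviours of a single deterministic automaton after the integer prefix) and for (IP) (each $\Sigma_m$ is, base-independently, a $k$-recognizable subset of $\Z^n$ for every $k$, hence Presburger by Cobham--Semenov) are essentially right, modulo routine care about the two encodings of dyadic-type reals. The genuine gap is exactly where you locate it, but locating it is not closing it: for (FP) you reduce to showing that each $\Delta_m\subseteq[0,1)^n$, being $k$-recognizable for all $k$, satisfies (FSP) and (RSP) of Theorem \ref{th:crit-n}, and then you write that finiteness of the singular set ``must be extracted from the impossibility of an infinite base-dependent family surviving recognition in two multiplicatively independent bases'' and that you ``expect it to be by far the most delicate part.'' That step \emph{is} the theorem: a set like the base-$3$ Cantor set is $3$-recognizable and every one of its points is singular, so finiteness genuinely requires playing two bases against each other, and no mechanism for doing so is proposed (even the rationality claim for singular points is only asserted via ``ultimately periodic expansion'' without explaining why a singular point must carry one --- though here the paper's own machinery would help, since the singular set is $\langle\R,\Z,+,<,X_k\rangle$-definable and, once known finite, each singleton is a nonempty recognizable set hence rational as in Theorem \ref{th:rec-z-r}). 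As it stands the proposal is a correct reduction of the statement to its own hardest case, not a proof.
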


As a consequence, deciding whether a $k$-recognizable relation is $l-$recognizable for every base $l \geq 2$ amounts to decide whether it is \Ls-definable. We can prove the following result.

\begin{theorem}\label{th:rec-z-r}
	Given an integer $k \geq 2$, it is decidable whether a $k$-recognizable relation $X \subseteq \R^n$   is \Ls-definable (resp. whether a $k-$recognizable relation $X \subseteq \R^n$   is \Ss-definable).
\end{theorem}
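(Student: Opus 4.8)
The plan is to obtain the result as a direct instance of Theorem \ref{th:eff2}, applied to the structure $\+N = \langle \R, \Z, +, <, X_k \rangle$. By Theorem \ref{th:brw98} the $k$-recognizable relations are precisely the $\+N$-definable relations, and that same theorem guarantees that $\+N$ is decidable; since $\+N$ visibly expands \Ls, the only hypothesis of Theorem \ref{th:eff2} left to check is that every nonempty $\+N$-definable relation contains a point with rational components. Granting this, Theorem \ref{th:eff2} immediately supplies algorithms deciding both \Ls-definability and \Ss-definability of an arbitrary $\+N$-definable (equivalently, $k$-recognizable) relation $X \subseteq \R^n$, which is exactly the assertion to be proved.

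First I would verify the rational-point property for $k$-recognizable relations. By Definition \ref{de:recognizable} a nonempty such relation is recognized, viewed as a set of $n$-tuples of $k$-encodings over the product alphabet, by a Muller automaton accepting a nonempty $\omega$-language. A nonempty $\omega$-regular language always contains an ultimately periodic word $u v^\omega$, and an ultimately periodic $k$-encoding has, in each of its $n$ components, an eventually periodic sequence of digits to the right of the symbol $\star$; such a digit sequence represents a rational number. Decoding componentwise therefore yields a tuple lying in $\Q^n \cap X$. This is the only step carrying genuine content, and it is precisely where the Muller-automaton presentation (rather than the purely logical one) is used; the subtlety is to run the ultimate-periodicity argument at the level of the product alphabet, so that all $n$ components become rational \emph{simultaneously}.

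Finally I would assemble the pieces: $\+N = \langle \R, \Z, +, <, X_k \rangle$ is a decidable expansion of \Ls\ in which every nonempty definable relation meets $\Q^n$, so Theorem \ref{th:eff2} applies verbatim and settles both the \Ls\ and the \Ss\ cases. I would then record the motivating consequence through Theorem \ref{th:all-bases}: since a relation is \Ls-definable if and only if it is $\ell$-recognizable for every integer $\ell \geq 2$, the \Ls\ half of the statement also decides, for a given $k$-recognizable relation, whether it is $\ell$-recognizable in all bases $\ell \geq 2$. I do not expect any serious obstacle beyond the rational-point verification, the remainder being pure bookkeeping that invokes results already established.
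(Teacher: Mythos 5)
Your proposal is correct and follows exactly the paper's own argument: reduce to Theorem \ref{th:eff2} via Theorem \ref{th:brw98}, and verify the rational-point hypothesis by extracting an ultimately periodic word from the nonempty Muller-recognized language of $k$-encodings. Your added remark about working over the product alphabet so that all $n$ components become rational simultaneously is a useful (if implicit in the paper) clarification, but it does not change the route.
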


\begin{proof} By Theorems \ref{th:eff2} and \ref{th:brw98} it suffices to prove that every nonempty $k$-recognizable relation $Y \subseteq  \R^n$ contains an element in $\Q^{n}$. By our assumption the set of $k$-encodings of elements of $Y$ is nonempty and is recognized by a finite Muller automaton, thus it contains an ultimately periodic $\omega-$word, which is the $k-$encoding of some element of $\Q^n$.
\end{proof}

\subsection*{Acknowledgment}

We wish to thank the anonymous referees and Fran\c{c}oise Point for their useful comments and suggestions.

\end{document}